\documentclass[a4paper,10pt]{amsart}
\usepackage{amssymb,epsfig}
\usepackage{latexsym}
\usepackage{amsthm}
\usepackage{amsmath}
\usepackage{amscd}
\input xy
\xyoption{all}
\newtheorem{oss}{Remark}
\newtheorem{esempio}{Example}
\newtheorem{deff}{Definition}
\newtheorem{prop}{Proposition}
\newtheorem{teo}{Theorem}
\newtheorem{cor}{Corollary}
\newtheorem{lem}{Lemma}
\newcommand{\cc}{\mathbb{C}}
\newcommand{\pp}{\mathbb{P}}
\newcommand{\nn}{\mathbb{N}}
\newcommand{\qq}{\mathbb{Q}}

\newcommand{\efascio}{\mathcal{E}}
\newcommand{\efil}{E^{\bullet}}
\newcommand{\alphafil}{\underline{\alpha}}
\newcommand{\filtrazione}{\efil,\alphafil}
\newcommand{\muno}{\mu_{\rho_{a_1,b_1}}(\filtrazione;\varphi_1)}
\newcommand{\mudu}{\mu_{\rho_{a_2,b_2}}(\filtrazione;\varphi_2)}
\newcommand{\munoc}{\mu_{\rho_{a_1,b_1,c_1}}(\filtrazione;\varphi_1)}
\newcommand{\muduc}{\mu_{\rho_{a_2,b_2,c_2}}(\filtrazione;\varphi_2)}
\newcommand{\mui}{\mu_{\rho_{a_i,b_i}}(\filtrazione;\varphi_i)}
\newcommand{\muic}{\mu_{\rho_{a_i,b_i,c_i}}(\filtrazione;\varphi_i)}
\newcommand{\mudus}{\mu_{\rho'}(\filtrazione;[\phitilde,\omega])}
\newcommand{\isom}{\textit{Isom}}
\newcommand{\home}{\textit{Hom}}
\newcommand{\symstar}{\text{Sym}^{\star}}
\newcommand{\rk}[1]{\text{rk}({#1})}
\newcommand{\zaza}[1]{\text{End}({#1})}
\newcommand{\omegax}{\Omega^1_X}
\newcommand{\omegaxt}{\Omega^1_{\widetilde{X}}}
\newcommand{\ox}{\mathcal{O}_X}
\newcommand{\oxtilde}{\mathcal{O}_{\widetilde{X}}}
\newcommand{\xtilde}{\widetilde{X}}
\newcommand{\tautilde}{\widetilde{\tau}}
\newcommand{\phitilde}{\widetilde{\phi}}

\newcommand{\rest}[2]{{#1}_{\mid_{#2}}}
\newcommand{\degparalpha}{\alpha\text{-deg}_{\text{\tiny{par}}}}
\newcommand{\degpar}{\text{deg}_{\text{\tiny{par}}}}
\newcommand{\mupar}{\mu_{\text{\tiny{par}}}}
\newcommand{\ab}[1]{_{a_{#1},b_{#1}}}
\newcommand{\abc}[1]{_{a_{#1},b_{#1},c_{#1}}}
\newcommand{\ddgpb}{$2$\text{-dgpb }}
\newcommand{\dgpb}{\text{dgpb }}
\newcommand{\moduli}{\mathfrak{\underline{M}}(\rho)^{(\delta_1,\delta_2)\mbox{-}(s)s}}
\newcommand{\modulis}{\mathfrak{\underline{M}}(\rho)^{(\delta_1,\delta_2)\mbox{-}s}}
\newcommand{\moduliss}{\mathfrak{\underline{M}}(\rho)^{(\delta_1,\delta_2)\mbox{-}ss}}
\newcommand{\modulispaces}{\mathcal{M}(\rho)^{(\delta_1,\delta_2)\mbox{-}s}}
\newcommand{\modulispacess}{\mathcal{M}(\rho)^{(\delta_1,\delta_2)\mbox{-}ss}}

\newcommand{\unomodulispaces}{\mathcal{M}(\rho)^{(\delta)\mbox{-}s}}
\newcommand{\unomodulispacess}{\mathcal{M}(\rho)^{(\delta)\mbox{-}ss}}
\newcommand{\singprincipalmoduliss}{\underline{\mathfrak{M}}(\rho)^{ss}_{r}}
\newcommand{\singprincipalmodulis}{\underline{\mathfrak{M}}(\rho)^{s}_{r}}
\newcommand{\singprincipalmodulispacess}{\mathcal{M}(\rho)^{ss}_{r}}
\newcommand{\singprincipalmodulispaces}{\mathcal{M}(\rho)^{s}_{r}}
\newcommand{\unomodulispacessfr}{\mathcal{M}(\rho)^{(\textbf{\tiny{fr-}}\delta)\mbox{-}ss}}
\newcommand{\unomodulispacesfr}{\mathcal{M}(\rho)^{(\textbf{\tiny{fr-}}\delta)\mbox{-}s}}
\newcommand{\unomodulisfr}{\mathfrak{\underline{M}}(\rho)^{(\textbf{\tiny{fr-}}\delta)\mbox{-}s}}
\newcommand{\unomodulissfr}{\mathfrak{\underline{M}}(\rho)^{(\textbf{\tiny{fr-}}\delta)\mbox{-}ss}}
%


%
\newcommand{\muphipar}[2]{\mu_{\text{\tiny{par}}}({#1},{#2})}
\newcommand{\gr}{\text{gr}}

\newcommand{\ssfam}{\mathfrak{F}^{\delta\text{-ss}}_{d,r,a,b,L}}
\newcommand{\quoz}{\mathfrak{h}}
\newcommand{\quot}{\mathfrak{H}}
\newcommand{\gras}{\textbf{Gr}}
\newcommand{\poly}[1]{P_{({#1},\rest{q}{{#1}},\rest{\varphi}{{#1}})}}
\newcommand{\polygen}[3]{P_{({#1},{#2},{#3})}}
\newcommand{\polyf}{P_{(E,q,\varphi)}}
\newcommand{\qdim}[1]{\chi({#1})}
\newcommand{\tipo}{\underline{\mathfrak{t}}}
\newcommand{\kk}[2]{\texttt{k}_{\tiny{{#1}{#2}}}}
\newcommand{\fr}{\text{\tiny{fr}}}
\title[Principal Higgs bundles over singular curve]{A compactification of the moduli space of principal Higgs bundles over singular curves}
\author{Alessio Lo Giudice}
\address{\scriptsize Alessio Lo giudice:
Scuola Internazionale Superiore di Studi Avanzati,
Via Bonomea 265, 34136 Trieste, Italia}
\email{alessio.logiudice@sissa.it}
\author{Andrea Pustetto}
\address{\scriptsize Andrea Pustetto:
Scuola Internazionale Superiore di Studi Avanzati,
Via Bonomea 265, 34136 Trieste, Italia}
\email{andrea.pustetto@sissa.it}
\keywords{Decorated vector bundles, principal Higgs bundles, moduli space, singular curves.}
\subjclass[2000]{Primary: 14D20 ; Secondary: 14D22}
\begin{document}
\maketitle
\begin{abstract}
A principal Higgs bundle $(P,\phi)$ over a singular curve $X$ is a pair consisting of a principal bundle $P$ and a morphism $\phi:X\to\text{Ad}P \otimes \Omega^1_X$. We construct the moduli space of principal Higgs G-bundles  over an irreducible singular curve $X$ using the theory of decorated vector bundles. More precisely, given a faithful representation $\rho:G\to Sl(V)$ of $G$, we consider principal Higgs bundles as triples $(E,q,\varphi)$ where $E$ is a vector bundle with $\rk{E}=\dim V$ over the normalization $\xtilde$ of $X$, $q$ is a parabolic structure on $E$ and $\varphi:E\ab{}\to L$ is a morphism of bundles, being $L$ a line bundle and $E\ab{}\doteqdot (E^{\otimes a})^{\oplus b}$ a vector bundle depending on the Higgs field $\phi$ and on the principal bundle structure. Moreover we show that this moduli space for suitable integers $a,b$ is related to the space of framed modules.
\end{abstract}

\section*{Introduction}
Let X be an irreducible curve over $\cc$ and $G$ a reductive complex algebraic group. Recall that a Higgs vector bundle $\mathfrak E$ is a pair $(E,\phi)$ where $E$ is a vector bundle over $X$ and $\phi:E\to E\otimes \omegax$ is a morphism of vector bundles; if $\dim X>1$, one requires that $\phi\wedge\phi =0$. Higgs bundles are important in many areas of mathematics and mathematical physics. For example, it was shown by Hitchin that their moduli spaces give examples of Hyper-K\"ahler manifolds (see \cite{hitchin1987self}).
It is possible to generalize Higgs vector bundles to principal objects. A principal Higgs $G$-bundle $\mathfrak P$ is a pair $(P,\phi)$ where $P$ is a principal $G$-bundle over $X$ and $\phi:X\to\text{Ad}P\otimes \omegax$ a section such that $[\phi,\phi]=0$. If the curve $X$ is smooth then the moduli space of semistable principal Higgs has been widely studied in the last years (see \cite{nitsure1991moduli} or \cite{simpson1994moduli}).\\
In this paper we consider principal Higgs bundles over singular curves and construct their moduli spaces. Since the cuspidal and nodal cases are similar, for simplicity we assume that is $X$ an irreducible nodal curve over $\cc$. If we fix a faithful representation $\rho:G\to Sl(V)$, we can consider a principal $G$-bundle $P$ as a pair $(E,\tau)$ where $E$ is a vector bundle over $X$ and $\tau$ is a morphism of $\ox$ algebras induced by the section $\sigma: X\to E/G$. Then in order to obtain a projective moduli space we enlarge our category allowing $E$ to be a torsion free sheaf $\efascio$, thus obtaining a pair $(\efascio,\tau)$ called a singular principal $G$-bundle. In \cite{bhosle1992generalised} Bhosle showed that the categories of torsion free sheaves on a nodal curve $X$ and of generalized parabolic bundles over its normalization $\xtilde$ are equivalent. In view of this result, we can focus our attention on the second category. Then we define descending principal Higgs bundles as quadruples $(E,q,\tautilde,\phitilde)$ where $E$ is a vector bundle over $\xtilde$, while $q$, $\tautilde$ and $\phitilde$ are morphisms from which we can reconstruct the principal bundle structure and the Higgs field. In order to study these descending bundles we further enlarge our category and so the final objects we shall consider will be triples $(E,q,\varphi)$, called decorated generalized parabolic bundles, where $(E,q)$ is a generalized parabolic vector bundle over $\xtilde$ and $\varphi:(E^{\otimes a})^{\oplus b}\to L$ is a morphism that generalize $\tautilde$ and $\phitilde$. To construct the moduli space we use results of Schmitt. We introduce a notion of semistability for decorated bundles that looks like the one introduced by Huybrecths and Lehn for framed sheaves \cite{huybrechts1995framed} and develop a machinery similar to Huybrecths and Lehn's one. We construct also Jordan-H\"older filtrations for these objects and so we can define $S$-equivalence for decorated generalized parabolic bundles, i.e. we stipulate that $E\simeq E'$ if and only if $gr(E)\simeq gr(E')$ (see Section \ref{sec-j-h-fil}).\\
Our main result is
\begin{teo}\label{teo-main}
There is a projective scheme $\singprincipalmodulispacess$ which co-represents the functor
\begin{align*}
\singprincipalmoduliss :\; & \textnormal{Sch}_{\cc}\longrightarrow\textnormal{Sets}\\
                         & \quad S\longmapsto\left\lbrace\begin{array}{c}
                                                   \textnormal{Isomorphism classes of families of semistable}\\
                                                   \textnormal{singular principal Higgs } G\textnormal{-bundles}\\
                                                   \textnormal{with trivial determinant and rank }r\\
                                                   \textnormal{parametrized by }S
                                                   \end{array}
\right\rbrace
\end{align*}
Moreover there exists an open subscheme $\singprincipalmodulispaces$ which represents the subfunctor $\singprincipalmodulis$ that sends a scheme $S$ to the set of the isomorphism classes of families of stable singular principal Higgs $G$-bundles parametrized by $S$. A closed point in $\singprincipalmodulispacess$ represents an $S$-equivalence class of singular principal Higgs $G$-bundles.
\end{teo}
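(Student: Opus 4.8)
The plan is to derive the statement from the moduli theory of decorated generalized parabolic bundles developed in the previous sections, and then to cut out, by closed and invariant conditions, the triples $(E,q,\varphi)$ that genuinely come from singular principal Higgs $G$-bundles. Using Bhosle's equivalence between torsion free sheaves on the nodal curve $X$ and generalized parabolic bundles on the normalization $\xtilde$, a family of singular principal Higgs $G$-bundles over $S$ corresponds to a family of descending principal Higgs bundles $(E,q,\tautilde,\phitilde)$; packaging the algebra morphism $\tautilde$ and the Higgs field $\phitilde$ into a single decoration $\varphi\colon(E^{\otimes a})^{\oplus b}\to L$, for the integers $a,b$ fixed in the paper, realizes $\singprincipalmoduliss$ as a subfunctor of the functor of $(\delta_1,\delta_2)$-semistable decorated generalized parabolic bundles. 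First I would make this identification of functors precise, checking that semistability of the principal object matches the intrinsic semistability of the associated triple.

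Next I would perform the GIT construction for the decorated triples. The first step is boundedness: one shows that the $(\delta_1,\delta_2)$-semistable triples of rank $r$ and trivial determinant form a bounded family, so that after twisting by a sufficiently ample $\oxtilde(N)$ every such $E$ is globally generated with $H^{>0}(E(N))=0$. Fixing a vector space $W$ with $\dim W=h^0(E(N))$, the quotients $W\otimes\oxtilde(-N)\twoheadrightarrow E$ are parametrized by a Quot scheme; adjoining the generalized parabolic datum $q$ (a point of a product of Grassmannians attached to the fibres over the node) and the decoration $\varphi$ (a point of the projectivization of the relevant space of homomorphisms) yields a parameter scheme on which $SL(W)$ acts with orbits equal to isomorphism classes. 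I would then fix a linearization of this action depending on $(\delta_1,\delta_2)$ and on the fixed numerical data.

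The heart of the proof is to show that, for this linearization, a point of the parameter scheme is GIT-(semi)stable if and only if the associated triple $(E,q,\varphi)$ is $(\delta_1,\delta_2)$-(semi)stable. This is carried out via the Hilbert--Mumford criterion: to each one-parameter subgroup of $SL(W)$ one attaches a weighted filtration $(\efil,\alphafil)$ of $E$, and one compares, term by term, the Mumford weight with the semistability function that combines the parabolic degree of the filtration with the weight of $\varphi$ along it. I expect this comparison --- together with the uniform bounds needed to make it independent of the auxiliary integer $N$ --- to be the main technical obstacle. Granting the equivalence, the general results (after Huybrechts--Lehn and Schmitt) produce a good quotient of the semistable locus which is a projective scheme co-representing the functor of $(\delta_1,\delta_2)$-semistable decorated generalized parabolic bundles, restricting to a geometric quotient on the stable locus.

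Finally I would descend this moduli space to singular principal Higgs bundles. The requirements that the decoration $\varphi$ arise from an algebra morphism $\tautilde$ defining a reduction of the structure group to $G$ and from an integrable Higgs field $\phitilde$ are closed and $SL(W)$-invariant, so they cut out a closed invariant subscheme of the parameter space; its image under the GIT quotient is a closed, hence projective, subscheme, which I would take to be $\singprincipalmodulispacess$. As in the G\'omez-Sols--Schmitt treatment of principal bundles, one must verify that for semistable points the rational reduction given by $\tautilde$ is an honest reduction, so that the closed points indeed parametrize (possibly degenerate) singular principal Higgs bundles. The identification of closed points with $S$-equivalence classes is then read off from the Jordan--H\"older filtration of Section~\ref{sec-j-h-fil}: closed orbits correspond to polystable objects, and two semistable objects lie in the same fibre of the quotient exactly when their graded objects $\gr$ are isomorphic. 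Over the stable locus the stabilizers are finite and the quotient is geometric, so the open subscheme $\singprincipalmodulispaces$ represents $\singprincipalmodulis$, which completes the argument.
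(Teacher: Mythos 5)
Your proposal follows the same overall strategy as the paper: translate singular principal Higgs $G$-bundles into descending bundles on $\xtilde$, encode $(\tautilde,\phitilde)$ as decorations, match the various (semi)stability notions (Proposition \ref{prop-equiv-semista-honest-singular-vs-descending}, Theorem \ref{teo-semistable-descending-vs-ddgpb}, and the Segre-embedding reduction from two decorations to one), and then appeal to GIT for decorated generalized parabolic bundles. Two differences are worth noting. First, the paper does not redo the Quot-scheme/Hilbert--Mumford analysis for the $\delta$-semistability of Definition \ref{def-semistable-decorated}: it imports Schmitt's projective moduli space $\unomodulispacess_{(d,r,a,b,L)}$ from \cite{schmitt2005singular} wholesale, so the ``heart of the proof'' you describe is outsourced rather than carried out (the explicit GIT computation that does appear in the paper, in the final section, concerns the different \emph{frame} semistability). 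Second, to obtain a projective scheme for the Higgs objects the paper takes the closure of the image of the locus of semistable descending principal Higgs bundles inside $\unomodulispacess_{0,r,a,b,L}$ and invokes Theorem 2.5 of \cite{schmitt2000universal}, whereas you cut out a closed $SL(W)$-invariant subscheme of the parameter space by the conditions that $\varphi$ arise from an algebra morphism and a Higgs field. Your version requires checking that those conditions really are closed (they are, being polynomial identities, but the reduction-to-$G$/honesty condition is \emph{open} and must be kept off that list, as you partly acknowledge); the paper's closure trick sidesteps this at the cost of a less explicit description of the boundary. One small mismatch: the Jordan--H\"older theory of Section \ref{sec-j-h-fil} is developed for frame semistability, not for the $(\delta_1,\delta_2)$-semistability used in Theorem \ref{teo-main}, so the $S$-equivalence assertion in the paper's proof rests on \cite{schmitt2000universal} rather than on that section, and your appeal to Section \ref{sec-j-h-fil} would need the corresponding statements for $\delta$-semistability.
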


In Section \ref{sec-descending} we show that we can treat a principal Higgs $G$-bundle over a nodal curve as a particular type of vector bundle on the normalization of the curve called a descending bundle. In Section \ref{secdecoration-semistability} we define decorated vector bundles, introduce a notion of (semi)stability for them and show that all the definitions of (semi)stability that we give agree. In Section \ref{secmodulispace} we construct the moduli space of decorated bundles and give a proof for Theorem \ref{teo-main}; moreover we study semistable $n+1$-uples $(E,\phi_1,\dots,\phi_n)$ where $E$ is a vector bundle over a curve and $\phi_i:E\to L_i$ where $L_i$ are line bundles over $X$, generalizing semistable pairs studied by Nitsure in \cite{nitsure1991moduli}. In the last section we introduce a new notion of semistability for decorated bundles, define Jordan-H\"older filtrations with respect to this semistability and construct the moduli space.

\section{Descending parabolic Higgs $G$-bundles}\label{sec-descending}
Let $X$ be an irreducible nodal curve over $\cc$ and, for sake of convenience, assume that $X$ has only a simple node $x_0\in X$. Let $\nu:\xtilde\to X$ be the normalization map and let $x_1,x_2\in\nu^{-1}(x_0)$. Moreover let $G$ be a reductive algebraic group over $\cc$ and $\rho:G\to Sl(V)$ a faithful representation of $G$.
\begin{oss}
If $G$ is semisimple every faithful representation $\rho:G\to Gl(V)$ is such that $\rho(G)\subseteq Sl(V)$, indeed $\det :\rho(G)\to\cc^*$ is a character and so, since $G$ is semisimple, it is trivial.
\end{oss}
\begin{deff}[\textbf{Principal Higgs Bundle}]
A principal Higgs $G$-bundle $\mathfrak{E}$ over $X$ is a pair $(P,\phi)$ consisting of a principal $G$-bundle $P$ over $X$ and a section $\phi:X\to Ad(P)\otimes\omegax$, where $Ad(P)\doteqdot P\times_{\text{Ad}}\mathfrak{g}$.
\end{deff}
\subsection{Singular principal Higgs $G$-bundles}
Let $P\stackrel{G}{\longrightarrow}X$ be a principal $G$-bundle and $\phi:X\to \text{Ad}(P)\otimes\omegax$ a Higgs field. Thanks to the representation $\rho$ we can associate to $P$ a vector bundle $E\doteqdot P_{\rho}=P\times_{\rho}V$ over $X$; the inclusion of $P$ in the $Gl(V)$-bundle $\isom(V\otimes\ox,E^{\vee})$ associated to $E$ gives a section $\sigma:X\to\isom(V\otimes\ox,E^{\vee})/G$ as follows:
\begin{equation}\label{diag2}
\xymatrix{ P \; \ar[d]^G\ar@{^{(}->}[r] & \isom(V\otimes\ox,E^{\vee}) \; \ar[d]^G\ar@{^{(}->}[r] & \home(V\otimes\ox,E^{\vee})\ar[d]^G\\
X \ar[r]^(.25){\sigma} & \isom(V\otimes\ox,E^{\vee})/G \; \ar@{^{(}->}[r] & \home(V\otimes\ox,E^{\vee})//G
}
\end{equation}
Since $Spec(\symstar(E\otimes V)^G)=\home(V\otimes\ox,E^{\vee})//G$ and $Spec(\ox)=X$, the section $X\to\home(V\otimes\ox,E^{\vee})//G$ induces a morphism of $\ox$-algebras:
\begin{equation*}
\tau:\symstar(E\otimes V)^G\to\ox.
\end{equation*}
The Higgs field $\phi:X\to Ad(P)\otimes\omegax$ induces a section $\phi:X\to\zaza{E}\otimes\omegax$, that we call again $\phi$ for simplicity.\\
Vice versa, given a vector bundle $E$ with trivial determinant and a morphism of $\ox$-algebras $\tau:\symstar(E\otimes V)^G\to\ox$ such that the induced section $\sigma:X\to\home(V\otimes\ox,E^{\vee})//G$ has image in $\isom(V\otimes\ox,E^{\vee})/G$, we have that $\sigma^*\isom(V\otimes\ox,E^{\vee})$ is a principal $G$-bundle over $X$:
\begin{equation*}
\xymatrix{\sigma^*\isom(V\otimes\ox,E^{\vee}) \ar[d]^G \ar[r] & \isom(V\otimes\ox,E^{\vee}) \ar[d]^G\ar@{<~>}[r] & E \ar[d]^G\\
X \ar[r]_(.35){\sigma} & \isom(V\otimes\ox,E^{\vee})/G \ar@{<~>}[r] & E/G.
}
\end{equation*}
Moreover given a section $\phi:X\to\zaza{E}\otimes\omegax$ such that the corresponding morphism $\phi\colon X\to Ad(\isom(V\otimes\ox,E^{\vee}))\otimes\omegax$ has image in $Ad(\sigma^*\isom(V\otimes\ox,E^{\vee}))\otimes\omegax$, then $\phi$ induces a Higgs field also over $\sigma^*\isom(V\otimes\ox,E^{\vee})$.
So, for any fixed faithful representation $\rho:G\to Sl(V)$, there is a one to one correspondence between:
\begin{enumerate}
\item Principal Higgs $G$-bundles over $X$;
\item Triples $(E,\tau,\phi)$ with
\begin{itemize}
\item[-] $E=$ locally free sheaf with $\det E\simeq\ox$;
\item[-] $\tau:\symstar(E\otimes V)^G\to\ox$ morphism of $\ox$-algebras such that the induced section $\sigma:X\to\home(V\otimes\ox,E^{\vee})//G$ has image in $\isom(V\otimes\ox,E^{\vee})/G$;
\item[-] $\phi:X\to \zaza{E}\otimes\omegax$ such that induces a morphism $X\to Ad(\sigma^*\isom(V\otimes\ox,E^{\vee}))\otimes\omegax$.
\end{itemize}
\end{enumerate}
All this leads us to the following definition:

\begin{deff}[\textbf{Singular principal Higgs $G$-bundles}]
We will say that a triple $(\efascio,\tau,\phi)$ is a singular principal Higgs $G$-bundle if:
\begin{itemize}
\item[-] $\efascio$ is a torsion free sheaf;
\item[-] $\tau:\symstar(\efascio\otimes V)^G\to\ox$ is a morphism of $\ox$-algebras;
\item[-] $\phi:X\to\zaza{\efascio}\otimes\omegax$ is a section.
\end{itemize}
Let $\sigma:X\to\home(V\otimes\ox,\efascio^{\vee})//G$ be the section induced by $\tau$ and let $U_{\efascio}$ be the open subset of $X$ in which $\efascio$ is locally free, i.e. $U_{\efascio}=X$ if $\efascio$ is a vector bundle and $U_{\efascio}=X\smallsetminus \{x_0\}$ otherwise. If $\sigma(U_{\efascio})\subseteq\isom(V\otimes\ox,\rest{\efascio^{\vee}}{U_{\efascio}})/G$ we will say that the singular principal Higgs $G$-bundle is \textbf{honest}.
\end{deff}
\begin{oss}\label{remark-det-banale}
If the singular principal Higgs $G$-bundle $(\efascio,\tau)$ is honest and $U_{\efascio}=X$ then $\det\efascio\simeq\ox$.
\end{oss}

\subsection{Descending principal Higgs bundles}

In this section we want to show that there exists a one-to-one correspondence between singular principal Higgs bundles $(\efascio,\tau,\phi)$ over $X$ and what we will call \textit{descending principal Higgs bundles} (over $\xtilde$).\\

Bhosle shows in \cite{bhosle1992generalised} that there is a one to one correspondence between torsion free sheaves over a nodal curve $X$ and \textit{generalized parabolic vector bundles} over the normalization $\xtilde$ of the nodal curve. We recall that a \textit{generalized parabolic vector bundle} or, for simplicity, a \textit{parabolic vector bundle} with support the divisor $D$, is a pair $(E,q)$ where $E$ is a vector bundle over $\xtilde$ and $q:E_D\to R$ is a surjective homomorphism of vector spaces. In our case we choose $D=x_1+x_2$ and therefore $q:E_{x_1}\oplus E_{x_2}\to R$ is a surjective morphism of vector spaces.
More precisely, if $(E,q)$ is a generalized parabolic vector bundle over $\xtilde$, Bhosle shows that the sheaf $\efascio$
\begin{equation}\label{eq-fascio-tor-free}
\efascio\doteqdot Ker[\nu_*E\longrightarrow\nu_*(E_{x_1}\oplus E_{x_2})\simeq E_{x_1}\oplus E_{x_2}\stackrel{q}{\longrightarrow}R]
\end{equation}
is a torsion free sheaf over $X$ such that $\nu^*\efascio=(E,q)$.\\

We recall that a torsion free sheaf $\efascio$ over a curve $Y$ is said to be \textbf{(semi)stable} if and only if for every non trivial subsheaf $\mathcal{F}\subset\efascio$ the following inequality holds:
\begin{equation*}
\frac{\deg\mathcal{F}}{\rk{\mathcal{F}}}(\leq)\frac{\deg\efascio}{\rk{E}}
\end{equation*}
where the degree of a torsion free sheaf is defined by the equality:
\begin{equation*}
\chi({\efascio})\doteqdot h^0(Y,\efascio)-h^1(Y,\efascio)=\deg\efascio+\rk{\efascio}(1-g).
\end{equation*}
The degree of a locally-free sheaf or of the corresponding vector bundle could also be defined as the degree of the determinant, i.e. the degree of the associated divisor.\\

On the other hand, a generalized parabolic vector bundle $(E,q)$ over a smooth curve is said to be \textbf{$\alpha$-(semi)stable} (for $\alpha\in[0,1]\cap\qq$) if and only if for every subbundle $F\subset E$ the following inequality holds:
\begin{equation*}
\frac{\degparalpha F}{\rk{F}}(\leq)\frac{\degparalpha E}{\rk{E}},
\end{equation*}
where the $\alpha$-parabolic degree of $F\subset E$ is defined as follows:
\begin{equation*}
\degparalpha F\doteqdot\deg F-\alpha\;\dim q(F_{x_1}\oplus F_{x_2}).
\end{equation*}

Bhosle shows in \cite{bhosle1992generalised} Proposition 1.9 that a torsion free sheaf $\efascio$ over $X$ is (semi)stable if and only if the corresponding generalized parabolic vector bundle $(E,q)$ over $\xtilde$ is $1$-(semi)stable. From now on we fix the stability parameter $\alpha=1$ and we will write $\degpar$ for $1$-$\degpar$.\\

Now let $(E,q,\tautilde,\phitilde)$ be a quadruple where $(E,q)$ is a generalized parabolic vector bundle over $\xtilde$ and $\tautilde:\symstar(E\otimes V)^G\to\oxtilde$, $\phitilde:E\to E\otimes\omegaxt$ are morphisms of $\oxtilde$-algebras and $\oxtilde$-modules respectively. We define the \textbf{push forward} $(\efascio,\tau,\phi)\doteqdot\nu_*(E,q,\tautilde,\phitilde)$ of such quadruples as follows: the torsion free sheaf $\efascio$ is the sheaf associated to the generalized parabolic bundle $E$ as in \eqref{eq-fascio-tor-free}, from the inclusion $\efascio\subset\nu_*E$ we get a morphism $\nu^*\efascio\to\nu^*\nu_*E\to E$, so we define:
\begin{align*}
\phitilde':\nu^*\efascio\longrightarrow E\stackrel{\phitilde}{\longrightarrow}E\otimes\omegaxt\\
\phi:\efascio\longrightarrow\nu_*\nu^*\efascio\stackrel{\nu_*\phitilde'}{\longrightarrow}\nu_*(E\otimes\omegaxt),
\end{align*}
and
\begin{align*}
\tautilde':\symstar(\nu^*\efascio\otimes V)^G\to\symstar(E\otimes V)^G\stackrel{\tautilde}{\longrightarrow}\oxtilde\\
\tau:\symstar(\efascio\otimes V)^G\longrightarrow\symstar(\nu_*\nu^*\efascio\otimes V)^G\stackrel{\nu_*\tautilde'}{\longrightarrow}\nu_*\oxtilde.
\end{align*}

Conversely let $(\efascio,\tau,\phi)$ be a singular principal Higgs $G$-bundle over $X$, then from the torsion free sheaf $\efascio$ we get a generalized parabolic vector bundle $(E,q)$ over $\xtilde$ defining $E\doteqdot\nu^*\efascio$ and $q:E_{x_1}\oplus E_{x_2}\to\text{Coker}(\efascio\to\nu_*\nu^*\efascio)$ (see \cite{schmitt2005singular}); moreover we set $\phitilde\doteqdot\nu^*\phi$ and $\tautilde\doteqdot\nu^*\tau$. So we get a quadruple $(E,q,\tautilde,\phitilde)$ such that $\nu_*(E,q,\tautilde,\phitilde)\simeq(\efascio,\tau,\phi)$.

\begin{oss}
From the inclusion $i\colon\efascio\otimes\omegax\to\nu_*E\otimes\nu_*\omegaxt$ and the exactness of the following sequence:
\begin{equation*}
0\to K_{x_0}\to\nu_*E\otimes\nu_*\omegaxt\to\nu_*(E\otimes\omegaxt)\to0
\end{equation*}
we get an inclusion $j\colon\efascio\otimes\omegax\to\nu_*(E\otimes\omegaxt)$, being $\efascio\otimes\omegax$ torsion free and $K_{x_0}$ of pure torsion.
\end{oss}

\begin{deff}[\textbf{Descending principal Higgs bundles}]\label{def-descending}
A descending principal Higgs $G$-bundle on $\xtilde$ is a quadruple $(E,q,\tautilde,\phitilde)$ where $(E,q)$ is a generalized parabolic vector bundle over $\xtilde$, $\tautilde:\symstar{(E\otimes W)}^G\to\oxtilde$ is a homomorphism of $\oxtilde$-algebras and $\phitilde:\xtilde\to \zaza{E}\otimes\Omega^1_{\xtilde}$ is a section such that:
\begin{enumerate}
\item The pair $(E,\tautilde)$ defines a principal $G$-bundle $\mathcal{P}(E,\tautilde)$ on $\xtilde$ (therefore $\det E\simeq\oxtilde$, see Remark \ref{remark-det-banale});
\item The image of the homomorphism $\tau$ from the triple $(\efascio,\tau,\phi)=\nu_*(E,q,\tautilde,\phitilde)$ lies in the sub algebra $\ox$ of $\nu_*\oxtilde$;\label{cond1-decor}
\item The image of the homomorphism $\phi$ from the triple $(\efascio,\tau,\phi)=\nu_*(E,q,\tautilde,\phitilde)$ lies in $\zaza{\efascio}\otimes\Omega^1_{X}$.\label{cond2-decor}
\end{enumerate}
If we do not require the conditions \eqref{cond1-decor} and \eqref{cond2-decor} we will call such a quadruple \textbf{singular Higgs $G$-bundle with a generalized parabolic structure (=GPS)}.
\end{deff}

\subsubsection*{Cuspidal case}
Let $X$ be a cuspidal curve, $\nu:\xtilde\to X$ the normalization map and $\{x_1\}=\nu^{-1}(x_0)$ the preimage of the cuspidal point $x_0\in X$. Bhosle showed in \cite{bhosle1996generalized} Proposition 4.7 that there is a correspondence between torsion free sheaves on $X$ and generalized parabolic bundles with parabolic structure on the divisor $2x_1$. Also in this case the (semi)stability notions for these objects coincide. Therefore one can easily extend our constructions to the cuspidal case. From now on we will work assuming $X$ is a nodal curve, but all results, with the obvious slight modifications, will also hold when $X$ has a cusp.

\section{Decoration and (semi)stability}\label{secdecoration-semistability}
\subsection{Singular Higgs $G$-Bundles with GPS as decorated bundles}\label{secsingular}

Let $(E,q,\tautilde,\phitilde)$ a singular Higgs $G$-bundle with a GPS, since $\symstar(E\otimes V)^G$ is a finitely generated $\oxtilde$-algebra, we get a surjective morphism
\begin{equation*}
\bigoplus_{i=1}^s(E\otimes V)^{\otimes i}\longrightarrow\symstar(E\otimes V)^G,
\end{equation*}
for some $s\in\nn$, so $\tautilde$ induces a map
\begin{equation*}
 \varphi_1'':\bigoplus_{i=1}^s(E\otimes V)^{\otimes i}\longrightarrow\symstar(E\otimes V)^G\stackrel{\tautilde}{\longrightarrow}\oxtilde.
\end{equation*}
Let $N_i=\dim(V^{\otimes i})$, then we get a morphism
\begin{equation*}
 \varphi_1':\bigoplus_{i=1}^sE^{\otimes iN_i}\longrightarrow\oxtilde.
\end{equation*}
Denote $E\abc{}\doteqdot (E^{\otimes a})^{\oplus b}\otimes (\det E)^{\otimes -c}$. Since the natural representation $Gl(V)\to Gl(\bigoplus_{i=1}^s V^{\otimes iN_i})$ is homogeneous, i.e. $\forall z\in\cc^*$ $z\cdot Id_{GL(V)}\mapsto z^{h}\cdot Id_{Gl(\oplus_{i=1}^s V^{\otimes iN_i})}$ for some integer $h$, by Corollary 1.1.5.4 in \cite{schmitt2008geometric} there exist integers $a_1,b_1,c_1$ such that
\begin{equation*}
E\abc{1}=\bigoplus_{i=1}^sE^{\otimes iN_i}\oplus W
\end{equation*}
for a suitable vector bundle $W$. Therefore we can extend $\phi'$ to a morphism
\begin{equation*}
\varphi_1:E\abc{1}\longrightarrow\oxtilde,
\end{equation*}
by setting $\rest{\varphi_1}{W}=0$ and $\rest{\varphi_1}{W^\perp}=\varphi_1'$ (see \cite{schmitt2005singular} Section 3 for details).\\
Conversely, if $E\abc{1}$ decomposes as $\bigoplus_{i=1}^sE^{\otimes iN_i}\oplus W$ and $\rest{\varphi_1}{W}\equiv 0$, then $\varphi_1$ induces a morphism $\tautilde:\symstar(E\otimes V)^G\to\oxtilde$.\\

Now we look at $\phitilde:\zaza{E}\to\omegaxt$. Since $\home(\oxtilde,\omegaxt)\simeq H^0(\xtilde,\omegaxt)$, if the genus of $\xtilde$ is $\geq 1$ we can choose a section $\omega:\det E\simeq\oxtilde\to\omegaxt$ not identically zero. Let $\rho':Gl(V)\to Gl(\zaza{V}\oplus\cc)$ be the natural representation obtained by identifying $\cc$ with $\bigwedge^{\dim V}V$. The pair $(\phitilde,\omega)$ induces a map 
\begin{equation*}
\varphi_2':E_{\rho'}=E\times_{\rho'}(\zaza{V}\oplus\cc)\to\omegaxt;
\end{equation*}
indeed $E_{\rho'}\simeq\zaza{E}\oplus\oxtilde$ and so $\varphi_2'(e,\mu)=\phitilde(e)+\omega(\mu)$ for any $e\in\zaza{E}$ and $\mu\in\oxtilde$ over the same point $x\in\xtilde$.
Since the representation $\rho'$ is homogeneous, as before there exist $a_2,b_2,c_2\in\mathbb{N}$ such that $\rho\abc{2}=\rho'\oplus\rho''$ for a suitable representation $\rho''$. Here $\rho\abc{2}:Gl(V)\to Gl(V\abc{2})$ is the obvious representation in $V\abc{2}=(V^{\otimes a_2})^{\oplus b_2}\otimes(\bigwedge^{\dim V}V)^{\otimes -c_2}$.
Therefore, since 
\begin{equation*}
E\abc{2}=(E^{\otimes a_2})^{\oplus b_2}\otimes(\bigwedge^{\rk E}E)^{\otimes -c_2}=E_{\rho'}\oplus W
\end{equation*}
for a suitable vector bundle $W$, the map $\varphi_2'$ extends to a map
\begin{equation*}
\varphi_2:E\abc{2}\longrightarrow\omegaxt
\end{equation*}
such that $\rest{\varphi_2}{W}\equiv 0$ and $\rest{\varphi_2}{E_{\rho'}}=\varphi_2'$. Conversely, if $\varphi_2$ and $\rho\abc{2}$ are defined as before they give rise non-zero morphisms $\phitilde:\zaza{E}\to\omegaxt$ and $\omega:\oxtilde\to\omegaxt$.
\begin{oss}
Suppose the genus of $\xtilde$ is strictly positive and fix a morphism $\omega\in\home(\oxtilde,\omegaxt)$ not identically zero. There is a natural inclusion of Higgs fields in a projective space given as follows:
\begin{align*}
\home(\zaza{E},\omegaxt) & \hookrightarrow\pp\left(\home(\zaza{E},\omegaxt)\oplus<\omega>\right)\\
v\quad &\hookrightarrow\quad [v:1]
\end{align*}
where with $<\omega>$ we denote the linear subspace of $\home(\oxtilde,\omegaxt)$ generated by $\omega$. Note that $[v:1]$ and $[\mu\cdot v:1]$ are different points for any $\mu\in\cc\smallsetminus\{1\}$.
\end{oss}
\begin{oss}
For any representation $\varrho:Gl(V)\to Gl(W)$ we can extend the previous construction  to $\varrho$-pairs $(E,\phi)$ where $E$ is a vector bundle with fiber $V$, $\phi:E_{\varrho}\to L$ is a morphism of vector bundles and $L$ is a line bundle of positive degree.
\end{oss}

So we define:
\begin{deff}[\textbf{Double-decorated generalized parabolic bundles}]
A double-decorated generalized parabolic bundle (= \ddgpb) with decoration of type $\tipo\doteqdot(d,r,a_1,b_1,c_1,a_2,b_2,c_2,L_1,L_2)$ is a quadruple $(E,q,\varphi_1,\varphi_2)$ where $(E,q)$ is a generalized parabolic bundle of rank $r$ and degree $d$, while
\begin{align*}
&\varphi_1:E\abc{1}\longrightarrow L_1\\
&\varphi_2:E\abc{2}\longrightarrow L_2,
\end{align*}
are morphisms called decorations. Sometimes we will call a \ddgpb just ``double-decorated''.
\end{deff}

\begin{oss}\label{oss-det-banale-per-discendenti}
\begin{enumerate}
\item If a \ddgpb $(E,q,\varphi_1,\varphi_2)$ is induced by a descending Higgs $G$-bundle, then $\det E\simeq\oxtilde$ and so $E\abc{}=(E^{\otimes a})^{\oplus b}\otimes(\det E)^{\otimes -c}\simeq E\ab{}$.
\item For what we said before a \ddgpb obviously generalize singular Higgs bundles with GPS.
\end{enumerate}
\end{oss}

\subsection{(Semi)stability for the double-decorated bundles}\label{secsemista}
In this section we want to define a notion of (semi)stability for a \ddgpb, so let $(E,q,\varphi_1,\varphi_2)$ be a \ddgpb with a decoration of type $(d,r,\underline{a},\underline{b},\underline{c},L_1,L_2)$, where $\underline{a},\underline{b},\underline{c}\in\nn\times\nn$.\\

A representation $\varrho:G\to Gl(V)$ gives rise to an action of $G$ on $V$ and so to an action on $\pp(V)$ with a linearization on the line bundle $M=\mathcal{O}_{\pp(V)}(1)$.
Now for any $x\in \pp(V)$, and any one-parameter subgroup $\lambda:\cc^*\to G$, the point $x_{\infty}= \lim_{z\to\infty} \lambda(z)\ldotp x$ is a fixed point for the action of $\cc^*$ induced by $\lambda$. So the linearization provides a linear action of $\cc^*$ on the one dimensional vector space $M_{x_{\infty}}$. This action is of the form $z\ldotp v=z^\gamma v$ for some $\gamma \in \mathbb Z$, and finally we define

\begin{equation*}
\mu_{\rho}(\lambda;x)=-\gamma.
\end{equation*}

More generally, if we have an action $\chi$ of an algebraic group $G$ on a projective variety $Y$ and a linearization of the action to a line bundle $M$, we can define in the same way $\mu_\chi(\lambda;y)$ for any one-parameter subgroup $\lambda$ and any $y\in Y$. 
Finally, if we have a morphism of projective varieties $\sigma:X\to Y$, we define 

\begin{equation*}
\mu_{\chi}(\lambda;\sigma)\doteqdot\max_{x\in X}\;\mu_{\chi}(\lambda;\sigma(x)).
\end{equation*}

Given a representation $\varrho:Gl(V)\to Gl(W)$, a nonzero map $\varphi:E_{\varrho}\to L$ provides a section $\sigma:\xtilde\to\pp(E_{\varrho})$ and vice versa.

So given a \ddgpb $(E,q,\varphi_1,\varphi_2)$, the maps $\varphi_i:E\abc{i}\to L_i$ provide sections $\sigma_i:\xtilde\to\pp(E\abc{i})$ for $i=1,2$.\\

Let $\lambda:\cc^*\to G$ be an one-parameter subgroup of $G$, or equivalently let $(\efil,\alphafil)$ be the corresponding weighted filtration of $E$, then we will denote by
\begin{equation*}
\mu_{\rho\abc{i}}(\lambda;\varphi_i)\equiv\mui\doteqdot\mu_{\rho\abc{i}}(\lambda;\sigma_i)
\end{equation*}
where, with some abuse of notation, we denote by $\lambda$ also the induced one-parameter subgroup $\cc^*\stackrel{\lambda}{\longrightarrow}G\stackrel{\rho}{\longrightarrow}Gl(V)\stackrel{\rho\abc{i}}{\longrightarrow} Gl(V\abc{i})$, obtained by composing $\lambda$ with the representations $\rho$ and $\rho\abc{i}$, and we denote by $\rho\abc{i}$ also the composition $\rho\abc{i}\circ\rho$.\\

Sometimes, for sake of convenience, we will write $\mu(F,E)$ instead of $\mu_{\rho_{a,b,c}}(0\subset F\subset E,(1);\varphi)$.

\begin{oss}
\begin{enumerate}
 \item $\mu_{\rho\abc{i}}(\lambda;\sigma_i(x))=\mu_{\rho\abc{i}}(\lambda;\sigma_i(y))$ for all $x,y$ belonging to the same irreducible component of $\xtilde$. (\cite{schmitt2000universal} Remark 1.5)
 \item For $i=1,2$ the following equality holds:
\begin{equation*}
\mu_{\rho\abc{i}}(\lambda;\sigma_i(x))=-\min_j\{\gamma_j^{(i)}\; | \; (\sigma_i(x))(v_j^{(i)})\neq 0\} 
\end{equation*}
where $\{v_j^{(i)}\}_j$ is a base of eigenvectors for the action of $\lambda$ over $V\abc{i}$ and, with some abuse of notation, by writing $(\sigma_i(x))(v_j^{(i)})$ we mean that we have chosen a representative of the class $\sigma_i(x)\in\pp(E\abc{i})$, and so we can think of $\sigma_i(x)$ as an element of $V\abc{i}^{\vee}$.
 \item The following equality holds for $i=1,2$:
\begin{equation*}
\muic=-\min\{\gamma_{j_1}+\dots+\gamma_{j_{a_i}}\; | \; \rest{\varphi_i}{(E_{j_1}\otimes\dots\otimes E_{j_{a_i}})^{\oplus b_i}}\not\equiv0\}
\end{equation*}
where $\efil:\;0\subset E_1\subset\dots\subset E_s\subset E_{s+1}=E$ is the weighted filtration with weights $\alphafil=(\alpha_j)_{j\leq s}$ induced by the one-parameter subgroup $\lambda$, while
\begin{equation*}
\underline{\gamma}=(\gamma_1,\dots,\gamma_r)\doteqdot\sum_{j=1}^s \alpha_j (\underbrace{\rk{E_j}-r,\dots,\rk{E_j}-r}_{\rk{E_j}\text{-times}},\underbrace{\rk{E_j},\dots,\rk{E_j}}_{r-\rk{E_j}\text{-times}}).
\end{equation*}
See \cite{schmitt2005singular} Remark 3.1.1.
 \item If the pair $(\phitilde,\omega)$ induces $\varphi_2$ as in Section \ref{secsingular} we have that
\begin{equation*}
\mu_{\rho\abc{2}}(\filtrazione;\varphi_2)=\mudus.
\end{equation*}
\end{enumerate}
\end{oss}

\begin{deff}[\textbf{(Semi)stable \ddgpb bundles}]\label{def-semistable-ddgpb}
We will say that the decorated bundle $(E,q,\varphi_1,\varphi_2)$ with a decoration $(d,r,\underline{a},\underline{b},\underline{c},L_1,L_2)$ is $(\delta_1,\delta_2)$-(semi)stable, for $\delta_i\in\qq_{>0}$, if for all weighted filtrations $(\filtrazione)$ the following inequality holds:
\begin{equation*}
P(\filtrazione)+\delta_1\,\munoc+\delta_2\,\muduc\geq0
\end{equation*}
where
\begin{equation*}
P(\filtrazione)\doteqdot\sum_{j=1}^s\alpha_j[\rk{E_j}\degpar(E)-\rk{E}\degpar(E_j)].
\end{equation*}
\end{deff}

\subsection{$\beta$-filtrations and Higgs sections}

In this section we will recall the notion of $\beta$-filtration (\cite{schmitt2005singular} pg. 217) and Higgs section, or equivalently Higgs reduction, (Definition \ref{def-higgs-reduction}). In the next section, thanks to these notions, we will be able to define (semi)stability for descending principal Higgs bundles (Definition \ref{def-semistable-descending-higgs-bundles}) and at last we will show the equivalence between such definition and the Definition \ref{def-semistable-ddgpb}.\\

Let $\rho:G\to Sl(V)\hookrightarrow Gl(V)$ the faithful representation fixed at the beginning, so we can identify $G$ with a subgroup of $Sl(V)$. Given a one-parameter subgroup $\lambda:\cc^*\to G$ we denote by
\begin{equation*}
Q_G(\lambda)\doteqdot \{g\in G \; | \; \exists \lim_{z\to\infty}\lambda(z)\cdot g\cdot \lambda(z)^{-1}\}
\end{equation*}
the parabolic subgroup of $G$ induced by $\lambda$.\\

Before explaining what we mean by $\beta$-filtration we recall some general results about parabolic subgroups and representations theory.
\begin{oss}\label{oss-beta-filtr}
\begin{enumerate}
\item Since $G$ is reductive, if $Q'$ is a parabolic subgroup of $Gl(V)$, then $Q'\cap G$ is a parabolic subgroup of $G$.\label{oss1-beta-filtr} [Sketch: if $B$ ($B_G$) is a borelian subgroup of $Gl(V)$ (resp. of $G$) then, up to conjugacy class, $B\cap G=B_G$ and so $Q'\cap G\supseteq B_G$]
\item Given a parabolic subgroup $Q$ of $G$ and a representation $\rho$, we can construct a parabolic subgroup of $Gl(V)$; in fact, given $Q$, there exists a one-parameter subgroup $\lambda:\cc^*\to G$ such that $Q=Q_G(\lambda)$, then the set $Q_{Gl(V)}(\rho\circ\lambda)$ is a parabolic subgroup of $Gl(V)$.\label{oss2-beta-filtr}
\item Given $\lambda':\cc^*\to Gl(V)$, or equivalently the parabolic subgroup $Q'$ associated to $\lambda'$, there always exists $\lambda:\cc^*\to G$ such that $Q'\cap G=Q_G(\lambda)$ (see \eqref{oss1-beta-filtr}). The following diagram is \textbf{not} in general commutative:
\begin{equation*}
\xymatrix{ \cc^*\ar[d]_{\lambda}\ar[r]^{\lambda'} & Gl(V)\\
G\ar[ur]_{\rho}
}
\end{equation*}
\item Given a parabolic subgroup $Q'\subset Gl(V)$ and fixing a representation $\rho:G\to G(V)$, it is possible to define a parabolic subgroup $Q=Q'\cap G\subset G$ (see \eqref{oss1-beta-filtr}) and from $Q$ we can obtain a parabolic subgroup $Q''\subset Gl(V)$ as explained in \eqref{oss2-beta-filtr}. Therefore, fixing a base of $Gl(V)$, we have a map
\begin{equation*}
\zeta:\{\text{Parabolic subgroups of }Gl(V)\}\to\{\text{Parabolic subgroups of }Gl(V)\}.
\end{equation*}
\item We will call \textbf{stable} the parabolic subgroups of $Gl(V)$ such that $Q'=\zeta(Q')$, with respect to the same base of $Gl(V)$.
\end{enumerate}
\end{oss}

Now we want to construct a $\beta$-filtration of a singular Higgs $G$-bundle with GPS $(E,q,\tautilde,\phitilde)$ from a given one-parameter subgroup $\lambda:\cc^*\to G$ and a section $\beta:\xtilde\to\mathcal{P}(E,\tautilde)/Q_G(\lambda)$. Consider the principal $Q_G(\lambda)$-bundle $\beta^*\mathcal{P}(E,\tautilde)$. We define
\begin{equation*}
E_i^{\vee}\doteqdot\beta^*\mathcal{P}(E,\tautilde)\times_{\rho}V_i^{\vee}, \qquad \text{for }i=1,\dots,s,
\end{equation*}
this gives a filtration of $E^{\vee}$. Dualizing the inclusions $E_i^{\vee}\subset E^{\vee}$ and defining $E_i=\ker(E^{\vee}\to E_{s+1-i}^{\vee})$ we get a filtration $\efil_\beta$ of $E$. Moreover, setting $\alpha_i\doteqdot(\gamma_{i+1}-\gamma_{i})/r$ (where $\gamma_i$ are the weights related to $\lambda$) and $\alphafil_\beta\doteqdot(\alpha_s,\dots,\alpha_1)$, we get the desired weighted filtration $(\efil_{\beta},\alphafil_{\beta})$.\\

Conversely let $(\filtrazione)$ be a weighted filtration of $E$ and let $({\efil}^{\vee},\alphafil^{\vee})$ the corresponding weighted filtration of $E^{\vee}$ where $\alphafil^{\vee}_\beta=(\alpha_s,\dots,\alpha_1)$ if $\alphafil=(\alpha_1,\dots,\alpha_s)$. To this filtration one can associate the morphisms:
\begin{align*}
& \lambda':\cc^*\to Gl(V)\\
& \beta':\xtilde\to\isom(V\otimes\oxtilde,E^{\vee})/Q',
\end{align*}
where $Q'\doteqdot Q_{Gl(V)}(\lambda')$. Indeed the filtration $(\filtrazione)$ induces a weighted flag of $V$ and so an one-parameter subgroup of $Gl(V)$. Moreover the inclusion of principal bundles induces a section $\beta'$ as follows:
\begin{equation*}
\xymatrix{ \isom(V^{\bullet}\otimes\oxtilde,(\efil)^{\vee})\; \ar@{^{(}->}[r] \ar[d] & \isom(V\otimes\oxtilde,E^{\vee})\ar[d]\\
\xtilde \ar@{-->}[r]^(0.3){\beta'} & \isom(V\otimes\oxtilde,E^{\vee})/Q'.
}
\end{equation*}

Then we will say that $\efil$ is a \textbf{$\beta$-filtration}, and we will write $\efil_{\beta}$ (instead of $\efil$), if there exists $\beta:\xtilde\to\mathcal{P}(E,\tautilde)/Q$ such that the following diagram commutes:
\begin{equation*}
\xymatrix{ & \mathcal{P}(E,\tautilde) \ar@{^{(}->}[r] \ar[d] & \isom(V\otimes\oxtilde,E^{\vee}) \ar[d] \\
 & \mathcal{P}(E,\tautilde)/Q \ar@{^{(}->}[r]^(0.4){\overline{i}} & \isom(V\otimes\oxtilde,E^{\vee})/Q'\\
\xtilde \ar@{-->}[ur]^{\beta} \ar@/_1pc/[urr]_{\beta'}
}
\end{equation*}
where $Q=Q'\cap G$ and $\mathcal{P}(E,\tautilde)$ is the principal bundle over $\xtilde$ associated to $(E,\tautilde)$.

\begin{prop}
Let $\efil$ be filtration of $E$. Then $\efil$ is a $\beta$-filtration if and only if the parabolic subgroup $Q'$ associated to such filtration is stable (in the sense of Remark \ref{oss-beta-filtr} point (5)).
\end{prop}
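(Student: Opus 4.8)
The plan is to recast the defining diagram as a lifting problem and solve it by comparing the two parabolic subgroups of $Gl(V)$ attached to the filtration. By construction $\efil$ already determines the canonical reduction $\beta'\colon\xtilde\to\isom(V\otimes\oxtilde,E^{\vee})/Q'$, and being a $\beta$-filtration means exactly that $\beta'$ factors through $\overline{i}\colon\mathcal{P}(E,\tautilde)/Q\hookrightarrow\isom(V\otimes\oxtilde,E^{\vee})/Q'$, where $Q=Q'\cap G$. First I would check that $\overline{i}$ is a closed immersion: on fibres it is the orbit map $G/Q\to Gl(V)/Q'$, $gQ\mapsto gQ'$, which is injective since $Q=Q'\cap G$ and proper since $G/Q$ is complete, so its image is closed and $\beta$ exists (uniquely) iff $\beta'$ lands fibrewise in that image. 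Thus the whole statement is reduced to understanding when the flag cut out by $\efil$ is adapted to the $G$-structure $\mathcal{P}(E,\tautilde)$, and this is governed by the map $\zeta$. As a preliminary step I would record that $\zeta$ is well defined, i.e. that $Q_{Gl(V)}(\rho\circ\lambda)$ is independent of the choice of $\lambda\colon\cc^*\to G$ with $Q_G(\lambda)=Q$; this is the representation-theoretic heart and rests on the fact that any two such cocharacters induce the same weight flag on $V$.

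For the implication ($\Rightarrow$) I would use the explicit construction of $\efil_\beta$. If $\efil$ is a $\beta$-filtration it is built from some $\lambda\colon\cc^*\to G$ and a section $\beta\colon\xtilde\to\mathcal{P}(E,\tautilde)/Q_G(\lambda)$ via $E_i^{\vee}=\beta^*\mathcal{P}(E,\tautilde)\times_{\rho}V_i^{\vee}$, so its type is precisely that of the weight filtration of $\lambda$ acting on $V$. Hence $Q'=Q_{Gl(V)}(\lambda)$ with $\lambda\in G$, whence $Q=Q'\cap G=Q_G(\lambda)$; taking this same $\lambda$ in the definition of $\zeta$ gives $\zeta(Q')=Q_{Gl(V)}(\rho\circ\lambda)=Q_{Gl(V)}(\lambda)=Q'$, so $Q'$ is stable.

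For the implication ($\Leftarrow$) assume $Q'=\zeta(Q')$. Unwinding $\zeta$, there is $\lambda\in G$ with $Q_G(\lambda)=Q=Q'\cap G$ and $Q_{Gl(V)}(\lambda)=\zeta(Q')=Q'$, so the weight flag of $\lambda$ on $V$ is a reference flag for $Q'$ and in particular is compatible with the $G$-structure. I would then produce the lift as the fibre product $\beta^*\mathcal{P}(E,\tautilde)\doteqdot\mathcal{P}(E,\tautilde)\times_{\isom(V\otimes\oxtilde,E^{\vee})}\isom(V^{\bullet}\otimes\oxtilde,(\efil)^{\vee})$, and check that it is a principal $Q$-subbundle of $\mathcal{P}(E,\tautilde)$: it is manifestly stable under $Q=Q'\cap G$, and the point is that, since $\lambda$ lies in $G$, the frames in $\mathcal{P}(E,\tautilde)$ respecting the weight flag of $\lambda$ form a $Q_G(\lambda)$-torsor in each fibre, so the fibre product is fibrewise a $Q$-torsor. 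The induced section $\xtilde\to\mathcal{P}(E,\tautilde)/Q$ is the desired $\beta$, and $\overline{i}\circ\beta=\beta'$ holds by construction.

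The hard part will be exactly the interplay of the two one-parameter subgroups attached to $\efil$: the subgroup $\lambda'\colon\cc^*\to Gl(V)$ coming from the flag of $V$, and the composite $\rho\circ\lambda$ coming from the induced parabolic $Q=Q'\cap G$ of $G$. As recorded in Remark~\ref{oss-beta-filtr}, the relevant triangle need not commute and in general $Q_{Gl(V)}(\lambda')\neq Q_{Gl(V)}(\rho\circ\lambda)$; the content of the proposition is that their coincidence, which is precisely stability of $Q'$, is what forces the flag of $\efil$ to be a $G$-flag so that the $Q'$-reduction descends to a $Q$-reduction of $\mathcal{P}(E,\tautilde)$. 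I expect the genuinely delicate points to be the well-definedness of $\zeta$ and the control of the intersection $Q'\cap G$ (so that it really is parabolic and the fibre product above is fibrewise nonempty); both ultimately reduce to the statement that, for $\lambda\in G$, the flag induced on $V$ is stabilized exactly by $Q_{Gl(V)}(\lambda)$ and is adapted to the chosen $G$-structure.
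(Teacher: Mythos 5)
Your proposal follows essentially the same route as the paper's proof: stability of $Q'$ is used to identify $Q'$ with $Q''=Q_{Gl(V)}(\rho\circ\lambda)$, which makes the map $\overline{i}\colon\mathcal{P}(E,\tautilde)/Q\to\isom(V\otimes\oxtilde,E^{\vee})/Q'$ well defined, and the lift $\beta$ is then obtained from the $Q$-subbundle $I_Q=\mathcal{P}(E,\tautilde)\cap I_{Q'}$ (your fibre product is exactly the paper's $I_Q$ with $I_Q\times_{\rho}Q''=I_{Q'}$). The extra care you propose to take (closedness of $\overline{i}$, well-definedness of $\zeta$, fibrewise nonemptiness) is compatible with, and if anything more explicit than, the argument in the text, and your forward direction spells out what the paper dismisses as obvious.
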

\begin{proof}
The filtration $\efil$ gives rise to a subbundle $I_{Q'}$ of $\isom(V\otimes\oxtilde,E^{\vee})$, and so the inclusion $I_{Q'}\hookrightarrow\isom(V\otimes\oxtilde,E^{\vee})$ induces a section $\beta':\xtilde\to\isom(V\otimes\oxtilde,E^{\vee})/Q'$. We consider now the groups $Q$ and $Q''$ constructed as in Remark \ref{oss-beta-filtr} point (4) and the one-parameter subgroups $\lambda,\lambda'$ (respectively) associated to $Q$ and $Q'$. The following diagram commutes:
\begin{equation}\label{prop-beta-filtr-diagrammino}
\xymatrix{ \cc^*\ar[d]_{\lambda}\ar[r]^{\lambda'} & Gl(V)\\
G\ar[ur]_{\rho}
}
\end{equation}
in fact by hypothesis $Q'$ is stable and so $Q'=Q''$.\\
Consider now the following diagram:
\begin{equation*}
\xymatrix{ & \mathcal{P}(E,\tautilde) \ar@{^{(}->}[r]^(0.4){i} \ar[d] & \isom(V\otimes\oxtilde,E^{\vee}) \ar[d] \\
 & \mathcal{P}(E,\tautilde)/Q \ar@{^{(}-->}[r]^(0.4){\overline{i}} & \isom(V\otimes\oxtilde,E^{\vee})/Q'\\
\xtilde \ar@/_1pc/[urr]_{\beta'}
}
\end{equation*}
Note that the map $\overline{i}$ is well defined. In fact, denoting by $[\;\cdot\;]_{Q'}$ the class modulo $Q'$, the map $i$ induces $\overline{i}$ if and only if for any $q\in Q$ and for any $a'=q\ldotp a$ one has $[i(a')]_{Q'}=[i(a)]_{Q'}$. Since $i(a')=i(q\ldotp a)=\rho(q)\ldotp i(a)$, $[i(a')]_{Q'}=[i(a)]_{Q'}\Longleftrightarrow\rho(q)\in Q'$ but since $\rho(Q)\subseteq Q''=Q'$ we are done.\\

Since $\rho(Q)\subseteq Q''$ stabilize the filtration $\efil$ we can consider the subbundle $I_Q$ of $\mathcal{P}(E,\tautilde)\hookrightarrow\isom(V\otimes\oxtilde,E^{\vee})$, for which $I_Q\times_{\rho}Q''=I_{Q'}$. The inclusion $I_Q\hookrightarrow\mathcal{P}(E,\tautilde)$ induces a morphism $\beta:\xtilde\to\mathcal{P}(E,\tautilde)/Q$ that makes the following diagram commute:
\begin{equation}\label{prop-beta-filtr-diagrammone}
\xymatrix{I_{Q'} \ar@/^1pc/[drrr] \ar@/_1pc/[dddr]\\
& I_Q \ar[ul] \ar@{^{(}->}[r] \ar[dd]& \mathcal{P}(E,\tautilde) \ar@{^{(}->}[r]^{i} \ar[d] & \isom(V\otimes\oxtilde,E^{\vee}) \ar[d] \\
& & \mathcal{P}(E,\tautilde)/Q \ar@{^{(}-->}[r]^{\overline{i}} & \isom(V\otimes\oxtilde,E^{\vee})/Q'\\
& \xtilde \ar[ur]^{\beta} \ar@/_1pc/[urr]_{\beta'}
}
\end{equation}
and so we are done. Equivalently, we could show that $\text{Im}(\beta')\cap\text{Im}(\overline{i})=\text{Im}(\overline{i})$ and define $\beta=\overline{i}^{-1}\circ\beta'$.\\

The $\Rightarrow$ arrow is obvious.
\end{proof}

\begin{oss}
In the previous proposition we have shown that the following conditions are equivalent:
\begin{enumerate}
\item $\efil=\efil_{\beta}$.
\item The diagram \eqref{prop-beta-filtr-diagrammino}commutes
\item The diagram \eqref{prop-beta-filtr-diagrammone} commutes.
\item $Q'$ is stable.\\
\end{enumerate}
\end{oss}
     %
\begin{oss}\label{prop-not-max}
Let $G$ be a semisimple group, $\rho:G\to Sl(V)\subset Gl(V)$ a faithful representation and $\lambda:\cc^*\to G$ a one-parameter subgroup such that $Q_{G}(\lambda)$ is a maximal parabolic subgroup of $G$. Then the parabolic subgroup $Q_{Gl(V)}(\rho\circ\lambda)$ of $Gl(V)$ is \textbf{not} maximal. Thanks to this observation we obtain that every $\beta$-filtration $\efil_{\beta}: \; 0\subset E_1\subset\dots\subset E_s\subset E_{s+1}=E$ has lenght greater or equal than $2$, i.e. $s\geq2$.
\end{oss}

Notice that the previous remark tells us that the parabolic subgroup of $G$ associated to a $\beta$-filtration is always a proper subgroup. Therefore, according to the definition of Ramanathan, the (semi)stability condition is checked only for maximal \textit{proper} parabolic subgroups of $G$. If $G$ is reductive but not semisimple Remark \ref{prop-not-max} does not hold in general as the following example shows:

\begin{esempio}
Consider $G=Gl(k)$, $\rho:Gl(k)\to Gl(n)$ ($k<n$) the inclusion (in the left up corner) and $\lambda:\cc^*\to G$ given by
\begin{equation*}
 \lambda(z)\doteqdot\left( \begin{array}{cccc}
 z^{\gamma} & 0 & \dots & 0 \\
 0 & 1 & \ddots & \vdots \\
 \vdots & \ddots & \ddots & 0\\
 0 & \dots & 0 & 1\\
\end{array}\right).
\end{equation*}
Therefore $\lambda'=\rho\circ\lambda$ is given by
\begin{equation*}
 \lambda'(z)\doteqdot\left( \begin{array}{ccccccc}
 z^{\gamma} & 0 & \dots & & & \dots & 0\\
 0 & 1 & \ddots & & & & \vdots\\
 \vdots & \ddots & \ddots & & & & \\
  & & & 1 & & & \\
  & & & & 1 & \ddots & \vdots \\
\vdots & & & & \ddots & \ddots & 0\\
0 & \dots & & & \dots & 0 & 1
\end{array}\right).
\end{equation*}
give arise to a maximal parabolic subgroup.
\end{esempio}

If $(\efascio,\tau,\phi)$ is a honest singular principal $G$-bundle on $X$ there is an analogous definition of $\beta$ filtration (see \cite{schmitt2005singular} for more details).\\

We will now recall the notion of Higgs reduction (\cite{bruzzo2010semistable} Definition 2.3) and then we will define (semi)stability for descending principal Higgs bundles and for honest singular principal Higgs $G$-bundles and we will proof that they are equivalent.\\
Let $(E,q,\tautilde,\phitilde)$ be a descending principal Higgs bundle, $K$ a closed subgroup of $G$ and  $\beta:\xtilde\to\mathcal{P}(E,\tautilde)/K$ a reduction of the structure group to $K$. The principal $K$-bundle $\mathcal{P}(E,\tautilde)_{\beta}=\beta^*(\mathcal{P}(E,\tautilde))$ on $\xtilde$ and the principal bundle injection $i_{\beta}:\mathcal{P}(E,\tautilde)_{\beta}\to\mathcal{P}(E,\tautilde)$ induce an injective morphism of bundles $\text{Ad}(\mathcal{P}(E,\tautilde)_{\beta})\to\text{Ad}(\mathcal{P}(E,\tautilde))$. Let $\Pi_{\beta}:\text{Ad}(\mathcal{P}(E,\tautilde)\otimes\omegaxt\longrightarrow(\text{Ad}(\mathcal{P}(E,\tautilde)/\text{Ad}(\mathcal{P}(E,\tautilde)_{\beta})\otimes\omegaxt$ be the induced projection.

\begin{deff}[\textbf{Higgs reduction}]\label{def-higgs-reduction}
A section $\beta:\xtilde\to\mathcal{P}(E,\tautilde)/K$ is a Higgs reduction of $(E,q,\tautilde,\phitilde)$ if $\phitilde\in\textnormal{ker}\,\Pi_{\beta}$.
\end{deff}

Observe that if $\beta$ is a Higgs reduction and $K$ is a parabolic subgroup of $G$ the filtration $(\efil_{\beta},\alphafil_{\beta})$ is $\phitilde$-invariant, i.e., $\phitilde(E_i)\subseteq E_i$ for all indices $i$.

\subsection{Equivalence among the semistability conditions} \label{sec-equiv-of-conditions-of-semistability}

We have started with principal Higgs bundles over a nodal curve $X$ and have generalized them to (honest) singular principal Higgs bundles over $X$. For the latter there is a natural condition of (semi)stability (Definition \ref{def-semistable-honest-sing-princ-higgs-bundles}). Then we have seen that dealing with such objects is the same as dealing  with descending principal Higgs bundles over the normalization $\xtilde$ of the curve $X$. We have shown that the descending principal Higgs bundles are a special case of the \ddgpb. For all such objects one has a notion of (semi)stability, see Definitions \ref{def-semistable-ddgpb} and \ref{def-semistable-descending-higgs-bundles}. In this section we show that the previous definitions of (semi)stability are equivalent, see Proposition \ref{prop-equiv-semista-honest-singular-vs-descending} and Theorem \ref{teo-semistable-descending-vs-ddgpb}.\\
If the curve $X$ is smooth the definition of (semi)stability for honest singular principal Higgs bundles extends the classical notion of (semi)stability of principal Higgs bundles and so we give a generalization for this definition in the nodal case.\\
On the other hand, if $G=Sl(V)$, a descending principal Higgs $G$-bundle over $\xtilde$ (\cite{schmitt2005singular} pg. 218) corresponds to a generalized parabolic Higgs vector bundles over $\xtilde$ and the latter corresponds to a torsion free sheaf over $X$ with a Higgs field. Bhosle in \cite{bhosle1992generalised} gives a notion of (semi)stability for parabolic vector bundles over $\xtilde$ which we generalize to the case of parabolic Higgs vector bundles. Therefore we need to show that the two notions of (semi)stability are, in this special case, the same (Remark \ref{oss-equiv-defini-semis-per-G=Sl}). Moreover we also show that the Definition \ref{def-semistable-descending-higgs-bundles} is equivalent to the definition of (semi)stability for torsion free sheaves with a Higgs field (see Proposition \ref{prop-equiv-semist-decorated-e-torsion-free}).\\

\begin{deff}[\textbf{(Semi)stable honest singular principal Higgs $G$-bundles}]\label{def-semistable-honest-sing-princ-higgs-bundles}
A honest singular principal Higgs $G$-bundle $(\efascio,\tau,\phi)$ over $X$ is (semi)stable if and only if
\begin{equation*}
L(\efascio^{\bullet}_{\beta},\underline{\alpha}_{\beta})\doteqdot\sum_{i=1}^s \alpha_i \left(\deg\efascio\;\rk{\efascio_i}-\deg\efascio_i\;\rk{\efascio}\right)(\geq)0
\end{equation*}
for every $\phi$-invariant weighted $\beta$-filtration $(\efascio^{\bullet}_{\beta},\underline{\alpha}_{\beta}):\quad 0\subset\efascio_1\subset\dots\subset\efascio_s\subset\efascio$.
\end{deff}
\begin{oss}
In the previous definition, instead of requiring that $L(\efascio^{\bullet}_{\beta},\underline{\alpha}_{\beta})(\geq)0$ for every $\phi$-invariant weighted filtration $(\efascio^{\bullet}_{\beta},\underline{\alpha}_{\beta})$, we could require the same inequality holds for every one-parameter subgroup $\lambda:\cc^*\to G$ and for every Higgs section $\beta:X\to\mathcal{P}(X,\tau)/Q_G(\lambda)$.
\end{oss}

\begin{deff}[\textbf{(Semi)stable descending principal Higgs $G$-bundles}]\label{def-semistable-descending-higgs-bundles}
Let $\mathfrak{E}=(E,q,\tautilde,\phitilde)$ be a descending principal Higgs $G$-bundle over $\xtilde$. We say that $\mathfrak{E}$ is (semi)stable if and only if for all $\lambda:\cc^*\to G$ and for all Higgs-section $\beta:\xtilde\to\mathcal{P}(E,\tautilde)/Q_G(\lambda)$ the following inequality holds:
\begin{equation*}
P(\efil_{\beta},\alphafil_{\beta})\doteqdot\sum_{i=1}^{s}\alpha_i(\rk{E_i}\degpar(E)-\rk{E}\degpar(E_i))\geq 0.
\end{equation*}
\end{deff}


\begin{deff}[\textbf{(Semi)stable generalized parabolic Higgs vector bundles}]\label{def-semistable-par-higgs-bundles}
A generalized parabolic Higgs vector bundles $(E,q,\phitilde)$ over $\xtilde$ is (semi)stable if for all subsheaves $F$ of $E$ such that $\rest{\phi}{F}:F\to F\otimes\omegax$ the condition:
$$
\frac{\degpar(F)}{\rk{F}}\doteqdot\mupar(F)\leq\mupar(E)\doteqdot\frac{\degpar(E)}{\rk{E}}
$$
holds.
\end{deff}

\begin{oss}\label{oss-equiv-defini-semis-per-G=Sl}
If $G=Sl(V)$ then Definitions \ref{def-semistable-descending-higgs-bundles} and \ref{def-semistable-par-higgs-bundles} are equivalent, indeed if $G=Sl(V)$ then all filtrations are $\beta$-filtrations and requiring that $\beta$ is a Higgs-section is the same as requiring that the filtration is $\phitilde$-invariant.
\end{oss}

\begin{prop}\label{prop-equiv-semist-decorated-e-torsion-free}
A parabolic Higgs vector bundle $\mathfrak{H}=(E,q,\phitilde)$ over $\xtilde$ is (semi)stable if and only if the corresponding Higgs torsion free sheaf $(\efascio,\phi)$ on $X$ is (semi)stable.
\end{prop}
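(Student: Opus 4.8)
The plan is to deduce this Higgs statement from Bhosle's non-Higgs equivalence (\cite{bhosle1992generalised} Proposition 1.9), which we may freely invoke: a torsion free sheaf $\efascio$ on $X$ is (semi)stable if and only if the associated generalized parabolic bundle $(E,q)=\nu^*\efascio$ is $1$-(semi)stable. The real content of that result is a slope-matching dictionary: to each subsheaf $\mathcal{F}\subseteq\efascio$ it associates a subbundle $F\subseteq E$ (its saturated pull-back, carrying the induced parabolic structure $q|_{F}$), and conversely, in such a way that
\begin{equation*}
\frac{\deg\mathcal{F}}{\rk{\mathcal{F}}}\ (\leq)\ \frac{\deg\efascio}{\rk{\efascio}}\quad\Longleftrightarrow\quad\frac{\degpar F}{\rk{F}}\ (\leq)\ \frac{\degpar E}{\rk{E}}.
\end{equation*}
Since the Higgs (semi)stability of $\mathfrak{H}=(E,q,\phitilde)$ (Definition \ref{def-semistable-par-higgs-bundles}) only tests $\phitilde$-invariant subsheaves, while the (semi)stability of the Higgs torsion free sheaf $(\efascio,\phi)$ only tests $\phi$-invariant subsheaves, it suffices to prove that Bhosle's correspondence \emph{restricts} to a bijection between $\phi$-invariant subsheaves of $\efascio$ and $\phitilde$-invariant subsheaves of $E$. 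The displayed equivalence then matches destabilizing objects on one side with destabilizing objects on the other, and the two conditions coincide.

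First I would reduce, on both sides, to \emph{saturated} invariant subsheaves. The saturation of a $\phi$-invariant subsheaf is again $\phi$-invariant, since $\phi$ is $\ox$-linear and the saturation is governed by the generic fibre, on which invariance already holds; the same argument applies verbatim to $\phitilde$. Saturating raises the (parabolic) degree without changing the rank, hence can only tighten the slope inequalities and does not affect (semi)stability. This lets me work with subbundles, which is exactly the setting of Bhosle's dictionary, so that bijectivity of the restricted correspondence is inherited automatically once the invariance property is shown to be preserved in both directions.

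Next I would verify that invariance transfers in each direction. Invariance of a subsheaf under a Higgs field means precisely that the field carries the subsheaf into its own twist, i.e.\ that the endomorphism part preserves the subsheaf; the line-bundle factor plays no role. Using $\phitilde=\nu^*\phi$, if $\phi$ preserves $\mathcal{F}\subseteq\efascio$ then $\phitilde=\nu^*\phi$ preserves $\nu^*\mathcal{F}$, hence its saturated image $F\subseteq E$, so that $F$ is $\phitilde$-invariant. Conversely, from a $\phitilde$-invariant $F\subseteq E$ one builds $\mathcal{F}\subseteq\efascio$ by the kernel construction \eqref{eq-fascio-tor-free}, and the fact that $\phi$ is induced from $\phitilde$ by push-forward (Section \ref{sec-descending}) forces $\phi$ to preserve $\mathcal{F}$.

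I expect the main obstacle to lie in the local analysis at the node. Away from $x_0$ everything is the identification $\nu^*\efascio\cong E$ and there is nothing to check; but exactly at $x_0$ (resp.\ $x_1,x_2$) the sheaf $\efascio$ fails to be locally free and the gluing datum $q$ intervenes, so one must confirm that the kernel construction \eqref{eq-fascio-tor-free} and the saturation respect the invariance inclusion there. Concretely, one should check that the image of $\phi$ landing in $\zaza{\efascio}\otimes\omegax$ (condition \eqref{cond2-decor} of Definition \ref{def-descending}) is compatible with $q$ being stable under $\phitilde$ on the fibres $E_{x_1}\oplus E_{x_2}$, so that the invariant subobject $\mathcal{F}$ produced from $F$ is genuinely $\phi$-invariant and the restricted correspondence is bijective and not merely one-sided. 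Once this local compatibility is established, the equivalence of the two semistability notions follows from Bhosle's slope identity applied to the matched invariant subobjects.
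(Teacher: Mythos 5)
Your proposal follows the same route as the paper: invoke Bhosle's Proposition 1.9 for the slope equivalence and reduce the statement to showing that the subsheaf correspondence exchanges $\phi$-invariance and $\phitilde$-invariance, which you verify in one direction via $\phitilde=\nu^*\phi$ and in the other via the push-forward/kernel construction and the identification $\nu_*F\cap\efascio=\mathcal{F}$. Your additional remarks on saturation and on the local check at the node are sensible elaborations of points the paper's (terser) proof leaves implicit, but the argument is essentially identical.
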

\begin{proof}
We already now that the notion of (semi)stability for parabolic vector bundles over $\xtilde$ is equivalent to the (semi)stability for the associated torsion free sheaf over $X$ (\cite{bhosle1992generalised} Proposition 1.9). It remains to show that $F\subset E$ is $\phitilde$-invariant if and only if the corresponding torsion free $\mathcal{F}$ over $X$ is $\phi$-invariant.
Let us suppose that $F$ is $\phitilde$-invariant, i.e. $\rest{\phitilde}{F}:F\to F\otimes\omegaxt$. Recalling that the inclusion $\mathcal{F}\hookrightarrow\nu_*F$ gives an injective morphism $\nu^*\mathcal{F}\hookrightarrow F$, we can consider
\begin{equation*}
\rest{\phi}{\mathcal{F}}:\mathcal{F}\longrightarrow\nu_*\nu^*\mathcal{F}\stackrel{\nu_*(\phi'\mid_{\mathcal{F}})}{\longrightarrow}\nu_*F\otimes\nu_*\omegaxt.
\end{equation*}
Observing that $\nu_*F\cap\mathcal{E}=\mathcal{F}$ we are done.\\
Conversely given a $\phi$-invariant subsheaf $\mathcal{F}$, since $\phitilde=\nu^*\phi$, $E=\nu^*\efascio$ and $F=\nu^*\mathcal{F}\subseteq\nu^*\efascio=E$, we obtain that:
\begin{equation*}
\rest{\phitilde}{F}=\nu^*\rest{\phi}{\nu^*\mathcal{F}}:\nu^*\mathcal{F}\to\nu^*\mathcal{F}\otimes\nu^*\omegax,
\end{equation*}
and we are done.
\end{proof}

\begin{prop}[\textbf{Equivalence between Definitions \ref{def-semistable-honest-sing-princ-higgs-bundles} and \ref{def-semistable-descending-higgs-bundles}}]\label{prop-equiv-semista-honest-singular-vs-descending}
A honest singular principal Higgs bundle $(\efascio,\tau,\phi)$ over $X$ is (semi)stable if and only if the corresponding descending principal Higgs bundle $(E,q,\tautilde,\phitilde)$ over $\xtilde$ is (semi)stable.
\end{prop}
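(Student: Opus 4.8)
The plan is to set up a weight-preserving bijection between the $\phi$-invariant weighted $\beta$-filtrations of $(\efascio,\tau,\phi)$ on $X$ and the Higgs-reduction $\beta$-filtrations of $(E,q,\tautilde,\phitilde)$ on $\xtilde$, and then to verify that the functionals $L$ and $P$ coincide term by term under this bijection. Once this is done the equivalence of the two (semi)stability conditions is immediate, the strict and non-strict inequalities matching the stable and semistable cases respectively.

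First I would construct the bijection on reductions. Since $\nu\colon\xtilde\to X$ restricts to an isomorphism over $U\doteqdot X\smallsetminus\{x_0\}$ and, by the honesty hypothesis, $\mathcal{P}(E,\tautilde)$ is identified over $U$ with the principal bundle underlying $(\efascio,\tau)$, a reduction $\beta\colon X\to\mathcal{P}(X,\tau)/Q_G(\lambda)$ and a reduction $\beta\colon\xtilde\to\mathcal{P}(E,\tautilde)/Q_G(\lambda)$ carry the same datum over $U$. Because the fibre $G/Q_G(\lambda)$ is projective, such a section extends uniquely across the node on either side, so the two sets of reductions are in canonical bijection. Passing to the associated filtrations, the subbundles $E_i\subset E$ and the subsheaves $\efascio_i\subset\efascio$ correspond through $E_i=\nu^*\efascio_i$ and Bhosle's kernel construction \eqref{eq-fascio-tor-free}; by the argument of Proposition \ref{prop-equiv-semist-decorated-e-torsion-free} the sheaf $\efascio_i$ is $\phi$-invariant if and only if $E_i$ is $\phitilde$-invariant, and since for a parabolic $Q_G(\lambda)$ the Higgs-reduction condition $\phitilde\in\ker\Pi_\beta$ is equivalent to the $\phitilde$-invariance of $(\efil_\beta,\alphafil_\beta)$, Higgs reductions on $\xtilde$ correspond exactly to $\phi$-invariant $\beta$-filtrations on $X$. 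The weights $\alphafil_\beta=(\alpha_s,\dots,\alpha_1)$ are determined by $\lambda$ alone through $\alpha_i=(\gamma_{i+1}-\gamma_i)/r$, hence are the same on both sides.

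Next I would compare the two functionals. Bhosle's degree computation (\cite{bhosle1992generalised} Proposition 1.9), which is exactly what makes (semi)stability of a torsion-free sheaf equivalent to $1$-(semi)stability of its generalized parabolic bundle, gives $\rk{\efascio_i}=\rk{E_i}$ together with $\deg\efascio_i=\degpar(E_i)+\rk{E_i}$ for every member of the filtration, and likewise $\rk{\efascio}=\rk{E}$, $\deg\efascio=\degpar(E)+\rk{E}$ (so that the slopes differ by the constant $1$). The extra rank term differs from $\degpar$ only by a multiple of the rank, so it cancels in the antisymmetric combination: substituting these equalities into
$$L(\efascio^{\bullet}_{\beta},\underline{\alpha}_{\beta})=\sum_{i=1}^s\alpha_i\bigl(\deg\efascio\,\rk{\efascio_i}-\deg\efascio_i\,\rk{\efascio}\bigr)$$
turns each summand into $\alpha_i\bigl(\degpar(E)\,\rk{E_i}-\degpar(E_i)\,\rk{E}\bigr)$, whence $L(\efascio^{\bullet}_{\beta},\underline{\alpha}_{\beta})=P(\efil_{\beta},\alphafil_{\beta})$. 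As the weights coincide, the inequality $L(\geq)0$ for all $\phi$-invariant $\beta$-filtrations on $X$ is literally the inequality $P\geq0$ for all Higgs reductions on $\xtilde$, which is what we want.

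I expect the main obstacle to be the first step, the identification of reductions across the node: one must check that honesty really does identify $\mathcal{P}(E,\tautilde)$ with the bundle underlying $(\efascio,\tau)$ over $U$ compatibly with the quotient by $Q_G(\lambda)$, and that the unique extension of a section over $x_0$ on the $X$-side corresponds to a genuine Higgs reduction, rather than a merely set-theoretic section, on the $\xtilde$-side. By contrast the degree bookkeeping of the second step is routine once Proposition 1.9 of \cite{bhosle1992generalised} is invoked, the stability parameter having been fixed to $\alpha=1$.
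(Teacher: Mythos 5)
Your proposal is correct and follows essentially the same route as the paper: the paper's own proof simply observes that $\beta$-filtrations of $\efascio$ correspond to $\beta$-filtrations of $E$ and then invokes Proposition \ref{prop-equiv-semist-decorated-e-torsion-free}, which packages exactly your two ingredients (the equivalence of $\phi$- and $\phitilde$-invariance, and Bhosle's degree comparison making $L$ and $P$ agree term by term). You have merely unfolded the details that the paper leaves implicit; the arithmetic $\deg\efascio_i=\degpar(E_i)+\rk{E_i}$ and the cancellation of the rank terms in the antisymmetric combination check out.
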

\begin{proof}
Observing that a $\beta$-filtration $(\efascio^{\bullet}_{\beta},\underline{\alpha}_{\beta})$ of $\efascio$ corresponds to a $\beta$-filtration $(\efil_{\beta},\alphafil_{\beta})$ of $E$, the results follows immediately from Proposition \ref{prop-equiv-semist-decorated-e-torsion-free}.
\end{proof}

We recall that a \ddgpb $(E,q,\varphi_1,\varphi_2)$ is $(\delta_1,\delta_2)$-semistable if for any weighted filtration $(\filtrazione)$ of $E$ we have:

\begin{equation*}
P(\filtrazione)+\delta_1\, \munoc+\delta_2 \,\muduc\geq 0.
\end{equation*}

A flat family $\mathfrak{F}$ of isomorphism classes of vector bundles on $\xtilde$ of type $(d,r)$ is said to be \textbf{bounded} if there exists a scheme $S$ of finite type over $\cc$ and a vector bundle $E_S$ on $S\times \xtilde$ such that for every vector bundle $E$ on $\xtilde$ with $E\in\mathfrak{F}$, there exists a point $s\in S$ with $E\simeq E_S|_{\{s\}\times \xtilde}$. 

\begin{prop}
A family $\mathfrak{F}$ of isomorphism classes of vector bundles of type $(d,r)$ is bounded if and only if there exists a constant $C$ such that for any $E\in \mathfrak{F}$ we have $\mu(E')\leq \frac{d}{r}+C$ for any subbundle $E' \subset E$.
\end{prop}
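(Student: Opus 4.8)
The plan is to recognize this as the Kleiman--Grothendieck boundedness criterion on the smooth curve $\xtilde$ and to prove the two implications separately; the quantity that controls everything is $\mu_{\max}(E)=\sup_{0\neq F\subseteq E}\mu(F)$, where the supremum may be taken over subbundles since on a smooth curve the saturation of a subsheaf is a subbundle of no smaller degree.

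For the ($\Rightarrow$) direction I would argue as follows. Suppose $\mathfrak F$ is bounded, so there is a scheme $S$ of finite type and a bundle $E_S$ on $S\times\xtilde$ with every $E\in\mathfrak F$ isomorphic to a fibre. For any subbundle $E'\subset E$ one has $\mu(E')\le\mu_{\max}(E)$, so it suffices to bound $\mu_{\max}$ uniformly. The Harder--Narasimhan type is a discrete invariant that is constructible in flat families, so the function $s\mapsto\mu_{\max}((E_S)_s)$ takes only finitely many values on the Noetherian scheme $S$; in particular it is bounded above by some $M$, and $C:=M-d/r$ does the job. This direction is essentially formal.

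The ($\Leftarrow$) direction is the substantive one and I would break it into two steps. First, from $\mu(E')\le d/r+C$ for all subbundles I get $\mu_{\max}(E)\le d/r+C$ for all $E\in\mathfrak F$, and then a uniform \emph{lower} bound on $\mu_{\min}(E)$: writing the Harder--Narasimhan filtration $0=E_0\subset\dots\subset E_k=E$ with graded slopes $\mu_1>\dots>\mu_k=\mu_{\min}(E)$ and using $\sum_i\rk{\gr_i}\,\mu_i=d$ together with $\mu_i\le d/r+C$ for $i<k$, a degree count isolates the last term and yields $\mu_{\min}(E)\ge m$ for a constant $m=m(d,r,C)$. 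Second, I fix once and for all a line bundle $A$ on $\xtilde$ with $\deg A>2g-1-m$, so that $\mu_{\min}(E\otimes A)=\mu_{\min}(E)+\deg A>2g-1$ for every $E\in\mathfrak F$. On a smooth curve of genus $g$ one has the standard vanishing $\mu_{\min}(F)>2g-2\Rightarrow H^1(\xtilde,F)=0$ (by Serre duality a nonzero map $F\to\omegaxt$ would exhibit a rank-one quotient of $F$ of slope $\le 2g-2$), and hence $\mu_{\min}(F)>2g-1\Rightarrow F$ is globally generated (apply the vanishing to $F\otimes\oxtilde(-x)$ at each point $x$). Thus every $E\otimes A$ is globally generated with $h^0(E\otimes A)=\chi(E\otimes A)=d+r\deg A+r(1-g)=:P$, a constant depending only on $d,r,\deg A,g$.

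To conclude I would turn global generation into a uniform quotient presentation: the surjection $\oxtilde^{\oplus P}\twoheadrightarrow E\otimes A$ shows that every $E\in\mathfrak F$ is a quotient of the fixed bundle $(A^{\vee})^{\oplus P}$ with the fixed Hilbert polynomial $P_{d,r}$ of a bundle of type $(d,r)$. Since the Quot scheme $\mathrm{Quot}\bigl((A^{\vee})^{\oplus P},P_{d,r}\bigr)$ is of finite type over $\cc$ and carries a universal quotient, pulling this universal family back exhibits $\mathfrak F$ as bounded. The hardest part is the reverse direction, and within it the passage from the subbundle slope bound to the uniform lower bound on $\mu_{\min}$ and then to the uniform cohomological vanishing; once a single twist makes every member globally generated with a fixed number of sections, the Quot-scheme argument is routine.
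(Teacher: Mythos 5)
Your proof is correct. Note that the paper states this proposition without any proof at all --- it is quoted as the standard boundedness criterion for sheaves on a curve (Grothendieck/Kleiman, cf.\ Huybrechts--Lehn, \emph{The Geometry of Moduli Spaces of Sheaves}, Lemma 1.7.9) --- so there is no argument in the text to compare yours against; what you have written is a complete and accurate rendition of the classical proof. All the key steps check out: passing from subbundles to subsheaves via saturation is legitimate on the smooth curve $\xtilde$; the degree count on the Harder--Narasimhan graded pieces does give $\mu_{\min}(E)\geq\bigl(d-(r-\rho)(d/r+C)\bigr)/\rho$ uniformly over the finitely many possible ranks $\rho$ of the last graded piece; the vanishing $\mu_{\min}(F)>2g-2\Rightarrow H^1(F)=0$ via Serre duality and the global generation criterion are standard; and the resulting uniform presentation as quotients of the fixed bundle $(A^{\vee})^{\oplus P}$ with fixed Hilbert polynomial lands the family in a single finite-type Quot scheme. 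For the forward direction, invoking constructibility of the Harder--Narasimhan type (the Shatz stratification) is fine but slightly heavier than needed; a more elementary route is to observe that boundedness gives a uniform bound $N$ on $h^{0}(E)$, and then for any subbundle $E'\subset E$ one has $\deg E'+\rk{E'}(1-g)=\chi(E')\leq h^{0}(E')\leq h^{0}(E)\leq N$, which bounds $\mu(E')$ directly. Either way, the argument is sound.
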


\begin{prop}
The family of $(\delta_1,\delta_2)$-semistable double-decorated generalized parabolic vector bundles $(E,q,\varphi_1,\varphi_2)$ of type $(d,r,\underline{a},\underline{b},\underline{c},L_1,L_2)$ is bounded.
\end{prop}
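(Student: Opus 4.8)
The plan is to reduce everything to the slope criterion furnished by the preceding Proposition: it suffices to produce a constant $C$, depending only on the type $\tipo$, so that every subbundle $E'\subset E$ of every $(\delta_1,\delta_2)$-semistable double-decorated bundle $(E,q,\varphi_1,\varphi_2)$ in the family satisfies $\mu(E')\leq d/r+C$. So fix such an $(E,q,\varphi_1,\varphi_2)$ together with a subbundle $E'\subset E$, and write $r'\doteqdot\rk{E'}$; we may assume $1\leq r'<r$, since $E'=E$ is trivial. First I would feed into the $(\delta_1,\delta_2)$-semistability inequality the two-step weighted filtration $0\subset E'\subset E$ with weight $\alphafil=(1)$, for which $P(\filtrazione)=r'\degpar(E)-r\degpar(E')$, and then read off the slope bound. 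The only nontrivial ingredient is a uniform \emph{upper} bound for the decoration terms $\munoc$ and $\muduc$.

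Two elementary estimates do the work. For the chosen filtration the weight vector appearing in point (3) of the Remark preceding Definition~\ref{def-semistable-ddgpb} is $\underline\gamma=(\gamma_1,\dots,\gamma_r)$ with $\gamma_j=r'-r$ for $j\leq r'$ and $\gamma_j=r'$ otherwise, so every $\gamma_j\geq r'-r\geq -r$. Hence any sum of $a_i$ of the $\gamma_j$ is at least $a_i(r'-r)$, and since $\varphi_i$ is nonzero the set over which the minimum in that formula is taken is nonempty; therefore
\[
\muic\ =\ -\min\{\gamma_{j_1}+\dots+\gamma_{j_{a_i}}\}\ \leq\ a_i(r-r')\ \leq\ a_i r,\qquad i=1,2.
\]
Second, since $q$ is surjective onto $R$ one has $\degpar(E)=d-\dim R$, while $0\leq\dim q(E'_{x_1}\oplus E'_{x_2})\leq\dim R$ gives $\deg E'\leq\degpar(E')+\dim R$; here $\dim R$ is a fixed invariant of the family.

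Combining these, $(\delta_1,\delta_2)$-semistability for our filtration reads
\[
r'\degpar(E)-r\degpar(E')+\delta_1\munoc+\delta_2\muduc\ \geq\ 0,
\]
and inserting the bound on the $\mu$-terms yields $r\degpar(E')\leq r'\degpar(E)+(\delta_1 a_1+\delta_2 a_2)r$. Dividing by $r$ and adding the parabolic correction gives
\[
\deg E'\ \leq\ \tfrac{r'}{r}\,\degpar(E)+\delta_1 a_1+\delta_2 a_2+\dim R,
\]
and finally dividing by $r'\geq 1$ and using $\degpar(E)/r=d/r-\dim R/r\leq d/r$ produces
\[
\mu(E')\ \leq\ \frac{\degpar(E)}{r}+\frac{\delta_1 a_1+\delta_2 a_2+\dim R}{r'}\ \leq\ \frac{d}{r}+\bigl(\delta_1 a_1+\delta_2 a_2+\dim R\bigr).
\]
Thus one may take $C\doteqdot\delta_1 a_1+\delta_2 a_2+\dim R$, which depends only on $\tipo$, and the preceding Proposition then gives boundedness.

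The only real content is the uniform upper bound on $\munoc$ and $\muduc$; the rest is bookkeeping of the parabolic correction, harmless because $\dim R$ is fixed across the family. The point I would emphasize is that it is essential that the decoration terms enter the semistability inequality with \emph{positive} coefficients $\delta_i>0$: this is precisely what allows a uniform upper bound on $\munoc,\muduc$ (valid for every admissible filtration) to be converted into the desired upper bound on $\mu(E')$.
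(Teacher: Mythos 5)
Your proof is correct and follows essentially the same route as the paper's: test $(\delta_1,\delta_2)$-semistability on the two-step filtration $0\subset E'\subset E$, bound the decoration terms $\muic$ uniformly in terms of $a_i$ and $r$, absorb the parabolic correction (bounded by $\dim R$), and invoke the preceding slope criterion for boundedness. The only cosmetic difference is that you derive the upper bound $\muic\leq a_i(r-\rk{E'})$ directly from the explicit weight vector $\underline{\gamma}$, whereas the paper cites Schmitt's Lemma 1.8 for the estimate $|\mu_{\rho_{a,b}}(F,E)|\leq a(r-1)$; your version is in fact slightly more careful in keeping the coefficients $\delta_i$ attached to the decoration terms.
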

\begin{proof}
We know (\cite{schmitt2000universal} Lemma 1.8) that for a generic morphism $\varphi:E_{a,b}\to L$ we have 
$|\mu_{\rho_{a,b}}(F,E)|\leq a(r-1)$ for any subbundle $F\subset E$, and so, due to the semistability of $E$,
\begin{equation*}
P(F,E)\geq -(a_1(r-1))-(a_2(r-1))\doteqdot C.
\end{equation*}
Then, recalling that $\degpar(F)=\deg(F)-\dim q(F_{N_1}\oplus F_{N_2})$ we have
\begin{align*}
d\,\rk{F}-r(\deg(F)-r)&\geq(d-r)\rk{F}-r(\deg(F)-\dim q(F_{N_1}\oplus F_{N_2}))\nonumber\\
                 &=\degpar(E)\rk{F}-\degpar(F)\rk{E}\nonumber\\
                 &=P(F,E)\geq C
\end{align*}
and so
\begin{equation*}
\mu(F)\leq \frac{d+r^2-C}{r}.
\end{equation*}
\end{proof}


We saw that the family of descending principal Higgs bundles is contained in the family of double decorated vector bundles with trivial determinant. Now we want to relate the semistability concepts corresponding to the two families. Before that we need some preliminary results.

\begin{lem}\label{lemu}
Given a parabolic vector bundle $(E,q)$ with trivial determinant and morphisms $\varphi_1:E_{a_1,b_1}\to \oxtilde$, $\varphi_2:E_{a_2,b_2}\to\omegaxt$ induced respectively by morphisms $\tautilde:\symstar(E\otimes V)^G\to\oxtilde$, $\phitilde:E\to E\otimes\omegaxt$ and $\omega:\oxtilde\to\omegaxt$ as in Section \ref{secsingular}, one has
\begin{equation*}
\muno=0 \Leftrightarrow \mbox{$(\filtrazione)$ is a $\beta$-filtration}
\end{equation*}
and 
\begin{equation*}
\mudu=0 \Leftrightarrow \mbox{$(\filtrazione)$ is $\phitilde$-invariant}\\ \mbox{ i.e. $\phitilde(E_i)\subset E_i$ for any $i$}.
\end{equation*}
\end{lem}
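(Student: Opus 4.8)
The plan is to analyze the two statements separately, using in both cases the explicit formula for $\mu$ recorded in the Remark, namely
\begin{equation*}
\muic=-\min\{\gamma_{j_1}+\dots+\gamma_{j_{a_i}}\; | \; \rest{\varphi_i}{(E_{j_1}\otimes\dots\otimes E_{j_{a_i}})^{\oplus b_i}}\not\equiv0\},
\end{equation*}
together with the combinatorial description of the weight vector $\underline{\gamma}$ attached to the weighted filtration $(\filtrazione)$. Since $\det E\simeq\oxtilde$ here, the twist by $(\det E)^{\otimes -c}$ is trivial and we work with $E\ab{i}$. The key structural observation is that each $\gamma_j$ is nonnegative on the ``top'' indices and nonpositive on the ``bottom'' indices of the filtration, so $\mu_i$ is a maximum over the eigenvalues on which $\varphi_i$ does not vanish, and $\mu_i\geq 0$ always; the content of the lemma is to identify exactly when equality $\mu_i=0$ forces the expected invariance.

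For the second equivalence I would argue as follows. The map $\varphi_2$ is built from $(\phitilde,\omega)$ via the representation $\rho'$ on $\zaza{V}\oplus\cc$, so by point (4) of the Remark $\mudu$ equals $\mudus$, the $\mu$ of the section $[\phitilde,\omega]$ for $\rho'$. First I would note that the summand $\omega:\oxtilde\to\omegaxt$ corresponds to the one-dimensional piece $\bigwedge^{\dim V}V$, which carries weight $0$ for any one-parameter subgroup landing in $Sl(V)$ (since $\sum\gamma_j=0$); hence $\omega$ alone already contributes the eigenvalue $0$, giving $\mudus\geq 0$, and moreover this summand is never killed because $\omega\not\equiv 0$. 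Consequently $\mudus=0$ if and only if the endomorphism part $\phitilde$ contributes no negative eigenvalue, i.e.\ $\phitilde$ lands in the nonnegatively-weighted part of $\zaza{E}$ with respect to the filtration. Translating the weights on $\zaza{V}$ (differences $\gamma_k-\gamma_l$) back to the filtration, ``no negative weight on which $\phitilde$ is supported'' is precisely the condition that $\phitilde$ preserves each $E_i$, which is the $\phitilde$-invariance $\phitilde(E_i)\subseteq E_i$. The forward and backward directions then both reduce to this weight bookkeeping on $\zaza{E}$.

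For the first equivalence, I would exploit the characterization proved just above in the Proposition: a filtration $\efil$ is a $\beta$-filtration exactly when the associated parabolic $Q'\subset Gl(V)$ is stable, equivalently when the parabolic subgroup reduction factors through $\mathcal{P}(E,\tautilde)$, equivalently when $\tautilde$ is compatible with the flag. The quantity $\muno$ measures, via the same minimum formula applied to $\varphi_1$ (the decoration encoding $\tautilde$), the failure of $\tautilde$ to be supported on the nonnegative-weight part of $\bigoplus_i(E\otimes V)^{\otimes i}$. The nonnegativity $\muno\geq 0$ is automatic, and vanishing means $\tautilde$ is invariant under $Q'$, i.e.\ the section $\sigma$ (equivalently $\beta'$) reduces to $\mathcal{P}(E,\tautilde)/Q'$ through the $G$-reduction, which by the Proposition is exactly the assertion that the flag is a $\beta$-filtration. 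I would make this precise by unwinding $\varphi_1$ through the surjection $\bigoplus_i(E\otimes V)^{\otimes i}\to\symstar(E\otimes V)^G$ and checking that vanishing of the minimal weight on its support is equivalent to $\rho(Q)\subseteq Q'$ stabilizing the flag, i.e.\ $Q'=\zeta(Q')$.

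The main obstacle I anticipate is the bookkeeping in the first equivalence: one must show that the weight-minimum over the generators of the invariant algebra on which $\varphi_1$ is nonzero vanishes precisely when the induced parabolic $Q'$ coincides with $Q''=\zeta(Q')$, i.e.\ is stable. This requires carefully tracking how the homogeneity of the representation $\bigoplus_i V^{\otimes iN_i}$ and the splitting $E\ab{1}=\bigoplus_i E^{\otimes iN_i}\oplus W$ (with $\varphi_1|_W\equiv 0$) interact with the eigenspace decomposition induced by $\lambda$, and then invoking the Proposition to convert the group-theoretic stability of $Q'$ into the existence of the reduction $\beta$. The second equivalence, by contrast, is essentially a direct computation with the two-block weight structure of $\zaza{V}\oplus\cc$ and should go through cleanly once the contribution of $\omega$ is isolated.
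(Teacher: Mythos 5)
The paper does not actually prove this lemma: it simply cites Proposition 4.2.2 of \cite{schmitt2005singular} for the first equivalence and Section 3.6 of \cite{schmitt2000universal} for the second, so your attempt necessarily goes beyond what the authors wrote. Your treatment of the \emph{second} equivalence is essentially correct and is the standard computation: since $\sum_j\gamma_j=0$, the summand $\langle\omega\rangle\cong\bigwedge^{\dim V}V$ carries weight zero, and $\omega\not\equiv 0$ guarantees that this weight occurs in the support, so $\mudu\geq 0$ with equality exactly when no component of $\phitilde$ of the wrong sign survives; because the weights are constant on the graded pieces and strictly increase across the steps of the filtration, this is precisely $\phitilde(E_i)\subseteq E_i$ for all $i$.

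The \emph{first} equivalence, however, contains a genuine gap. You assert that $\muno\geq 0$ is ``automatic'' and that $\muno=0$ reduces, after ``bookkeeping,'' to the stability $Q'=\zeta(Q')$ of the associated parabolic. Neither claim follows from the weight formula alone. Unlike $\varphi_2$, the decoration $\varphi_1$ has no built-in weight-zero summand on which it is guaranteed not to vanish: $\tautilde$ is unital, but the unit lives in degree zero, which is not part of $\bigoplus_{i=1}^s(E\otimes V)^{\otimes i}$. Nonnegativity therefore is not free; it uses the honesty hypothesis, namely that $\sigma(x)$ lies in $\isom(V\otimes\oxtilde,E^{\vee})/G$ and hence on a closed $\rho(G)$-orbit inside the affine quotient $\home(V\otimes\oxtilde,E^{\vee})//G$. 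Likewise, the passage from ``the minimal $\lambda$-weight in the support of $\tautilde$ vanishes'' to ``$Q'$ is stable, equivalently $\beta'$ factors through $\mathcal{P}(E,\tautilde)/Q$'' is exactly the substantive content of Schmitt's Proposition 4.2.2: one must determine for which one-parameter subgroups $\lambda$ of $Sl(V)$ the limit of $\lambda(z)\cdot\sigma(x)$ as $z\to\infty$ remains in the closed stratum $\isom(V\otimes\oxtilde,E^{\vee})/G$, and show that this happens precisely when $\lambda$ is compatible with the isotropy group $\rho(G)$ of $\sigma(x)$, i.e.\ when the flag arises from a reduction to a parabolic subgroup of $G$. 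Your plan correctly identifies the target statement and correctly offloads the last step to the paper's Proposition on stable parabolics, but the crucial middle equivalence is asserted rather than proved, so as written the first half of the lemma is not established.
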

\begin{proof}
See \cite{schmitt2005singular} Proposition 4.2.2 for the first equivalence and \cite{schmitt2000universal} Section 3.6 for the second.
\end{proof}

\begin{prop}\label{prhiggs}
The family of semistable descending principal Higgs bundles is bounded.
\end{prop}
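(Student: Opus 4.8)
The plan is to verify the numerical boundedness criterion of the preceding Proposition directly: since every semistable descending principal Higgs bundle $(E,q,\tautilde,\phitilde)$ has trivial determinant, so $\deg E=0$, it suffices to produce a single constant $C=C(\rho,r,\deg\omegaxt)$, independent of the bundle, with $\mu(E')\le C$ for every subbundle $E'\subset E$. I stress that one cannot simply embed this family into a $(\delta_1,\delta_2)$-semistable family of double-decorated bundles and quote its boundedness: for a fixed decoration type the terms $\munoc,\muduc$ are bounded (by $a_i(r-1)$), whereas $-P(\filtrazione)$ grows with the instability of $E$, so no fixed pair $(\delta_1,\delta_2)$ can absorb an arbitrarily unstable $E$. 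Boundedness must therefore be established \emph{before}, not after, the comparison of (semi)stability notions.

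First I would reduce from arbitrary subbundles to $\phitilde$-invariant ones. Given any $E'\subset E$, its $\phitilde$-saturation, namely the smallest subsheaf containing $E'$ and stable under $\phitilde:E\to E\otimes\omegaxt$, is $\phitilde$-invariant and has slope at least $\mu(E')-c_0$ for a constant $c_0=c_0(r,\deg\omegaxt)$, because each application of $\phitilde$ twists only by the fixed line bundle $\omegaxt$. As $\omegaxt$ is fixed this costs a universal additive constant, so it is enough to bound $\mu(F)$ for $\phitilde$-invariant subsheaves $F$.

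Next, I would convert $\phitilde$-invariant subsheaves into Higgs $\beta$-filtrations, where Definition \ref{def-semistable-descending-higgs-bundles} applies. The maximal $\phitilde$-invariant destabilising subsheaf of $(E,\phitilde)$ is canonical and determines a $\phitilde$-invariant reduction of $\isom(V\otimes\oxtilde,E^{\vee})$ to a parabolic subgroup $Q'\subset Gl(V)$; intersecting with $G$ as in Remark \ref{oss-beta-filtr}(\ref{oss1-beta-filtr}) yields a parabolic $Q=Q'\cap G\subset G$ and a section $\beta:\xtilde\to\mathcal{P}(E,\tautilde)/Q$ which by construction is a Higgs reduction. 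The associated weighted filtration $(\efil_\beta,\alphafil_\beta)$ is then admissible in Definition \ref{def-semistable-descending-higgs-bundles}, so $P(\efil_\beta,\alphafil_\beta)\ge0$; since $\deg E=0$ this inequality bounds $\degpar$, and hence $\deg$, of each step $E_i$ from above. Finally the discrepancy between the slopes read off the $Gl(V)$-flag and those of the genuine $G$-reduction is governed solely by the weights of $\rho$ (cf. Remark \ref{prop-not-max}), contributing a further universal constant. Collecting the three universal constants produces $C$, and the criterion yields boundedness.

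The main obstacle is the third step: guaranteeing that the canonical $\phitilde$-invariant destabilising subsheaf of the underlying $Sl(V)$-bundle actually descends, via $Q'\cap G$, to a Higgs $\beta$-filtration of the $G$-bundle with a degree defect controlled uniformly by $\rho$ alone. This is the Ramanathan-type comparison between $Sl(V)$-instability and $G$-instability in the Higgs setting. For $G=Sl(V)$ it is automatic, since every filtration is a $\beta$-filtration (Remark \ref{oss-equiv-defini-semis-per-G=Sl}), and the statement reduces via Proposition \ref{prop-equiv-semist-decorated-e-torsion-free} to the classical boundedness of semistable Higgs sheaves; but for general reductive $G$ one must track the canonical reduction carefully, and this is where I expect the bulk of the technical work to lie.
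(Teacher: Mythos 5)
Your proposal follows essentially the same route as the paper: the paper's proof is a two-line citation of Nitsure's boundedness of semistable Higgs vector bundles together with the argument of Proposition 4.12 of Ramanathan comparing $G$-semistability with the instability of the associated vector bundle, which are exactly the two ingredients (the $\phitilde$-saturation/Harder--Narasimhan step and the Ramanathan-type reduction to a Higgs $\beta$-filtration) that you sketch. The technical crux you flag --- descending the canonical destabilising reduction from $Gl(V)$ to $G$ --- is precisely the step the paper delegates to Ramanathan, so your outline is consistent with, and more explicit than, the paper's own proof.
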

\begin{proof}
Since the family of semistable Higgs vector bundles is bounded (see \cite{nitsure1991moduli}), then, following the idea of the proof of Proposition 4.12 in \cite{ramanathan1996moduli-1}, one easily sees that the family of semistable Principal Higgs $G$-bundles is bounded when $G$ is semisimple.
\end{proof}

\begin{teo}[\textbf{Equivalence between Definition \ref{def-semistable-ddgpb} and \ref{def-semistable-descending-higgs-bundles}}]\label{teo-semistable-descending-vs-ddgpb}
Given a descending principal Higgs bundle and a nonzero section $\omega:\oxtilde\to \omegaxt$ there exists $\delta$ such that for any $\delta_1,\delta_2\geq \delta$ the following conditions are equivalent:
\begin{enumerate}
\item[i)]
For any $\phitilde$-invariant $\beta$-filtration $(E^\bullet_{\beta},\underline{\alpha})$ one has

\begin{equation*}
P(E^\bullet_{\beta},\underline{\alpha})\geq 0
\end{equation*}

\item[ii)]
For any filtration $(\filtrazione)$, 
\begin{equation}\label{semistabilityddgpb}
P(\filtrazione)+\delta_1\muno+\delta_2\mudu\geq 0,
\end{equation}
where $\varphi_1$ and $\varphi_2$ are defined as in Lemma \ref{lemu}.
\end{enumerate}
\end{teo}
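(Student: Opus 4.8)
The plan is to prove the equivalence by exploiting Lemma \ref{lemu}, which translates the two decoration terms into geometric conditions. The key observation is that $\muno$ and $\mudu$ are always nonnegative (they are maxima of $\mu$-values, and by homogeneity of the representations one can normalize so that $0$ is achievable), and by Lemma \ref{lemu} they vanish \emph{exactly} on $\beta$-filtrations and $\phitilde$-invariant filtrations respectively. So condition (i) quantifies $P$ only over the filtrations where both decoration terms vanish, while condition (ii) adds large positive penalties $\delta_1\muno+\delta_2\mudu$ precisely on those filtrations that fail one of the two geometric conditions. The heart of the argument is that for $\delta_1,\delta_2$ large enough these penalties dominate any possible negativity of $P$.

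\textbf{First} I would establish the direction (ii) $\Rightarrow$ (i), which is the easy one: given a $\phitilde$-invariant $\beta$-filtration $(E^\bullet_\beta,\underline\alpha)$, Lemma \ref{lemu} gives $\muno=0$ and $\mudu=0$, so \eqref{semistabilityddgpb} collapses to $P(E^\bullet_\beta,\underline\alpha)\geq 0$, which is exactly (i). Note this direction holds for \emph{every} $\delta_1,\delta_2$, so no lower bound on $\delta$ is needed here.

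\textbf{The substantive direction} is (i) $\Rightarrow$ (ii), and here I would argue by boundedness. Assume (i). For an arbitrary weighted filtration $(\filtrazione)$ there are two cases. If it happens to be both a $\beta$-filtration and $\phitilde$-invariant, then both decoration terms vanish and (i) already gives $P\geq 0$, so \eqref{semistabilityddgpb} holds. Otherwise at least one of $\muno,\mudu$ is strictly positive; since these are integers (weights $\gamma_j$ are integers), each such positive value is $\geq 1$. The only thing that can make \eqref{semistabilityddgpb} fail is a large negative $P(\filtrazione)$, so I must bound $|P(\filtrazione)|$ from below by a constant independent of the filtration. This is where I would invoke the boundedness results: by Proposition \ref{prhiggs} the family of semistable descending principal Higgs bundles is bounded, and by the preceding boundedness proposition this yields a uniform constant $C$ with $\mu(E')\leq d/r + C$ for all subbundles, whence (as in the proof that the family of $(\delta_1,\delta_2)$-semistable double-decorated bundles is bounded) one gets a lower bound $P(\filtrazione)\geq -C'$ with $C'$ depending only on $(d,r,\underline a,\underline b)$ and not on the particular filtration. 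Choosing $\delta\geq C'$ then forces $P+\delta_1\muno+\delta_2\mudu \geq -C' + \delta \geq 0$ whenever at least one decoration term is $\geq 1$.

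\textbf{The main obstacle} I expect is making the uniform bound $P(\filtrazione)\geq -C'$ genuinely independent of the filtration, not merely of the subbundle: the standard boundedness estimate controls $\mu(E')$ for a single step, but a weighted filtration has several steps $0\subset E_1\subset\cdots\subset E_s\subset E$ with weights $\alpha_j$, and one must check that normalizing the weights (so that $\sum\alpha_j\rk{E_j}(r-\rk{E_j})$ is controlled) keeps $P$ bounded below uniformly. I would handle this by reducing to the one-step estimate via the expression $P(\filtrazione)=\sum_j \alpha_j P(E_j,E)$ and applying the bound termwise, using the semistability hypothesis from (i) restricted to the honest/Higgs-invariant steps together with the uniform constant from boundedness. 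Once this reduction is in place the choice of $\delta$ is immediate, completing the proof.
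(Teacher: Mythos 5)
Your proposal follows essentially the same route as the paper: (ii)$\Rightarrow$(i) via Lemma \ref{lemu}, and (i)$\Rightarrow$(ii) via the boundedness of Proposition \ref{prhiggs}, which gives a uniform lower bound on $P(\filtrazione)$, combined with the observation that at least one of $\muno$, $\mudu$ is bounded below by a positive constant whenever the filtration fails to be a $\phitilde$-invariant $\beta$-filtration, so that a large enough $\delta$ absorbs the negativity. The only cosmetic difference is that the paper quotes the explicit lower bounds $\muno\geq 1$ and $\mudu\geq r\alpha$ from Schmitt instead of your integrality argument; the case analysis and the choice of $\delta$ are otherwise identical.
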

\begin{proof}
Let $(0\subset F\subset E,(1))$ be a $\phitilde$-invariant $\beta$-filtration. By Lemma \ref{lemu} 
\begin{equation*}
\muno=\mudu=0,
\end{equation*}
so $P(0\subset F\subset E,(1))\geq 0$.\\

Conversely, by Proposition \ref{prhiggs} there exists a constant $C$ such that $\mu(F)\leq \mu(E) +C$ for any $F\subset E$, and so for any weighted filtration $(\filtrazione)$,
\begin{equation*}
 P(\filtrazione)=\sum_{i=1}^s \alpha_i(\degpar(E)\,\rk{E_i}-r\,\degpar(E_i))\geq -\alpha r(r-1)C,
\end{equation*}
where $\alpha$ is $\max\{\alpha_i|i=1\dots s\}$. If the filtration is a $\phitilde$-invariant $\beta$-filtration then we are done. Otherwise, if the filtration is not $\phitilde$-invariant, one has $\mudu\geq r\alpha$, and if it is not a $\beta$-filtration, one has $\muno\geq 1$ (\cite{schmitt2000universal} Lemma 3.14). So if we choose $\delta=\max\{-Cr\alpha(r-1),-C(r-1)\}$ we obtain
\begin{equation*}
\begin{aligned}  P(\filtrazione)& +\delta_1\muno+\delta_2\mudu\\
                & \geq -\alpha r (r-1)C +\delta_1 \epsilon_1(\filtrazione) +\delta_2 \epsilon_2(\filtrazione)\geq 0
\end{aligned}
\end{equation*}
where 
\begin{equation*}
\epsilon_1(\filtrazione)=\begin{cases}  0 \quad &\mbox{if $(\filtrazione)$ is a $\beta$-filtration}\\
1 \quad &\mbox{otherwise} 
\end{cases}
\end{equation*}
and 
\begin{equation*}
\epsilon_2(\filtrazione)=\begin{cases}  0 \quad &\mbox{if $(\filtrazione)$ is a $\phitilde$-invariant}\\
1 \quad &\mbox{otherwise.} 
\end{cases}
\end{equation*}
Since $(\filtrazione)$ is not a $\phitilde$-invariant $\beta$-filtration, $\epsilon_1$ and $\epsilon_2$ cannot both be zero, the inequality \eqref{semistabilityddgpb} holds.
\end{proof}

\subsection{Families of \ddgpb}\label{sec-famiglie}

We want to define the concept of family of \ddgpb. We start with the notion of isomorphism between two \ddgpb.
\begin{deff}[\textbf{Isomorphism between \ddgpb}]
Let $(E,q,\varphi_1,\varphi_2)$ and \newline $(E',q',\varphi_1',\varphi_2')$ be double decorated generalized parabolic bundles with decoration of type $(d,r,\underline{a},\underline{b},\underline{c},L_1,L_2)$. They are isomorphic if and only if there exists an isomorphism of vector bundles $f:E\to E'$ such that the following diagrams commute:
\begin{equation*}\xymatrix{E_{x_1}\oplus E_{x_2} \ar[d]_{f}\ar[r]^(0.6){q} & R & \qquad E\abc{i}\ar[d]_{f\abc{i}}\ar[r]^(0.6){\varphi_i} & L_i\\
E'_{f(x_1)}\oplus E'_{f(x_2)} \ar[ur]_{q'} & \qquad & E'\abc{i} \ar[ur]_{\varphi_i'}
}
\end{equation*}
for $i=1,2$.
\end{deff}
\begin{deff}[\textbf{Family of \ddgpb}]
A family of double decorated generalized parabolic bundles of type $(d,r,\underline{a},\underline{b},\underline{c},L_1,L_2)$ parametrized by a scheme $S$ is a quadruple $(E_S,q_S,\varphi_{1_S},\varphi_{2_S})$ such that
\begin{itemize}
 \item[-] $E_S$ is a vector bundle over $\xtilde\times S$;
 \item[-] $q_S:\overline{\pi}_{S*}\rest{(E_S)}{\{x_1,x_2\}\times S}\longrightarrow R_S$, where $\overline{\pi}_{S}:\{x_1,x_2\}\times S\longrightarrow \{x_0\}\times S$ and $R_S$ is a vector bundle over $S$ of rank $r$;
 \item[-] $\varphi_{i_S}: (E_S)\abc{i}\to \pi_{\xtilde}^*L_i$ is a homomorphism such that $\rest{\varphi_{i_S}}{\{s\}\times\xtilde}\not\equiv0$ for $i=1,2$.
\end{itemize}
Moreover the pair $(E_S,q_S)$ is called a family of generalized parabolic vector bundles parametrized by $S$. For more details see \cite{schmitt2005singular} Section 2.3.\\

We will say that two families $(E_S,q_S,\varphi_{1_S},\varphi_{2_S})$ and $(E_S',q_S',\varphi_{1_S}',\varphi_{2_S}')$ are isomorphic if there exists an isomorphism of vector bundles $f_S:E_S\to E_S'$ such that the following diagrams commute:
\begin{equation*}\xymatrix{\overline{\pi}_{S*}(E_S)_{\{x_1,x_2\}\times S} \ar[d]_{\overline{\pi}_{S*}(f_S)}\ar[r]^(.6){q_S} & R_S & \; & (E_S)\abc{i}\ar[d]_{(f_S)\abc{i}}\ar[r]^(.5){\varphi_{i_S}} & \pi_{\xtilde}^*L_i\\
\overline{\pi}_{S*}(E_S')_{f_S(\{x_1,x_2\}\times S)} \ar[ur]_{q_S'} & & \; & (E_S')\abc{i} \ar[ur]_{\varphi_{i_S}'} & & 
}
\end{equation*}
for $i=1,2$.
\end{deff}

\section{Moduli space}\label{secmodulispace}
Given a descending principal Higgs bundle $\mathfrak E=(E,q,\tautilde,\phitilde)$ on $\xtilde$, if $\tautilde:\symstar(E\otimes V)^G\to\oxtilde$ is zero then $\mathfrak E$ is nothing but a parabolic Higgs vector bundle. These objects are very close to the Higgs vector bundles studied by Simpson in \cite{simpson1994moduli}; on the other hand if $\phitilde:E\to E\otimes \omegaxt$ is zero we get a parabolic principal bundle on a smooth curve and the moduli space of these objects was studied by Schmitt (see \cite{schmitt2005singular}). So the non-trivial case is when $\tautilde$ and $\phitilde$ are both non-zero.\\
As said before, we can consider the family of descending principal Higgs bundles as a subfamily of double decorated generalized parabolic vector bundles with $\det E\simeq\oxtilde$, and for what we saw above, we can assume that $\varphi_1$ and $\varphi_2$ are non-zero morphisms.
If we fix $D\doteqdot\det E$, a morphism $\varphi:E\abc{}\to L$ induces a morphism, that we still call $\varphi$, from $E\ab{}$ to $L\otimes D^{\otimes c}\doteqdot L_{D}$.\\
Let $\tipo\doteqdot(D,r,\underline{a},\underline{b},\underline{c},L_1,L_2)$. The objects we want to classify are quadruples $(E,q,\varphi_1,\varphi_2)$ of type $\tipo$ where $\varphi_i:E\ab{i}\to L_{iD}$ are non-zero morphisms for $i=1,2$.\\
Now we can define functors:\\
\begin{align*}
\moduli_{\tipo}:  & \text{Sch}_{\cc}\to \text{Sets}\\
& \; S\longmapsto\left\lbrace\begin{array}{c}
                  \text{Isomorphism classes of families of}\\
                  (\delta_1,\delta_2)\text{-(semi)stable generalized parabolic}\\
                  \text{vector bundles on $\xtilde$} \\ 
                  \text{of type } \tipo=(D,r,\underline{a},\underline{b},\underline{c},L_1,L_2)\\
                  \text{parametrized by }S
                  \end{array}\right\rbrace
\end{align*}

In order to solve the moduli problem we want to find a scheme $\modulispacess_{\tipo}$ that co-represents the functor $\moduliss_{\tipo}$ and a scheme $\modulispaces_{\tipo}$ that represents the functor $\modulis_{\tipo}$.\\
For this purpose we need to generalize our objects. Indeed, recall that giving a non-zero morphism $\varphi:E_{\rho}\to L$ is the same as giving a section $\sigma :X\to \mathbb{P}(E_{\rho})$. Therefore the morphisms $\varphi_i:E\ab{i}\to L_i\otimes D^{\otimes c_i}$ correspond to morphisms $\sigma_i:X\to \mathbb{P}(E\ab{i})$ for $i=1,2$.\\
Now consider the Segre embedding:
\begin{equation}\label{segre}
\sigma:X\stackrel{(\sigma_1,\sigma_2)}{\longrightarrow}\mathbb{P}(E\ab{1})\times\mathbb{P}(E\ab{2})\to\mathbb{P}(E_{\chi})
\end{equation}
where $\chi=\rho\ab{1}\otimes \rho\ab{2}$ is a homogeneous representation. Observing that $\chi=\rho_{a_1+a_2,b_1b_2}\doteqdot\rho\ab{}$ for $a=a_1+a_2$ and $b=b_1b_2$, one has that $\sigma$ induces a morphism $\varphi:E\ab{}\to L$ for $L=L_{1D}\otimes L_{2D}\simeq L_1\otimes L_2\otimes D^{\otimes (c_1+c_2)}$. So a quadruple $(E,q,\varphi_1,\varphi_2)$ can be viewed as a decorated generalized parabolic vector bundle, with just one decoration. These objects were widely studied by Schmitt in \cite{schmitt2005singular}, where he constructed their moduli spaces with respect to the following definition of (semi)stability:
\begin{deff}[\textbf{(Semi)stable decorated bundles}]\label{def-semistable-decorated}
Fix $\delta\in\qq_{>0}$. A decorated generalized parabolic vector bundle $(E,q,\varphi)$ where $\varphi:E_{a,b}\to L$ is $\delta$-(semi)stable if and only if 
\begin{equation*}
P(\filtrazione)+\delta \mu_{\rho_{a,b}}(\filtrazione;\varphi)(\geq) 0.
\end{equation*}
\end{deff} 

Now thanks to \eqref{segre} a $\ddgpb$ can be viewed as a decorated generalized parabolic bundle and so we have two definitions of (semi)stability and we have to show that they agree.

\begin{teo}
For $(E,q,\varphi_1,\varphi_2)$ and $\varphi$ defined as before the following conditions are equivalent:
\begin{enumerate}
\item For any weighted filtration $(\filtrazione)$ \\
$P(\filtrazione)+\delta_1\muno+\delta_2\mudu \geq 0$
\item For any weighted filtration $(\filtrazione)$\\
$P(\filtrazione)+\delta\mu\ab{}(\filtrazione ;\varphi)\geq 0$,
\end{enumerate}
where $a=a_1+a_2$, $b=b_1b_2$ and $\delta=\delta_1=\delta_2\in \mathbb{Q}_>0$.
\end{teo}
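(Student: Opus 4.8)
The plan is to reduce the entire statement to a single additivity identity for the $\mu$-invariants, namely
\[
\mu_{\rho_{a,b}}(\filtrazione;\varphi)=\muno+\mudu ,
\]
valid for every weighted filtration $(\filtrazione)$. Granting this, setting $\delta=\delta_1=\delta_2$ makes the two expressions
\[
P(\filtrazione)+\delta\,\mu_{\rho_{a,b}}(\filtrazione;\varphi)
\qquad\text{and}\qquad
P(\filtrazione)+\delta_1\muno+\delta_2\mudu
\]
literally equal, so conditions (1) and (2) hold simultaneously for each $(\filtrazione)$ and the equivalence follows at once. Thus the whole proof concentrates on the identity above, which is a statement about how $\mu$ behaves under the Segre embedding \eqref{segre}.

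To establish the identity I would first prove the corresponding \emph{pointwise} equality. Fix $\lambda$ and a point $x\in\xtilde$, and choose eigenbases $\{v^{(i)}_j\}_j$ of $\lambda$ on $V_{a_i,b_i}$ with weights $\gamma^{(i)}_j$; as recalled above one has $\mu_{\rho_{a_i,b_i}}(\lambda;\sigma_i(x))=-\min_j\{\gamma^{(i)}_j\mid(\sigma_i(x))(v^{(i)}_j)\neq0\}$, viewing $\sigma_i(x)$ as an element of $V_{a_i,b_i}^{\vee}$. Under Segre, $\sigma(x)=\sigma_1(x)\otimes\sigma_2(x)\in\bigl(V_{a_1,b_1}\otimes V_{a_2,b_2}\bigr)^{\vee}=V_{a,b}^{\vee}$, using $\chi=\rho_{a_1,b_1}\otimes\rho_{a_2,b_2}=\rho_{a,b}$ with $a=a_1+a_2$, $b=b_1b_2$. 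The eigenvectors of $\lambda$ on $V_{a,b}$ are the tensors $v^{(1)}_{j_1}\otimes v^{(2)}_{j_2}$, of weight $\gamma^{(1)}_{j_1}+\gamma^{(2)}_{j_2}$, and
\[
\sigma(x)\bigl(v^{(1)}_{j_1}\otimes v^{(2)}_{j_2}\bigr)=\sigma_1(x)(v^{(1)}_{j_1})\cdot\sigma_2(x)(v^{(2)}_{j_2}),
\]
which is nonzero exactly when both factors are. Hence the minimizing set is a product, so
\[
\min\bigl\{\gamma^{(1)}_{j_1}+\gamma^{(2)}_{j_2}\bigr\}=\min_{j_1}\{\gamma^{(1)}_{j_1}\}+\min_{j_2}\{\gamma^{(2)}_{j_2}\},
\]
the minima on the right being taken over the nonvanishing indices; negating yields $\mu_{\rho_{a,b}}(\lambda;\sigma(x))=\mu_{\rho_{a_1,b_1}}(\lambda;\sigma_1(x))+\mu_{\rho_{a_2,b_2}}(\lambda;\sigma_2(x))$.

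It remains to pass from this pointwise statement to the global invariants $\mu(\lambda;\sigma)=\max_{x}\mu(\lambda;\sigma(x))$. Since $\xtilde$ is the normalization of an \emph{irreducible} curve, it is irreducible, so by the first point of the Remark each of the functions $x\mapsto\mu(\lambda;\sigma_i(x))$ and $x\mapsto\mu(\lambda;\sigma(x))$ is constant on the dense open locus where the corresponding section is defined, i.e.\ where $\varphi_i\neq0$, respectively where both $\varphi_1,\varphi_2\neq0$. Evaluating the pointwise identity at a common generic point at which $\varphi_1$ and $\varphi_2$ are both nonzero, and taking the (constant) maxima, gives the global identity. This is consistent with the conceptual fact that under Segre the line bundle $\mathcal{O}_{\pp(E_{\chi})}(1)$ restricts to $\mathcal{O}(1)\boxtimes\mathcal{O}(1)$, so that the linearization weight at the product limit point $\lim_{z\to\infty}\lambda(z)\cdot\sigma(x)$ is the sum of the two factorwise weights.

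The only delicate point, which is bookkeeping rather than a genuine obstacle, is justifying the factorization of the minimizing set: one must check that the eigenbases are chosen compatibly with the tensor decomposition $V_{a,b}=V_{a_1,b_1}\otimes V_{a_2,b_2}$ and that the weights add correctly under $\rho_{a_1,b_1}\otimes\rho_{a_2,b_2}=\rho_{a,b}$. Because all twists by $\det E$ are trivial in this setting ($\det E\simeq\oxtilde$), no correction from the $c$-parameters enters, and the identity — hence the equivalence of the two semistability conditions for $\delta=\delta_1=\delta_2$ — is clean.
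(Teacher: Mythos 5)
Your proposal is correct and follows essentially the same route as the paper: both reduce the theorem to the additivity $\mu_{\rho_{a,b}}(\filtrazione;\varphi)=\muno+\mudu$, proved by diagonalizing $\lambda$ on $V_{a_1,b_1}$ and $V_{a_2,b_2}$ and using that $\sigma(x)(v^1_{j_1}\otimes v^2_{j_2})=\sigma_1(x)(v^1_{j_1})\cdot\sigma_2(x)(v^2_{j_2})$, so that the non-vanishing locus is a product and the minimum of the sum of weights splits. Your explicit handling of the passage from the pointwise equality to the global $\max_x$ (via irreducibility of $\xtilde$) is a small point the paper leaves implicit, but it does not change the argument.
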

\begin{proof}
We denote $V_{a_1,b_1}$ and $V_{a_2,b_2}$ by $V^1$ and $V^2$, respectively. We choose bases $\{v^1_i\}_{i\in I}$ and $\{v^2_j\}_{j\in J}$ of $V^1$ and $V^2$ such that the action of $\lambda$ is diagonal, i.e. $\lambda(z)_{\cdot}(v^1_i)=z^{\gamma^1_i}v^2_i$ and $\lambda(z)_{\cdot} (v^2_j)=z^{\gamma^2_j}v^2_j$ for any $z\in \cc^*$, $i=1,\dots, \dim(V^1)$ and $j=1,\dots, \dim(V^2)$. Moreover we suppose that $\gamma^1_1 \leq , \dots,\leq \gamma^1_{s_1}$ and $\gamma^2_1\leq\dots,\leq \gamma^2_{s_2}$. By construction the fiber of $E\ab{}$ is $V^1\otimes V^2$ and we have $\lambda(z)_{\cdot}(v^1_i\otimes v^2_j)=z^{\gamma^1_i + \gamma2_j}v^1_i\otimes v^2_j$. Suppose now that $\muno=-\gamma^1_{i_0}$ and $\mudu=-\gamma^2_{j_0}$, using the notations of section \ref{secsemista} this means that $\sigma_1(x)(v^1_i)=0$ for any $i<i_0$ and $\sigma_2(x)(v^2_j)=0$ for any $j<j_0$. Of course $\sigma(x)(v^1_{i_0}\otimes v^2_{j_0})\neq 0$ and so $\mu\ab{}\geq -( \gamma_{i_0}+\gamma'_{j_0})$, if $\mu\ab{}>-(\gamma_{i_0}+\gamma'_{j_0})$ then since $\gamma_i$ and $\gamma'_j$ are ordered there exist $i_1,j_1$ such that $\sigma(x)(v^1_{i_1}\otimes v^2_{j_1})\neq 0$ with either $i_1<i_0$ or $j_1<j_0$, which is impossible.
\end{proof}

\begin{cor}
A \ddgpb is (semi)stable if and only if its associated \dgpb is so.
\end{cor}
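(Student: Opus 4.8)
The plan is to derive this corollary directly from the preceding theorem, since essentially all of its content has already been established there. First I would recall that the theorem proves, for every weighted filtration $(\filtrazione)$, the pointwise identity
\begin{equation*}
\mu\ab{}(\filtrazione;\varphi)=\muno+\mudu,
\end{equation*}
which arises because the Segre embedding \eqref{segre} tensors the two decorations: a weight-$\gamma^1_i$ eigenvector of $V^1$ and a weight-$\gamma^2_j$ eigenvector of $V^2$ produce the weight-$(\gamma^1_i+\gamma^2_j)$ eigenvector $v^1_i\otimes v^2_j$ of $V\ab{}=V^1\otimes V^2$, and this, combined with the minimum-over-non-vanishing-weights description of $\mu$, forces the additivity above.

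Next, fixing $\delta=\delta_1=\delta_2$, I would substitute this identity into the $\delta$-semistability function of the associated \dgpb to obtain, for every $(\filtrazione)$,
\begin{equation*}
P(\filtrazione)+\delta\,\mu\ab{}(\filtrazione;\varphi)=P(\filtrazione)+\delta_1\,\muno+\delta_2\,\mudu.
\end{equation*}
Since the two sides are literally equal for each filtration, the family of inequalities defining $\delta$-semistability of $(E,q,\varphi)$ coincides term by term with the family defining $(\delta_1,\delta_2)$-semistability of $(E,q,\varphi_1,\varphi_2)$. Hence one collection holds for all weighted filtrations exactly when the other does, which is the asserted equivalence. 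I would also note that $\varphi$ is non-zero whenever $\varphi_1$ and $\varphi_2$ are, so the associated \dgpb is a legitimate decorated bundle and the comparison is well posed.

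The only point needing care — and the one thing the theorem does not state verbatim — is that the theorem is phrased with the non-strict inequality, whereas the corollary also asserts the equivalence of the \emph{stable} cases, governed by the strict inequality. I would observe that this requires no new argument: the identity $\mu\ab{}=\muno+\mudu$ is a pointwise equality of integers, entirely insensitive to whether one subsequently imposes $\geq 0$ or $>0$, so the stable equivalence follows by replacing every $\geq$ by $>$ in the substitution above. I therefore do not expect a genuine obstacle here; the substantive work was carried out in the theorem, and this corollary is its immediate specialization.
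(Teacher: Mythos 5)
Your proposal is correct and matches the paper's intent: the corollary is stated without proof precisely because it is the immediate specialization of the preceding theorem, whose proof establishes the pointwise additivity $\mu\ab{}(\filtrazione;\varphi)=\muno+\mudu$ that you invoke. Your extra observation that the identity is insensitive to replacing $\geq$ by $>$, so the stable case comes for free, is a sensible and accurate completion of the argument.
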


In order to study the moduli space of \ddgpb we can restrict to the \dgpb's. Moreover, recalling that $L=L_1\otimes L_2\otimes D^{\otimes(c_1+c_2)}$, it will turn out that the moduli space $\modulispacess_{(D,r,\underline{a},\underline{b},\underline{c},L_1,L_2)}$ is a closed subscheme of the moduli space $\unomodulispacess_{(D,r,a,b,L)}$ and the latter is a closed subscheme of $\unomodulispacess_{(d,r,a,b,L)}$. For this reason, from now on, we will treat only \dgpb's $(E,q,\varphi)$ of type $(d,r,a,b,L)$ where $E$ is a vector bundle of rank $r$ and degree $d$ and $q$ is the parabolic structure, while $\varphi$ will be a function $E\ab{}\to L$. Moreover the notion of isomorphism between \dgpb's and families of \dgpb's are similar to those of a \ddgpb introduced in Section \ref{sec-famiglie}.\\

In \cite{schmitt2005singular} Schmitt constructs a projective scheme $\unomodulispacess_{(d,r,a,b,L)}$  and an open subscheme $\unomodulispaces_{(d,r,a,b,L)}$ which are moduli spaces for the following functors:
\begin{align*}
\mathfrak{\underline{M}}(\rho)^{(\delta)\mbox{-}(s)s}_{d,r,a,b,L}:  & \text{Sch}_{\cc}\to \text{Sets}\\
& \; S\longmapsto\left\lbrace\begin{array}{c}
                  \text{Isomorphism classes of families of}\\
                  \delta\text{-(semi)stable decorated parabolic}\\
                  \text{vector bundles on $\xtilde$} \\ 
                  \text{of type } (d,r,a,b,L)\\
                  \text{parametrized by }S
                  \end{array}\right\rbrace
\end{align*}

\begin{proof}[Proof of Theorem \ref{teo-main}]
There is a one to one correspondence between the family of singular principal Higgs $G$-bundles over $X$ and the family of descending principal Higgs $G$-bundles over $\xtilde$. We have the following chain of inclusions:\\

\begin{displaymath}
\begin{array}{cc}
\text{Objects} & \text{Semistability notion}\\
& \\
\left\lbrace\begin{array}{c}
            \text{Semistable principal Higgs}\\
            G\text{-bundles }(E,\tau,\phi)\text{ on }X\\
            \text{of rank }r
            \end{array}
\right\rbrace & 
\begin{array}{c}
L(E^{\bullet}_{\beta},\underline{\alpha})(\geq)0\\
\text{for any }\phi\text{-invariant }\beta\text{-filtration}\\
\end{array}\\
\downarrow & \\
\left\lbrace\begin{array}{c}
            \text{Semistable singular principal}\\
            \text{Higgs }G\text{-bundles }(\efascio,\tau,\phi)\text{ on }\xtilde\\
            \text{of rank }r
            \end{array}
\right\rbrace & 
\begin{array}{c}
L(\efascio^{\bullet}_{\beta},\underline{\alpha})(\geq)0\\
\text{for any }\phi\text{-invariant }\beta\text{-filtration}\\
\end{array}\\
\updownarrow & \\
\left\lbrace\begin{array}{c}
            \text{Semistable descending principal}\\
            \text{Higgs }G\text{-bundles }(E,q,\tautilde,\phitilde)\text{ on }\xtilde\\
            \text{of rank }r
            \end{array}
\right\rbrace & 
\begin{array}{c}
P(\efil_{\beta},\alphafil_{\beta})(\geq)0\\
\text{for any }\phi\text{-invariant }\beta\text{-filtration}\\
\end{array}\\
\downarrow & \\
\left\lbrace\begin{array}{c}
            \text{Semistable \ddgpb}\\
            (E,q,\varphi_1,\varphi_2)\text{ on }\xtilde\\
            \text{of type }(\oxtilde,r,\underline{a},\underline{b},\underline{c},L_1,L_2)
            \end{array}
\right\rbrace & 
\begin{array}{c}
P(\filtrazione)+\delta_1\muno+\\
+\delta_2\mudu(\geq)0\\
\text{for any weighted filtration }\\
\end{array}\\
\downarrow & \\
\left\lbrace\begin{array}{c}
            \text{Semistable \dgpb}\\
            (E,q,\varphi)\text{ on }\xtilde\\
            \text{of type }(0,r,a,b,L)
            \end{array}
\right\rbrace & 
\begin{array}{c}
P(\filtrazione)+\delta\mu(\filtrazione;\varphi)(\geq)0\\
\text{for any weighted filtration }
\end{array}\\
\end{array}
\end{displaymath}
Therefore the moduli space $\mathring{\singprincipalmodulispacess}$ of isomorphism classes of semistable principal Higgs $G$-bundles over $X$ is a subscheme of $\unomodulispacess_{0,r,a,b,L}$. Then considering its closure $\singprincipalmodulispacess$ in $\unomodulispacess_{0,r,a,b,L}$, thanks to Theorem $2.5$ of \cite{schmitt2000universal}, we get our thesis.
\end{proof}

\subsubsection*{Semistable $n$-uples}

As an application of our results we can construct the moduli space of \textit{semistable $n$-uples} which generalize the semistable pairs introduced by Nitsure in \cite{nitsure1991moduli}.

In this work Nitsure studied the family of semistable pairs $(E,\phi)$ and contructed their moduli space. A pair $(E,\phi)$ consists of a vector bundle $E$ of degree $d$ and rank $r$ on a Riemann surface $X$ and a morphism of vector bundles $\phi:E\to E\otimes L$ where $L$ is a fixed line bundle on $X$. A pair $(E,\phi)$ is said to be (semi)stable if $\mu(F)(\leq) \mu(E)$ for any non-zero $\phi$-invariant subbundle $F\subset E$.

We want to generalize this notion to $n$-uples $(E,\phi_1,\dots,\phi_{n-1})$ where $E$ is a vector bundle on $X$ and $\phi_i:E\to E\otimes L_i$ for all $i=1,\dots,n-1$. We say that a $n$-uple $(E,\phi_1,\dots, \phi_{n-1})$ is \textbf{(semi)stable} if and only if $\mu(F)\leq \mu(E)$ for any non-zero proper $\phi_i$-invariant subbundle $F\subset E$, i.e., a subbundle $F$ such that $0\subsetneq F\subsetneq E$ and $\phi_i(F)\subset F\otimes L_i$ for any $i=1,\dots n-1$.

If some $\phi_{i_0}$ is the zero map we can consider the $n$-uple $(E,\phi_1,\dots,\hat{\phi}_{i_0},\dots,\phi_{n-1})$ obtained from the previous one by deleting the morphism $\phi_{i_0}$. Since every subbundle is $\phi_{i_0}$-invariant the definitions of semistability of the two objects coincide. So we can assume without loss of generality that all $\phi_i$ are non-zero.

By using an argument similar to that used at the beginning of Section \ref{secmodulispace} the $(n-1)$-uple of morphism $(\phi_1,\dots,\phi_{n-1})$ induces a morphism $\phi:E\to E\otimes L$, where $L$ is the tensor product of the $L_i$. So we obtain a pair $(E,\phi)$ instead of a $n$-uple and the (semi)stability conditions are equivalent. Therefore we are reduced to the previous problem.\\

\section{Frame decorated bundles}
Although it is possible to construct the moduli space of decorated bundles, a notion of Jordan-H\"older filtration is still missing. For this reason we will introduce a notion of semistability analogous to the one used for framed sheaves. This semistability, that we will call \textit{frame semistability}, implies the usual semistability condition for decorated bundles given above.\\

Let $(E,q,\varphi)$ be a decorated generalized parabolic bundle over $\xtilde$ with decoration of type $(d,r,a,b,L)$.
We define:
\begin{equation*}
\muphipar{E}{\varphi}\doteqdot \mupar(E)-\delta a \frac{\varepsilon(\varphi)}{\rk{E}},
\end{equation*}
where $\mupar(E)$ is as in Definition \ref{def-semistable-par-higgs-bundles} and
\begin{equation*}
\varepsilon(\varphi)\doteqdot
\begin{cases}
0 \quad\text{if}\;\varphi\equiv0\\
1 \quad\text{if}\;\varphi\not\equiv0.
\end{cases}
\end{equation*}

\begin{deff}[\textbf{\textbf{fr}-(semi)stable decorated bundles}]
A decorated generalized parabolic bundle $(E,q,\varphi)$ over $\xtilde$ with decoration of type $(d,r,a,b,L)$ is (semi)stable if and only if for all subsheaves $F$ of $E$ the following inequality holds:
\begin{equation*}
\muphipar{E}{\varphi}-\muphipar{F}{\rest{\varphi}{F}}(\geq)0.\\
\end{equation*}
\end{deff}

We want to relate this semistability condition with that given by Schmitt.

\begin{prop}\label{prop-additivity}
Let $(E,q,\varphi)$ as before. The following conditions are equivalent:
\begin{enumerate}
 \item $P(\filtrazione)+\delta\mu_{\rho\ab{}}(\filtrazione;\varphi)\geq0$ for all weighted filtrations $(\filtrazione)$,
 \item $\degpar(E)\rk{F}-\degpar(F)\rk{E}+\delta\mu(F,E)\geq0$ for all subsheaves $F\subset E$,
\end{enumerate}
where we recall that $\mu(F,E)=\mu_{\rho\ab{}}(0\subset F\subset E,(1);\varphi)$.
\end{prop}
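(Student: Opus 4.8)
The plan is to prove the two implications separately, the first being immediate and the second carrying essentially all of the content.

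For (1) $\Rightarrow$ (2) I would simply specialize the hypothesis to one-step filtrations. Given a subsheaf $F\subset E$, apply (1) to the weighted filtration $(0\subset F\subset E,(1))$. By definition $P(0\subset F\subset E,(1))=\degpar(E)\rk{F}-\degpar(F)\rk{E}$ and $\mu_{\rho_{a,b}}(0\subset F\subset E,(1);\varphi)=\mu(F,E)$, so the inequality in (1) is literally the inequality in (2). No further work is needed here.

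For (2) $\Rightarrow$ (1) the structural facts I would exploit are the additivity of $P$ along a filtration, namely
\begin{equation*}
P(\filtrazione)=\sum_{i=1}^s\alpha_i\,P(0\subset E_i\subset E,(1)),
\end{equation*}
together with the combinatorial description of $\mu_{\rho_{a,b}}$ recalled in Section \ref{secsemista}: since $\mu_{\rho_{a,b}}(\filtrazione;\varphi)=-\min\{\gamma_{j_1}+\dots+\gamma_{j_a}\}$ with $\underline{\gamma}$ depending linearly on the weights $\alpha_i$ and the ranks $\rk{E_i}$, the quantity $\mu_{\rho_{a,b}}(\filtrazione;\varphi)$ is a \emph{maximum} of linear functions of $\underline{\alpha}$, the maximum ranging over the index tuples on which $\varphi$ does not vanish. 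Expressing each weight $\gamma_{j}$ through the ranks and collecting terms, one rewrites the left-hand side of \eqref{semistabilityddgpb} as a single maximum
\begin{equation*}
P(\filtrazione)+\delta\,\mu_{\rho_{a,b}}(\filtrazione;\varphi)=\max_{x}\sum_{i=1}^s\alpha_i\Big(P(0\subset E_i\subset E,(1))+\delta\big(r\,m_i(x)-a\,\rk{E_i}\big)\Big),
\end{equation*}
where $m_i(x)$ is the number of tensor factors of the admissible tuple $x$ lying in $E_i$. Because this is a maximum and the $\alpha_i$ are positive, it suffices to exhibit \emph{one} admissible tuple $x$ for which the associated weighted sum is non-negative.

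The heart of the matter — and the step I expect to be the main obstacle — is the selection of such a tuple and its comparison with the one-step inequalities (2). The difficulty is genuine: the value of $\mu$ for a multi-step filtration is governed by a single optimal tuple and is in general \emph{not} the additive combination $\sum_i\alpha_i\mu(E_i,E)$, so one cannot just add the inequalities (2) over the steps $E_i$ (a filtration may concentrate the decoration $\varphi$ differently from any of its own members). The way I would resolve this, following Schmitt \cite{schmitt2000universal}, is to match the optimal tuple $x$ with an \emph{honest} subsheaf of $E$ — not necessarily one appearing in the given flag — and to apply (2) to that subsheaf; this is exactly the reduction of semistability for decorated bundles to the one-step case carried out there. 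Concretely I would use the piecewise-linearity and positive homogeneity of $\underline{\alpha}\mapsto\mu_{\rho_{a,b}}(\filtrazione;\varphi)$ to locate the optimal tuple, invoke the cited combinatorial properties of $\mu_{\rho_{a,b}}$ to produce the corresponding subsheaf, and use the boundedness estimate of Proposition \ref{prhiggs} to keep the weights under control throughout.
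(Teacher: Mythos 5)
Your first implication is fine and is exactly what the paper does: the one\mbox{-}step case of $(1)$ is literally $(2)$. The problem is the converse, which you correctly identify as carrying all the content and then do not actually prove. The paper's argument for $(2)\Rightarrow(1)$ is a one\mbox{-}line reduction: apply $(2)$ to each step $E_i$, take the weighted sum to get $P(\filtrazione)+\delta\sum_{i}\alpha_i\mu(E_i,E)\geq 0$, and then invoke the \emph{additivity property} of the tensor-power representation from Section 3.1 of \cite{schmitt2000universal}, namely $\sum_i\alpha_i\mu(E_i,E)=\mu_{\rho_{a,b}}(\filtrazione;\varphi)$. You explicitly reject this identity, on the grounds that the minimizing index tuple for the full filtration need not simultaneously minimize each one-step contribution; note that the trivial inequality between a minimum of a sum and a sum of minima only gives $\mu_{\rho_{a,b}}(\filtrazione;\varphi)\leq\sum_i\alpha_i\mu(E_i,E)$, which points the wrong way for the desired conclusion. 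Having discarded the paper's key lemma, you owe a complete substitute for it, and what you offer is a plan rather than an argument: you say you would ``match the optimal tuple with an honest subsheaf'' and ``apply $(2)$ to that subsheaf,'' but you never construct the tuple, never produce the subsheaf, and never show how the one-step inequality for a subsheaf \emph{outside} the flag yields the weighted inequality $\sum_i\alpha_i\bigl(P(0\subset E_i\subset E,(1))+\delta(r\,m_i(x)-a\,\rk{E_i})\bigr)\geq 0$ for the given flag and weights. That is precisely the step you yourself label ``the main obstacle,'' and it is left open.

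Two further concrete problems. First, the appeal to Proposition \ref{prhiggs} is a non sequitur: that proposition asserts boundedness of a \emph{family} of semistable descending principal Higgs bundles, whereas the present statement is a purely algebraic equivalence for a single fixed $(E,q,\varphi)$ with arbitrary positive rational weights $\underline{\alpha}$; no boundedness input can be relevant. Second, your preliminary rewriting of $P(\filtrazione)+\delta\mu_{\rho_{a,b}}(\filtrazione;\varphi)$ as a maximum over admissible tuples of linear functions of $\underline{\alpha}$ is correct, and it does reduce the claim to exhibiting one admissible tuple with non-negative weighted sum, but that is where the proof has to begin, not where it may stop. Your scepticism about additivity is in fact defensible (on a split bundle one can choose $\varphi$ non-zero on $E_1\otimes E$ and on $E_2\otimes E_2$ but vanishing on $E_1\otimes E_2$, and then no common minimizer exists), which makes the omission more, not less, serious: either you prove $(2)\Rightarrow(1)$ by a genuinely different mechanism, or you concede that the equivalence rests on the additivity property the paper cites. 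As written, the direction $(2)\Rightarrow(1)$ is not established.
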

\begin{proof}
\begin{itemize}
\item[]$(1\Rightarrow 2)$ Obvious.
\item[]$(2\Rightarrow 1)$ Let $(\filtrazione)$ be a weighted filtration. Then condition $(2)$ implies that:
\begin{equation*}
 P(\filtrazione)+\delta\sum_{i=1}^s \alpha_i\mu(E_i,E)\geq0.
\end{equation*}
Since the rapresentation $Gl(V)\to Gl(V_{a,b})$ satisfies the additivity property given by Schmitt in Section 3.1 of \cite{schmitt2000universal}, it follows that $\sum_{i=1}^s \alpha_i\mu(E_i,E)=\mu_{\rho\ab{}}(\filtrazione;\varphi)$ and we are done.
\end{itemize}
\end{proof}

\begin{oss}
 The previous proposition tells us that it is enought to check the semistability condition only on subbundles instead of filtrations. Let us denote with
\begin{equation*}
F^{i_1,\dots,i_k}\doteqdot E\otimes\dots\otimes E\otimes\stackrel{i_1\text{\tiny{-term}}}{F}\otimes E\otimes\dots\otimes E\otimes\stackrel{i_k\text{\tiny{-term}}}{F}\otimes E\otimes\dots\otimes E
\end{equation*}
Finally we define
\begin{equation*}
 \kk{F}{E}\doteqdot\max\{k\in\nn\;|\;\exists (i_1,\dots,i_k)\in\{1,\dots,a\}^k \text{ such that }\rest{\varphi}{(F^{i_1,\dots,i_k})^{\oplus b}}\neq 0\}
\end{equation*}
In this case a calculation shows that
\begin{equation*}
\mu(F,E)=\kk{F}{E}\,\rk{E}-a\,\rk{F},
\end{equation*}
Therefore we can rewrite condition $(2)$ of Proposition \ref{prop-additivity} as follows:
\begin{equation*}
 \mupar(F)-\delta\frac{\kk{F}{E}}{\rk{F}}\leq\mupar(E)-\delta\frac{a}{\rk{E}}.\\
\end{equation*}
\end{oss}
\begin{cor}
 \textbf{fr}-semistability implies Schmitt semistability for decorated bundles.
\end{cor}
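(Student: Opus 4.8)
The plan is to reduce both notions of (semi)stability to inequalities ranging over subsheaves $F\subset E$ and then to compare them term by term. By Proposition \ref{prop-additivity} together with the remark following it, Schmitt $\delta$-(semi)stability of $(E,q,\varphi)$ is equivalent to
$$\mupar(F)-\delta\frac{\kk{F}{E}}{\rk{F}}(\leq)\mupar(E)-\delta\frac{a}{\rk{E}}\qquad\text{for every subsheaf }F\subset E,$$
whereas, expanding the definition, \textbf{fr}-(semi)stability is exactly
$$\mupar(F)-\delta a\frac{\varepsilon(\rest{\varphi}{F})}{\rk{F}}(\leq)\mupar(E)-\delta a\frac{\varepsilon(\varphi)}{\rk{E}}\qquad\text{for every subsheaf }F\subset E.$$
Since we are in the relevant case $\varphi\not\equiv0$, we have $\varepsilon(\varphi)=1$, so the two right-hand sides coincide; it therefore suffices to show that the left-hand side of the Schmitt inequality is dominated by that of the \textbf{fr}-inequality, which amounts to the single numerical estimate $a\,\varepsilon(\rest{\varphi}{F})\leq\kk{F}{E}$ for all $F$.

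The heart of the argument is to verify this estimate, and here I would argue combinatorially. On the one hand $\kk{F}{E}\leq a$ always, since $k$ ranges over $\{1,\dots,a\}$. On the other hand, $\kk{F}{E}=a$ holds precisely when one may place $F$ in all $a$ tensor positions with $\varphi$ nonvanishing on the resulting summand; but that summand is exactly $(F^{\otimes a})^{\oplus b}$, so $\kk{F}{E}=a$ if and only if $\rest{\varphi}{F}\not\equiv0$, i.e. $\varepsilon(\rest{\varphi}{F})=1$. Consequently, when $\varepsilon(\rest{\varphi}{F})=1$ both sides of the estimate equal $a$, and when $\varepsilon(\rest{\varphi}{F})=0$ the left side is $0\leq\kk{F}{E}$; in either case $a\,\varepsilon(\rest{\varphi}{F})\leq\kk{F}{E}$.

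With the estimate in hand I would conclude by the term-by-term comparison: since $\delta>0$ and $\rk{F}>0$, the inequality $a\,\varepsilon(\rest{\varphi}{F})\leq\kk{F}{E}$ gives $-\delta\frac{\kk{F}{E}}{\rk{F}}\leq-\delta a\frac{\varepsilon(\rest{\varphi}{F})}{\rk{F}}$, whence
$$\mupar(F)-\delta\frac{\kk{F}{E}}{\rk{F}}\leq\mupar(F)-\delta a\frac{\varepsilon(\rest{\varphi}{F})}{\rk{F}}\leq\mupar(E)-\delta a\frac{\varepsilon(\varphi)}{\rk{E}}=\mupar(E)-\delta\frac{a}{\rk{E}},$$
the middle inequality being the assumed \textbf{fr}-(semi)stability. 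This yields the Schmitt condition for $F$, and since $F$ was arbitrary we are done; the strict version needed for stability propagates through the middle step, so the semistable and stable cases are handled at once. The only genuine obstacle is the combinatorial identification of the extremal case $\kk{F}{E}=a$ with $\varepsilon(\rest{\varphi}{F})=1$; once that is pinned down, the corollary is immediate from Proposition \ref{prop-additivity}.
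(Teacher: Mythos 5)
Your argument is correct and is exactly the route the paper intends: its one-line proof ("follows directly from Proposition \ref{prop-additivity} and the previous remark") is precisely your reduction to the subsheaf inequality $\mupar(F)-\delta\,\kk{F}{E}/\rk{F}\leq\mupar(E)-\delta a/\rk{E}$ followed by the comparison with the \textbf{fr}-inequality via $a\,\varepsilon(\rest{\varphi}{F})\leq\kk{F}{E}$. Your combinatorial verification that $\kk{F}{E}=a$ exactly when $\rest{\varphi}{(F^{\otimes a})^{\oplus b}}\not\equiv0$ correctly supplies the detail the paper leaves implicit.
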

\begin{proof}
 Follows directly from Proposition \ref{prop-additivity} and the previous remark.
\end{proof}
From now on with ``(semi)stable'' we will always mean frame (semi)stable.
\subsection{Jordan-Holder filtrations}\label{sec-j-h-fil}
In order to construct Jordan-H\"older filtrations we need the notion of \textbf{quotients for decorated bundles}. If $(F,\rest{\varphi}{F\ab{}})$ is a subsheaf of $(E,\varphi)$ such that $\rest{\varphi}{F}\equiv0$ then we can construct the decorated quotient bundle $(E/F,\overline{\varphi})$, where $\overline{\varphi}:E\ab{}/F\ab{}\longrightarrow L$, induced by $\varphi$, is well defined. Otherwise if $\rest{\varphi}{F\ab{}}\not\equiv0$ we set $\overline{\varphi}=0$ (see \cite{huybrechts1995framed} for details). In general we will say that a decorated parabolic bundle $(E',q',\varphi')$ is a quotient of $(E,q,\varphi)$ if and only if exists $F\subseteq E$ such that $(E',\varphi')\simeq(E/F,\overline{\varphi})$ and $q'$ is induced by $q$.

\begin{lem}\label{lem-mu-baricentrica}
Given an exact sequence of decorated bundles
\begin{equation*}
0\longrightarrow (F,\rest{\varphi}{F})\longrightarrow (E,\varphi) \longrightarrow (E/F,\overline{\varphi}) \longrightarrow 0,
\end{equation*}
one has
\begin{equation*}
\muphipar{E}{\varphi}=\frac{\rk{F}\muphipar{F}{\rest{\varphi}{F}}+\rk{E/F}\muphipar{E/F}{\overline{\varphi}}}{\rk{E}}.
\end{equation*}
\end{lem}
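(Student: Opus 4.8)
The plan is to clear the denominator and split the identity into its ``parabolic'' part and its ``decoration'' part. Recalling that
\[
\muphipar{E}{\varphi}=\mupar(E)-\delta a\frac{\varepsilon(\varphi)}{\rk{E}},
\]
and using $\rk{E}=\rk{F}+\rk{E/F}$, multiplying the asserted formula by $\rk{E}$ shows it is equivalent to the conjunction of the two additivity statements
\[
\rk{E}\,\mupar(E)=\rk{F}\,\mupar(F)+\rk{E/F}\,\mupar(E/F)
\]
and
\[
\varepsilon(\varphi)=\varepsilon(\rest{\varphi}{F})+\varepsilon(\overline{\varphi}).
\]
So it suffices to treat the underlying parabolic slope and the $\varepsilon$-term independently and then recombine.

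First I would dispatch the parabolic part, which is the standard barycentric property of the parabolic slope. Since $\mupar(E)\rk{E}=\degpar(E)$ and $\degpar(F)=\deg F-\dim q(F_{x_1}\oplus F_{x_2})$, additivity of $\degpar$ on the sequence $0\to F\to E\to E/F\to0$ follows from additivity of $\deg$ (immediate from the definition of degree via $\chi$) together with additivity of the fibre term: the induced quotient structure $\overline q\colon (E/F)_{x_1}\oplus(E/F)_{x_2}\to R/q(F_{x_1}\oplus F_{x_2})$ together with surjectivity of $q$ gives $\dim q(E_{x_1}\oplus E_{x_2})=\dim q(F_{x_1}\oplus F_{x_2})+\dim\overline q\bigl((E/F)_{x_1}\oplus(E/F)_{x_2}\bigr)$. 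Hence $\degpar(E)=\degpar(F)+\degpar(E/F)$, which is exactly the first displayed identity after dividing.

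The genuine content is the additivity of $\varepsilon$, and this is where the definition of the quotient decoration $\overline{\varphi}$ must be used with care, since $\overline\varphi$ is itself defined by cases. I would argue by cases on whether $\varphi$ and $\rest{\varphi}{F}$ vanish. If $\varphi\equiv0$, then $\rest{\varphi}{F}\equiv0$ and (being induced by $\varphi$) $\overline\varphi\equiv0$, so all three terms are $0$. If $\varphi\not\equiv0$ but $\rest{\varphi}{F}\equiv0$, then by definition $\overline\varphi$ is the map induced on the quotient $E\ab{}/F\ab{}$, and it is nonzero because $\varphi$ is nonzero and factors through the surjection; thus $1=0+1$. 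If $\rest{\varphi}{F}\not\equiv0$, then by definition one sets $\overline\varphi=0$, so $\varepsilon(\overline\varphi)=0$ while $\varepsilon(\varphi)=\varepsilon(\rest{\varphi}{F})=1$, giving $1=1+0$. The case $\varphi\equiv0$ with $\rest{\varphi}{F}\not\equiv0$ cannot occur. In every case $\varepsilon(\varphi)=\varepsilon(\rest{\varphi}{F})+\varepsilon(\overline\varphi)$, which is the second identity.

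Combining the two additivity statements yields the lemma. The only delicate point, and the one I would flag as the main obstacle, is precisely that $\overline\varphi$ is declared to be zero exactly when $\rest{\varphi}{F}\not\equiv0$, so the additivity of $\varepsilon$ is not a formal identity of dimensions but a case-matching that has to reproduce the three scenarios above; this is the same bookkeeping underlying the framed-sheaf formalism of Huybrechts--Lehn, from which the definition of the quotient decoration is borrowed.
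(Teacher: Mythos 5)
Your proof is correct and takes essentially the same route as the paper: the authors reduce the lemma to the additivity of the quantity $\degpar(E,\varphi)\doteqdot\degpar(E)-a\delta\varepsilon(\varphi)$ along the exact sequence and dismiss that as ``an easy computation,'' which is exactly the two additivity statements (parabolic degree and $\varepsilon$-term) you verify. Your case analysis for $\varepsilon(\varphi)=\varepsilon(\rest{\varphi}{F})+\varepsilon(\overline{\varphi})$ spells out the bookkeeping the paper leaves implicit, and it is consistent with the paper's definition of the quotient decoration.
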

\begin{proof}
It is enough to show that $\degpar(E,\varphi)\doteqdot\degpar(E)-a\delta\varepsilon(\varphi)$ is additive, i.e.
\begin{equation*}
\degpar(E,\varphi)=\degpar(F,\rest{\varphi}{F})+\degpar(E/F,\overline{\varphi}),
\end{equation*}
and this is an easy computation.
\end{proof}

\begin{lem}\label{lem-schur}
Let $(E,q,\varphi)$ and $(E',q',\varphi')$ be two stable decorated parabolic bundles with $\muphipar{E}{\varphi}=\muphipar{E'}{\varphi'}$ and let $f:(E,q,\varphi)\to (E',q',\varphi')$ be a morphism of decorated bundles. Then or $f\equiv0$ either $f$ is an isomorphism.
\end{lem}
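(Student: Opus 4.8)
The plan is to prove a Schur-type lemma, so the natural strategy is to mimic the classical proof for stable vector bundles while keeping careful track of the decoration. First I would assume $f\not\equiv0$ and aim to show $f$ is an isomorphism. The key objects are the kernel and image of $f$: set $F\doteqdot\ker f\subseteq E$ and let $I\doteqdot\mathrm{Im}\,f\subseteq E'$, so that $E/F\simeq I$ as sheaves. Since $f$ is a morphism of decorated bundles, it is compatible with the decorations $\varphi$ and $\varphi'$, which is the ingredient that lets me turn slope comparisons into equalities of the decorated invariant $\phimu$.

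The heart of the argument uses stability on both sides together with the additivity from Lemma \ref{lem-mu-baricentrica}. Suppose $f\not\equiv0$ and suppose first $F\neq 0$ is a proper subsheaf of $E$. Stability of $(E,q,\varphi)$ gives a strict inequality $\muphipar{F}{\rest{\varphi}{F}}<\muphipar{E}{\varphi}$, and by Lemma \ref{lem-mu-baricentrica} (the barycentric identity applied to $0\to F\to E\to E/F\to 0$) the quotient must then satisfy $\muphipar{E/F}{\overline{\varphi}}>\muphipar{E}{\varphi}$. On the other hand, the image $I\simeq E/F$ is a subsheaf of $E'$; applying stability of $(E',q',\varphi')$ to this subsheaf and using the hypothesis $\muphipar{E}{\varphi}=\muphipar{E'}{\varphi'}$ yields $\muphipar{I}{\rest{\varphi'}{I}}\leq\muphipar{E'}{\varphi'}=\muphipar{E}{\varphi}$. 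The final step is to compare the induced decoration $\overline{\varphi}$ on $E/F$ with the restricted decoration $\rest{\varphi'}{I}$ on $I\simeq E/F$; because $f$ intertwines the decorations, these two invariants coincide (or at least $\muphipar{E/F}{\overline{\varphi}}\leq\muphipar{I}{\rest{\varphi'}{I}}$), and chaining the inequalities produces the contradiction $\muphipar{E}{\varphi}<\muphipar{E}{\varphi}$. This forces $F=0$, so $f$ is injective.

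Having shown $f$ is injective, I would run the dual argument on the quotient side to conclude surjectivity. The injective $f$ exhibits $E$ as a subsheaf of $E'$ with $\muphipar{E}{\varphi}=\muphipar{E'}{\varphi'}$; if the cokernel were nonzero, stability of $E'$ would force $\muphipar{E}{\varphi}<\muphipar{E'}{\varphi'}$, contradicting the hypothesis. Hence $f$ is an isomorphism of the underlying sheaves, and the compatibility with $q,q'$ and the decorations (built into the definition of a morphism of decorated bundles) upgrades it to an isomorphism of decorated parabolic bundles.

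The main obstacle I anticipate is the bookkeeping of the decoration along $f$, specifically justifying the comparison between $\overline{\varphi}$ on $E/F$ and $\rest{\varphi'}{I}$ on the image: one must check that vanishing or non-vanishing of the decoration is preserved under the identification $E/F\simeq I$, since the definition of $\overline{\varphi}$ sets it to zero precisely when $\rest{\varphi}{F\ab{}}\not\equiv0$. The delicate point is that the strict versus non-strict inequalities in the (semi)stability condition must be used in the correct direction, and one must invoke \emph{stability} (not merely semistability) on at least one side to obtain the strict inequality that yields the contradiction. Once these decoration-tracking subtleties are handled, the slope inequalities close up exactly as in the classical case.
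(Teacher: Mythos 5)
Your argument is correct and is exactly the standard Schur-type argument (the analogue of Lemma 1.12 in Huybrechts--Lehn) that the paper implicitly relies on: the paper itself states this lemma without proof. The one delicate point you flag -- comparing $\overline{\varphi}$ on $E/\ker f$ with $\rest{\varphi'}{\mathrm{Im}f}$ -- in fact resolves cleanly: since $f$ intertwines the decorations, $\rest{\varphi}{(\ker f)\ab{}}=\varphi'\circ\rest{f\ab{}}{(\ker f)\ab{}}\equiv 0$ automatically, so $\overline{\varphi}$ is the genuinely induced map and $\varepsilon(\overline{\varphi})=\varepsilon(\rest{\varphi'}{\mathrm{Im}f})$, which makes the chain of inequalities close up as you describe.
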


\begin{prop}[\textbf{Jordan-H\"older filtration}]
Let $(E,q,\varphi)$ a semistable decorated generalized parabolic bundle, there exist filtrations $\efil:\; 0=E_0\subset E_1\subset\dots\subset E_s\subset E_{s+1}=E$ such that all the factors $E_i/E_{i-1}$ together with the induced parabolic structures and decorations $\overline{\varphi}_i$ are stable and $\muphipar{E}{\varphi}=\muphipar{E_i/E_{i-1}}{\overline{\varphi}_i}$. Any such filtration is called a Jordan-H\"older filtration (or J-H filtration) of $(E,q,\varphi)$.
\end{prop}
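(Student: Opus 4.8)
The plan is to argue by induction on $\rk{E}$, reproducing the classical Seshadri filtration argument but with the decorated slope $\muphipar{\cdot}{\cdot}$ in place of the ordinary slope and with the framed conventions of \cite{huybrechts1995framed} for restricting the decoration to a subsheaf and inducing it on a quotient. If $(E,q,\varphi)$ is already stable there is nothing to prove: the trivial filtration $0\subset E$ has the single stable factor $E$ and trivially $\muphipar{E}{\varphi}=\muphipar{E}{\varphi}$. So assume from now on that $(E,q,\varphi)$ is strictly semistable.

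First I would produce a stable subobject of the same slope. Consider the collection of nonzero proper subsheaves $F\subset E$ realizing equality $\muphipar{F}{\rest{\varphi}{F}}=\muphipar{E}{\varphi}$; this collection is nonempty precisely because $(E,q,\varphi)$ fails to be stable. Among them I choose $E_1$ of minimal rank, and I may further take $E_1$ saturated: its saturation has the same rank and decorated slope no smaller, hence (by semistability) again equal to $\muphipar{E}{\varphi}$, so it still realizes equality and lies in the same collection, while $E/E_1$ becomes a genuine parabolic bundle. Then $E_1$ is stable: any proper $0\ne F'\subset E_1$ is also a subsheaf of $E$, so semistability gives $\muphipar{F'}{\rest{\varphi}{F'}}\le\muphipar{E}{\varphi}=\muphipar{E_1}{\rest{\varphi}{E_1}}$, and equality here would place $F'$ in the above collection with $\rk{F'}<\rk{E_1}$, contradicting minimality; hence the inequality is strict, which is exactly stability of $E_1$.

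Next I would pass to the quotient and invoke the inductive hypothesis. Applying the barycentric formula of Lemma \ref{lem-mu-baricentrica} to the exact sequence $0\to(E_1,\rest{\varphi}{E_1})\to(E,\varphi)\to(E/E_1,\overline{\varphi})\to0$, the equality $\muphipar{E_1}{\rest{\varphi}{E_1}}=\muphipar{E}{\varphi}$ forces $\muphipar{E/E_1}{\overline{\varphi}}=\muphipar{E}{\varphi}$ as well. The same barycentric formula makes the subobject and quotient forms of semistability equivalent, so semistability descends to $E/E_1$: for $E_1\subset G\subset E$ the slope $\muphipar{G}{\rest{\varphi}{G}}$ is the weighted mean of $\muphipar{E_1}{\rest{\varphi}{E_1}}=\muphipar{E}{\varphi}$ and $\muphipar{G/E_1}{\overline{\varphi}}$, whence $\muphipar{G}{\rest{\varphi}{G}}\le\muphipar{E}{\varphi}$ forces $\muphipar{G/E_1}{\overline{\varphi}}\le\muphipar{E}{\varphi}=\muphipar{E/E_1}{\overline{\varphi}}$. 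Thus $(E/E_1,\overline{\varphi})$ is semistable of the same slope and of strictly smaller rank; the inductive hypothesis yields a filtration of $E/E_1$ with stable factors of slope $\muphipar{E}{\varphi}$, which I pull back along $E\twoheadrightarrow E/E_1$ and to which I prepend $E_1$, obtaining the desired filtration of $E$. Termination is guaranteed since $\rk$ strictly drops at each step.

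The step I expect to require the most care is the bookkeeping of the decoration under the two elementary operations, restriction to a subsheaf and passage to a quotient, because of the discontinuous $\varepsilon(\varphi)$ convention ($\overline{\varphi}$ is set to $0$ exactly when $\rest{\varphi}{F\ab{}}\not\equiv0$). Concretely, for a two-step flag $E_1\subset G\subset E$ one must check that the decoration induced on the subquotient $G/E_1$ is the same whether it is computed as a quotient of $(G,\rest{\varphi}{G})$ or as a subobject of $(E/E_1,\overline{\varphi})$, so that the factors produced through the inductive tower genuinely carry the subquotient decorations $\overline{\varphi}_i$ of $E$ claimed in the statement. This compatibility, together with the additivity of $\degpar(E,\varphi)=\degpar(E)-a\delta\varepsilon(\varphi)$ recorded in the proof of Lemma \ref{lem-mu-baricentrica}, is what legitimizes the barycentric and duality manipulations above, and it is inherited from the framed-module formalism of \cite{huybrechts1995framed}.
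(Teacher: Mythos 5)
Your argument is correct and is essentially the proof the paper delegates to the literature: the paper's own ``proof'' is just a citation of Proposition 1.13 of \cite{huybrechts1995framed}, whose content is exactly this induction on rank via a minimal-rank subsheaf of equal decorated slope, transported here to the slope $\muphipar{\cdot}{\cdot}$ using the additivity of $\degpar(E,\varphi)$ from Lemma \ref{lem-mu-baricentrica}. You also correctly isolate the one point that genuinely needs checking in the decorated setting, namely the compatibility of the induced decoration on a subquotient $G/E_1$ computed as a quotient of $(G,\rest{\varphi}{G})$ versus as a subobject of $(E/E_1,\overline{\varphi})$, which does hold under the $\varepsilon$ conventions of the paper.
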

\begin{proof}
See \cite{huybrechts1995framed} Proposition 1.13.
\end{proof}
Given a J-H filtration $\efil$ of $(E,q,\varphi)$ we define 
\begin{equation*}
\gr(E)\doteqdot\bigoplus_{i=1}^{s+1} E_i/E_{i-1}.
\end{equation*}
Let us note that
\begin{equation*}
\gr(E)\ab{}=\bigoplus_{i=1}^{s+1} (E_i/E_{i-1})\ab{} \oplus W
\end{equation*}
for a suitable sheaf $W$. Then we define
\begin{equation*}
\gr(\varphi)\doteqdot
\begin{cases}
\bigoplus_{i=1}^{s+1}\overline{\varphi}_i\quad\text{over}\;\bigoplus_{i=1}^{s+1} (E_i/E_{i-1})\ab{}\\
\hspace{0.5cm}0\hspace{1cm}\text{over}\; W.
\end{cases}
\end{equation*}
\begin{equation*}
\gr(q)\doteqdot\bigoplus_{i=1}^{s+1}\overline{q}_i:\gr(E)_{x_1}\oplus\gr(E)_{x_2}\longrightarrow\gr(R)
\end{equation*}
where $\gr(R)\doteqdot\bigoplus_{i=1}^{s+1} R_i/R_{i-1}$ and $R_i$ is a vector space of dimension $\rk{E_i}$ for any $i=0,\dots,s+1$.
\begin{prop}
Let $(E,q,\varphi)$ be a semistable decorated parabolic bundle with decoration $(d,r,a,b,L)$. The decorated parabolic bundle $(\gr(E),\gr(q),\gr(\varphi))$ with decoration $(d,r,a,b,L)$ does not depend on the J-H filtration chosen.
\end{prop}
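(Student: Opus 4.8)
The plan is to prove that $(\gr(E),\gr(q),\gr(\varphi))$ is independent of the chosen Jordan--H\"older filtration by following the classical strategy used for ordinary sheaves and for framed sheaves in \cite{huybrechts1995framed}. The essential point is that the graded object is the same for any two J-H filtrations; equivalently, it depends only on the $S$-equivalence class of $(E,q,\varphi)$. First I would recall that every J-H filtration produces stable factors $E_i/E_{i-1}$ with $\muphipar{E_i/E_{i-1}}{\overline{\varphi}_i}=\muphipar{E}{\varphi}$, so all factors share the common reduced invariant $\muphipar{E}{\varphi}$. By Lemma \ref{lem-mu-baricentrica} the invariant $\muphipar{\cdot}{\cdot}$ is a barycentric (rank-weighted) average across short exact sequences, which guarantees that this numerical constraint propagates consistently through refinements.

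The core of the argument is an induction on the length $s$ of the filtration, exactly as in the sheaf case. Given two J-H filtrations $\efil$ and $\efil'$ of $(E,q,\varphi)$, with first steps $E_1$ and $E_1'$, I would consider the composite morphism $E_1\hookrightarrow E\twoheadrightarrow E/E_1'$ and examine the induced map of decorated bundles $(E_1,\rest{\varphi}{E_1})\to (E/E_1',\overline{\varphi})$. Here the decisive tool is the Schur-type Lemma \ref{lem-schur}: since $E_1$ and the stable subquotients of $\efil'$ all have the same reduced invariant and are stable, any nonzero morphism between them is an isomorphism. A standard diagram chase then shows either that $E_1\simeq E_1'$ as decorated parabolic bundles, or that $E_1$ maps isomorphically onto one of the stable factors of the second filtration, after which one replaces $E$ by a quotient of strictly smaller length and invokes the induction hypothesis. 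The upshot is that the multiset of isomorphism classes of stable decorated factors $\{(E_i/E_{i-1},\overline{q}_i,\overline{\varphi}_i)\}$ is an invariant of $(E,q,\varphi)$.

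It then remains to verify that the three pieces of data assembled into $\gr$ are genuinely determined by this multiset. For $\gr(E)$ this is immediate once the factors are fixed up to isomorphism. For $\gr(q)$ one checks that the induced quotients $\overline{q}_i:\,(E_i/E_{i-1})_{x_1}\oplus (E_i/E_{i-1})_{x_2}\to R_i/R_{i-1}$ depend only on the factor and the dimension jumps $\dim R_i=\rk{E_i}$, which are filtration-independent because the ranks $\rk{E_i}$ are recovered from the ordered invariants of the factors. For $\gr(\varphi)$ one uses that $\overline{\varphi}_i$ is nonzero precisely on those factors for which $\rest{\varphi}{(E_i/E_{i-1})\ab{}}$ survives, while the decoration is set to zero on the complementary summand $W$; since each $\overline{\varphi}_i$ is intrinsic to its stable factor, the direct sum $\gr(\varphi)$ is well defined up to the isomorphism identifying the two graded bundles.

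The main obstacle I anticipate is the bookkeeping of the decoration $\varphi$ through the quotient construction, rather than the abstract uniqueness of the factors. Because $\overline{\varphi}$ is defined to vanish whenever $\rest{\varphi}{F\ab{}}\not\equiv 0$, the passage to quotients is not functorial in the naive sense, and one must argue carefully that the Schur lemma applies to morphisms of decorated bundles (and not merely of the underlying parabolic bundles) so that the identification of factors respects the decoration. Once one knows that each stable factor carries an intrinsic decoration $\overline{\varphi}_i$ that is canonically attached to its isomorphism class, the remaining verifications reduce to the additivity in Lemma \ref{lem-mu-baricentrica} and the elementary observation that the summand $W$ and the vanishing of $\gr(\varphi)$ on it are dictated by the representation $\rho\ab{}$ alone, hence are filtration-independent. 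For full details I would refer to \cite{huybrechts1995framed} Proposition 1.13 and adapt its proof to the present decorated parabolic setting.
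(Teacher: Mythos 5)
Your proposal is correct and follows essentially the same route as the paper: both arguments identify the first step of one Jordan--H\"older filtration with a stable factor of the other via the Schur-type Lemma \ref{lem-schur} (all factors sharing the reduced invariant $\muphipar{E}{\varphi}$), and then conclude by induction on a smaller object --- the paper inducts on $\rk{E}$ and passes to $E/E_1$ via the sequence $0\to F_{j-1}\to E/E_1\to E/F_j\to 0$, while you induct on the filtration length, a cosmetic difference. Your cautionary remark about the non-functoriality of the quotient decoration $\overline{\varphi}$ is well taken; the paper's proof elides this point as well.
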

\begin{proof}
By induction on $\rk{E}$. If $\rk{E}=1$ there is nothing to prove, otherwise we assume the statement is true for any $r'<r=\rk{E}$ and we prove it for $r$.
Let $\efil:\; 0\subset E_1\subset\dots\subset E_s\subset E_{s+1}=E$ and $F^{\bullet}:\; 0\subset F_1\subset\dots\subset F_s\subset F_{t+1}=E$ be two J-H filtrations of $E$. Let $j$ be the smallest index such that $E_1\subset F_j$. Then
\begin{equation*}
\psi:E_1\longrightarrow F_j\longrightarrow F_j/F_{j-1}
\end{equation*}
is a nontrivial homomorphism of stable vector bundles with the same $\textbf{fr}$-slope and so, by Lemma \ref{lem-schur}, $\psi$ is an isomorphism. Moreover there is a short exact sequence of decorated bundles 
\begin{equation*}
0\to (F_{j-1},q,\varphi)\to (E/E_1,q,\varphi)\to (E/F_j,q,\varphi)\to 0,
\end{equation*}
abusing $q,\varphi$ as a generic notation for the induced morphisms. The J-H filtrations of $E/F_j$ and $F_{j-1}$ give rise to a J-H filtration of $E/E_1$, whose graded object, by induction on the rank of $E$, is isomorphic to the graded object of the filtration $\efil/E_1$. 
\end{proof}
\begin{deff}
Two semistable decorated generalized parabolic bundles $(E,q,\varphi)$ and $(E',q'\varphi')$ with $\muphipar{E}{\varphi}=\muphipar{E'}{\varphi'}$ are called $S$-equivalent if and only if $(\gr(E),\gr(q),\gr(\varphi))\simeq(\gr(E'),\gr(q'),\gr(\varphi'))$
\end{deff}


\subsection{Construction of the moduli space}
In this section we construct the moduli space of semistable decorated bundles with fixed rank and degree and we prove that it is a projective scheme that contains the moduli space of stable decorated bundles as an open subset. If the decoration is trivial then we get the moduli space of semistable vector bundles on $\xtilde$. Therefore we assume that the decoration is non-zero.
Since the family $\ssfam$ of isomorphism classes of semistable generalized parabolic bundles with decoration $(d,r,a,b,L)$ is bounded (\cite{schmitt2005singular} Section 3), we can find a integer $m_0$ such that $\forall m\geq m_0$ and for all class $E\in\ssfam$ we have that:
\begin{enumerate}
\item $H^1(E(m))=0$,
\item $H^0(E(m))$ is globally generated,
\end{enumerate}
where we recall that $E(m)=E\otimes\oxtilde(m)$. Let $E\in\ssfam$ and let $(E,q,\varphi)$ be a representant for the class of $E$, then for a generic subsheaf $E'\subseteq E$ the Hilbert polynomial is:
\begin{equation*}
P_{E'}(l)\doteqdot \rk{E'}l+\deg(E')+(1-g)\rk{E'},
\end{equation*}
moreover we define
\begin{equation*}
\poly{E'}\doteqdot P_{E'}-\dim q(E'_{x_1}\oplus E'_{x_2})-a\;\delta\;\varepsilon(\rest{\varphi}{E'}).
\end{equation*}
Let $\quot$ be the projective scheme that parametrizes quotients $\quoz:Y\otimes\oxtilde(-m)\to E$, where $Y$ is a vector space of dimension $h^0(E(m))$ and $E$ is a coherent sheaf of degree $d$ and rank $r$. For sufficiently large $l$ the standard morphism
\begin{equation*}
\xymatrix{
 \quot\ar[r] & \text{Grass}(Y\otimes H^0(\oxtilde(l-m)),P_{E}(l))\ar[d]\\
& \pp(\bigwedge^{rl+d+(1-g)r}Y\otimes H^0(\oxtilde(l-m)))
}
\end{equation*}
is a well-defined closed immersion (note that $P_{E}(l)=rl+d+(1-g)r=h^0(E(l))$).
Let $Y_1=(Y\otimes\oxtilde(-m))_{x_1}$ and $Y_2=(Y\otimes\oxtilde(-m))_{x_2}$ be the stalks at the points $x_1$, $x_2$ and denote by $\gras\doteqdot\mbox{Gr}(Y_1\oplus Y_2,r)$ the Grassmannian that parametrizes $r$-dimensional quotients $Y_1\oplus Y_2\to R$. Giving a morphism $f:Y\ab{}\otimes\oxtilde(-m)\to L$ is equivalent to giving a morphism, that we will still call $f$, from $Y\ab{}$ to $H^0(L(m))$. Now, let $\textbf{P}\doteqdot\pp(\home(Y\ab{},H^0(L(m)))^{\vee})$ and $Z'\subset\quot\times\gras\times\textbf{P}$ denote the closed subscheme of points
\begin{equation*}
 ([\quoz:Y\otimes\oxtilde(-m)\to E],[g:Y_1\oplus Y_2\to R],[f:Y\ab{}\to H^0(L(m))])
\end{equation*}
for which the induced homomorphisms $f:Y\ab{}\otimes\oxtilde(-m)\to L$ and the morphism $g:Y_1\oplus Y_2\to R$  make the following diagrams commutative:
\begin{equation*}
\xymatrix{ Y\otimes\oxtilde(-m) \ar[d]\ar[r]^(0.65){\quoz} & E \ar[d] & \; & Y\otimes\oxtilde(-m) \ar[d]\ar[r]^(0.65){\quoz} & E \ar[d]\\
Y\ab{}\otimes\oxtilde(-m) \ar[d]_{f} \ar[r]^(0.65){\quoz\ab{}} & E\ab{} \ar[dl]^{\varphi}& \; & Y_1\otimes Y_2 \ar[r]^\quoz\ar[d]_g & E_{x_1}\oplus E_{x_2} \ar[dl]^q\\
L & & \; & R &\\
}
\end{equation*}
The immersion $\quot\hookrightarrow\pp(\bigwedge^{h^0(E(l))}Y\otimes H^0(\oxtilde(l-m)))$ and the immersion $\gras\hookrightarrow\pp(\bigwedge^r (Y_1\oplus Y_2))$ give rise to very ample line bundles $\mathcal{L}_{\quot}$ over $\quot$ and $\mathcal{L}_{\gras}$ over $\gras$. Then the group $Sl(Y)$ acts on $Z'$ and the line bundles
\begin{equation*}
\mathcal{O}_{Z'}(n_1,n_2,n_3)\doteqdot p^*_{\quot}\mathcal{L}_{\quot}^{\otimes n_1}\otimes p^*_{\gras}\mathcal{L}_{\gras}^{n_2}\otimes p^*_{\textbf{P}}\mathcal{O}_{\textbf{P}}(n_3)
\end{equation*}
where $p_{\quot}$, $p_{\gras}$ and $p_{\textbf{P}}$ are the projections from $\quot\times\gras\times\textbf{P}$ to $\quot$, $\gras$ and $\textbf{P}$ respectively, carry natural $Sl(Y)$-linearizations. In the following we choose $n_1,n_2,n_3$ such that:
\begin{equation*}
\frac{n_2}{n_1}=\frac{\polyf(l)}{\polyf(m)}-1,\qquad \frac{n_3}{n_1}=\frac{\delta\,\polyf(l)}{\polyf(m)}-\delta,
\end{equation*}
where we recall that $\polyf(l)=P_E(l)-r-a\;\delta$.
\begin{oss}\label{H-L-1-rmkss}
With this notation the semistability condition for $(E,q,\varphi)$ is equivalent to requiring that $\frac{\poly{E'}}{\rk{E'}}\leq \frac{\polyf}{r}$ for  any non-zero proper subsheaf $E'\subset E$ or $\frac{\polygen{E''}{q''}{\varphi''}}{\rk{E''}}\geq \frac{\polyf}{r}$ for any quotient sheaf $E''$.
\end{oss}
\begin{prop}
For sufficiently large $l$ the point $([\quoz],[g],[f])\in Z'$ is (semi)stable with respect to the linearization of $\mathcal{O}_{Z'}(n_1,n_2,n_3)$ if and only if the following condition holds: if $Y'$ is a nontrivial proper subspace of $Y$ and $E'\subset E$ the subsheaf generated by $Y'\otimes\oxtilde(-m)$, then
\begin{equation}\label{prop-H-L-1-eq2}
\dim Y' \; (n_1 P_E(l)+rn_2+a n_3)\; (\leq) \dim Y \; (n_1 P_{E'}(l)+n_2\qdim{E'}+a n_3\varepsilon(\rest{\varphi}{E'})),
\end{equation}
where $\qdim{E'}\doteqdot\dim q(E'_{x_1}\oplus E'_{x_2})$ and $\varphi$ is the morphism induced by $f$.
\end{prop}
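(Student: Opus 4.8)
The plan is to apply the Hilbert--Mumford numerical criterion to the $Sl(Y)$-action on $Z'$ linearized in $\mathcal{O}_{Z'}(n_1,n_2,n_3)$, computing the resulting weight factor by factor. A one-parameter subgroup $\lambda:\cc^*\to Sl(Y)$ is, up to conjugation, the same datum as a weighted filtration $0\subset Y_1\subset\dots\subset Y_t\subset Y$ with nonnegative rational weights. Since the chosen linearization is the tensor product $p^*_{\quot}\mathcal{L}_{\quot}^{\otimes n_1}\otimes p^*_{\gras}\mathcal{L}_{\gras}^{n_2}\otimes p^*_{\textbf{P}}\mathcal{O}_{\textbf{P}}(n_3)$, the Hilbert--Mumford weight of the point $([\quoz],[g],[f])$ (taken with the sign convention fixed in Section \ref{secdecoration-semistability}) splits additively as $n_1\,\mu_{\quot}(\lambda)+n_2\,\mu_{\gras}(\lambda)+n_3\,\mu_{\textbf{P}}(\lambda)$, so I can analyse the three contributions independently.

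First I would reduce from filtrations to single subspaces. Each of the three weight functions is linear in the weights of $\lambda$, and by the standard averaging argument (as in \cite{huybrechts1995framed} and \cite{schmitt2008geometric}) the numerical condition $\mu(\lambda;([\quoz],[g],[f]))\,(\geq)\,0$ for all $\lambda$ is equivalent to the same condition for all one-parameter subgroups arising from a single step $0\subset Y'\subset Y$. This is exactly what turns the criterion into the subspace form \eqref{prop-H-L-1-eq2}, with $E'\subset E$ the subsheaf generated by $Y'\otimes\oxtilde(-m)$.

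Next I would evaluate the three weights for the one-step subgroup attached to such a $Y'$. For the Quot factor, using the closed immersion $\quot\hookrightarrow\text{Grass}(Y\otimes H^0(\oxtilde(l-m)),P_E(l))$ and then into projective space, the weight is governed by the dimension of the image of $Y'\otimes H^0(\oxtilde(l-m))$ in $H^0(E(l))$; the key point is that for $l$ large this image has dimension exactly $P_{E'}(l)=h^0(E'(l))$, producing a contribution proportional to $\dim Y'\,P_E(l)-\dim Y\,P_{E'}(l)$. For the Grassmannian factor $\gras$, the weight counts how many of the generators coming from $Y'$ survive in the $r$-dimensional quotient at the nodes, giving $\dim Y'\,r-\dim Y\,\qdim{E'}$ with $\qdim{E'}=\dim q(E'_{x_1}\oplus E'_{x_2})$. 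For the factor $\textbf{P}$, the weight detects whether $f$ vanishes on $(Y')_{a,b}$, hence is controlled by $\varepsilon(\rest{\varphi}{E'})$ and yields $a\,\dim Y'-a\,\dim Y\,\varepsilon(\rest{\varphi}{E'})$. Substituting the prescribed ratios $n_2/n_1$ and $n_3/n_1$ and clearing denominators assembles these three pieces into precisely the inequality \eqref{prop-H-L-1-eq2}, with the semistable case giving $\leq$ and the stable case giving strict inequality.

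The hard part will be the uniformity in $l$. The identity ``dimension of the image equals $P_{E'}(l)$'' must hold simultaneously for every point of $Z'$ and every subspace $Y'$, and one must ensure that the lower-order terms in $l$ never reverse the direction of the (semi)stability inequality. This is handled exactly as in Simpson and Huybrechts--Lehn: boundedness of the relevant families (Proposition \ref{prhiggs} and boundedness of the family $\ssfam$ of semistable decorated bundles, see \cite{schmitt2005singular}) together with Grothendieck's and Le Potier's regularity estimates furnish a single $l$ for which all the subsheaves $E'$ occurring are $m$-regular and satisfy $h^0(E'(l))=P_{E'}(l)$, and for which the top-degree term in $l$ dictates the sign. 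A secondary technical point, comparing the subsheaf generated by $Y'$ with its saturation, is dealt with by the routine comparison of Hilbert polynomials. Once these uniform bounds are in place, the weight computation above matches \eqref{prop-H-L-1-eq2} term by term and the equivalence follows.
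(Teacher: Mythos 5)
Your proposal is correct and follows essentially the same route as the paper: apply the Hilbert--Mumford criterion with the weight splitting additively over the three factors, reduce to one-step filtrations by linearity in the weight vector, and identify the three contributions with $P_{E'}(l)$, $\qdim{E'}$ and $\varepsilon(\rest{\varphi}{E'})$ respectively. The only cosmetic difference is that the inequality \eqref{prop-H-L-1-eq2} is stated for general $n_1,n_2,n_3$, so the substitution of the ratios $n_2/n_1$ and $n_3/n_1$ is not needed at this stage (it enters only in the subsequent proposition); otherwise your term-by-term computation matches the paper's.
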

\begin{proof}
Let $\quoz:Y\otimes\oxtilde(-m)\to E$, $g:Y_1\oplus Y_2\to R$ and $f:Y\ab{}\to H^0(L(m))$ be homomorphisms representing the point $([\quoz],[g],[f])$. We put $P_E(l)=p_l$ for convenience's sake. Let $W=H^0(\oxtilde(l-m))$, $\quoz$ induces homomorphisms $\quoz':Y\otimes W\to H^0(E(l))$ and $\quoz'':\bigwedge^{p_l} (Y\otimes W)\to\bigwedge^{p_l}H^0(E(l))$. If $\{w_1,\dots,w_t\}$ is a basis for $W$ and $\{u_1,\dots,u_k\}$ is a basis for $Y$, then a basis for $\bigwedge^{p_l}(V\otimes W)$ is given by elements of the form
\begin{equation*}
 u_{IJ}=(y_{i_1}\otimes w_{j_1})\wedge\dots\wedge(y_{i_{p_l}}\otimes w_{j_{p_l}})
\end{equation*}
where $I,J$ are multi-indices satisfying $i_{h}\leq i_{h+1}$ and $j_{h}<j_{h+1}$ if $i_{h}=i_{h+1}$. Given a one-parameter subgroup $\lambda$ of $Sl(Y)$ with vector weights $\underline{\xi}=(\xi_1,\dots,\xi_k)$, then $\cc^*$ acts on $\bigwedge^{p_l}(Y\otimes W)$ by
\begin{equation*}
\lambda(t)\ldotp u_{IJ}= t^{\xi_I} u_{IJ}, \qquad \xi_I\doteqdot\sum_{i_h\in I}\xi_{i_h}.
\end{equation*}
Now let $\mu(\lambda;\quoz'')=-\min\{\xi_I\;|\;\exists I,J\;\text{with}\;\quoz''(u_{IJ})\neq0\}$ (note that $\mu(\lambda;\quoz'')=\mu(\lambda;\quoz)$ where $\mu(\lambda;\quoz)$ is defined in Section \ref{secsemista}). This number can be computed as follows. Let $\varpi$ denote the function $t\longmapsto\dim \quoz'(<u_1,\dots,u_t>\otimes W)$, then
\begin{equation*}
\mu(\lambda;\quoz'')=-\sum_{i=1}^{k}\xi_i\;(\varpi(i)-\varpi(i-1)).
\end{equation*}
We recall that a one-parameter subgroup gives a weighted filtration $(Y^{\bullet},\underline{\alpha})$ where $Y^{\bullet}:\quad 0\subset Y_1\subset\dots\subset Y_{s}\subset Y_{s+1}=Y$. We set
\begin{equation*}
\mu\ab{}^{\fr}(\lambda;f)=-\min\{a\,\gamma_i\;|\; \rest{f}{(Y_{i})\ab{}\otimes\oxtilde(-m)}\neq0\}
\end{equation*}
with
\begin{equation*}
\underline{\gamma}\equiv\underline{\gamma}(\lambda)=\sum_{i=1}^s\alpha_i(\underbrace{\dim Y_i-k,\dots,\dim Y_i-k}_{(\dim Y_i)\text{-times}},\underbrace{\dim Y_i,\dots,\dim Y_i}_{(k-\dim Y_i)\text{-times}})
\end{equation*}
(see Section \ref{secsemista} for more details).\\
Let $\tau$ be the index that realizes the minimum of, i.e., $\mu\ab{}^{\fr}(\lambda;f)=-a\gamma_{\tau}$. Finally, we define $\mu(\lambda;g)$ as in Section \ref{secsemista} and recall that a point $g:Y_1\oplus Y_2\to R$ is (semi)stable if and only if for any $Y'\subset Y$
\begin{equation*}
\frac{2\dim Y'}{\dim g(Y'_1\oplus Y'_2)}=\frac{\dim Y'_1\oplus Y'_2}{\dim g(Y'_1\oplus Y'_2)}(\leq)\frac{\dim Y_1\oplus Y_2}{\dim g(Y_1\oplus Y_2)}=\frac{2\dim Y}{r},
\end{equation*}
where we put $Y'_i\doteqdot (Y'\otimes\oxtilde(-m))_{x_i}$ for $i=1,2$.\\

Now, recalling that $\dim Y=k$, from the Hilbert-Mumford criterion (\cite{mumford1994geometric} Theorem 2.1) we have:\\

\textit{$([\quoz],[g],[f])$ is a (semi)stable point of $Z'$ if and only if for all one-parameter subgroups $\lambda$ one has
\begin{equation*}
n_1\;\mu(\lambda;\quoz'')+n_2\;\mu(\lambda;g)+n_3\;\mu\ab{}^{\fr}(\lambda;f)\;(\geq)\;0,
\end{equation*}
or equivalently,
\begin{equation}\label{prop-H-L-1-eq}
n_1\sum_{i=1}^{k}\xi_i\;(\varpi(i)-\varpi(i-1))+n_2\left( r\dim Y'-k\dim g(Y'_1\oplus Y'_2)\right)+n_3\;\gamma_{\tau}\;(\leq)\;0.
\end{equation}
}

The left hand side is a linear form of weight vectors whose coefficients are determined only by the choice of the basis. Keeping such a basis fixed for a moment, it is enough to check the inequality for the special one-parameter subgroups $\lambda^{(i)}$ giving the weight vectors
\begin{equation*}
\underline{\xi}^{(i)}=(\underbrace{i-k,\dots,i-k}_{i\text{-times}},\underbrace{i,\dots,i}_{(k-i)\text{-times}}) \qquad i=1,\dots,k-1.
\end{equation*}
Note that in this case the filtration induced by $\lambda^{(i)}$ has length one and weight vector $\underline{\alpha}=(\underline{1})$, so $\underline{\gamma}(\lambda^{(i)})=\underline{\xi}^{(i)}$.
For $\underline{\xi}^{(i)}$ the inequality \eqref{prop-H-L-1-eq} is equivalent to
\begin{equation*}
i\; (n_1p_l+n_2 r+a n_3)(\leq)k(n_1\varpi(i)+n_2\dim g(Y'_1\oplus Y'_2)+a n_3\varepsilon(i)),
\end{equation*}
where
\begin{equation*}
\varepsilon(i)=
\begin{cases}
1 \qquad \text{if } \rest{f}{((<y_1,\dots,y_i>)^{\otimes a})^{\oplus b}\otimes\oxtilde(-m)}\neq 0\\
0 \qquad \text{otherwise}.
\end{cases}
\end{equation*}
Then the following holds:\\

\textit{$([\quoz],[g],[f])$ is a (semi)stable point of $Z'$ if and only if for all non trivial proper subspaces $Y'$ of $Y$ one has
\begin{align*}
\dim Y' & \; (n_1p_l+n_2 r+an_3)\quad(\leq)\nonumber\\
& (\leq)\, \dim Y \;(n_1\dim\quoz'(Y'\otimes W)+n_2\dim g(Y'_1\oplus Y'_2)+an_3\varepsilon(Y')),
\end{align*}
where
\begin{equation*}
\varepsilon(Y')=
\begin{cases}
1 \qquad \text{if } \rest{f}{Y'\ab{}}\neq 0\\
0 \qquad \text{otherwise}.
\end{cases}
\end{equation*}
}

Let $E'$ the subbundle $\quoz(Y'\otimes\oxtilde(-m))$. In this case the decoration $\varphi:E\ab{}\to L$ vanishes when restricted to $E'$ if and only if $\rest{f}{Y'\ab{}\otimes\oxtilde(-m)}=0$. Hence $\varepsilon(Y')=\varepsilon(\rest{\varphi}{E'})$ and recalling that $P_E(l)=p_l$ and $P_{E'}(l)=\dim\quoz'(Y'\otimes W)$ we are done.
\end{proof}

Now consider a \dgpb $(E,q,\varphi)$. If $E$ is torsion-free then so are $E\ab{}$ and $\ker\varphi\subset E\ab{}$. Conversely, if $\ker\varphi$ is torsion-free, from the following exact sequence:
\begin{equation*}
0\longrightarrow\ker\varphi\longrightarrow E\ab{}\stackrel{\varphi}{\longrightarrow} L\longrightarrow 0
\end{equation*}
we obtain that, since $L$ is locally-free, $E\ab{}$ cannot have torsion either and so $E$ is torsion-free. Therefore we have that
\begin{align*}
\text{The family of \dgpb with tors} & \text{ion free kernel}=\nonumber\\
& =\text{The family of torsion free \dgpb's}
\end{align*}
and then by \cite{maruyama1976openness} Proposition 2.1 we have the following result:
\begin{lem}
If $(E_S,q_S,\varphi_S)$ is a flat family of \dgpb's parametrized by a Noetherian scheme $S$, then the subset of points $s\in S$ such that $E_s$ is torsion free is open in $S$.
\end{lem}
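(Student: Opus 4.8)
The plan is to deduce the statement from the openness theorem of Maruyama \cite{maruyama1976openness}, with the fibrewise equivalence established just above serving as the conceptual input. Recall that for a single \dgpb $(E,q,\varphi)$ the sheaf $E$ is torsion free if and only if the subsheaf $\ker\varphi\subset E\ab{}$ is torsion free. Since torsion-freeness of $E_s$ depends only on the fibre over the point $s$, this equivalence may be read off fibre by fibre, so that
\[
\{\,s\in S : E_s \text{ is torsion free}\,\}=\{\,s\in S : (\ker\varphi_S)_s \text{ is torsion free}\,\},
\]
and it is enough to prove that either of these loci is open.

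Concretely, I would form the auxiliary family $\mathcal K\doteqdot\ker\varphi_S$ out of the decoration $\varphi_S\colon(E_S)\ab{}\to\pi_{\xtilde}^*L$. The sheaf $(E_S)\ab{}=((E_S)^{\otimes a})^{\oplus b}$ is obtained from the $S$-flat sheaf $E_S$ by tensor powers and direct sums, hence is again $S$-flat and coherent on $\xtilde\times S$. Writing $\mathcal Q\doteqdot\operatorname{im}\varphi_S\subset\pi_{\xtilde}^*L$, the short exact sequence $0\to\mathcal K\to (E_S)\ab{}\to\mathcal Q\to 0$ shows, once $\mathcal Q$ is known to be $S$-flat, that $\mathcal K$ is $S$-flat and that $\operatorname{Tor}_1^{S}(\mathcal Q,k(s))=0$, whence the formation of $\mathcal K$ commutes with restriction to fibres and $\mathcal K_s=\ker\varphi_s$. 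The projection $\xtilde\times S\to S$ being projective, Maruyama's Proposition~2.1 then applies to the $S$-flat coherent sheaf $\mathcal K$ (equally well to $E_S$ itself) and yields that the torsion-free locus is open in $S$; together with the fibrewise identification above this is exactly the claim.

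The main obstacle is the $S$-flatness of $\mathcal Q=\operatorname{im}\varphi_S$, equivalently of $\mathcal K=\ker\varphi_S$, together with its compatibility with base change, since the kernel of a morphism of $S$-flat sheaves need not be flat in general. Here I would use that $\mathcal Q$ is a subsheaf of the locally free rank-one sheaf $\pi_{\xtilde}^*L$, so its fibres are torsion free, and invoke generic flatness to control it; the key structural reason the argument works is precisely that the target $L$ of the decoration is locally free, which is what forced the equivalence ``$E$ torsion free $\Leftrightarrow\ker\varphi$ torsion free'' in the first place. If one prefers to avoid the kernel altogether, one applies Maruyama's proposition directly to the $S$-flat coherent family $E_S$, and the preceding equivalence then only records that this is the geometrically relevant locus; in either formulation the entire content of the lemma is the cited openness statement.
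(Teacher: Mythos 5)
Your proposal is correct and in its final form coincides with the paper's own argument: the paper simply applies Maruyama's Proposition~2.1 to the $S$-flat coherent family $E_S$ itself, with the fibrewise equivalence ``$E$ torsion free $\Leftrightarrow\ker\varphi$ torsion free'' recorded only as context, not as an ingredient of the openness proof. Your detour through $\mathcal K=\ker\varphi_S$ is unnecessary and would require the flatness and base-change issues you flag (which generic flatness alone does not settle), so the clean route is the direct application to $E_S$ that you give at the end.
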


Let $U\subset Z'$ the open subscheme consisting of those points that represent torsion free \dgpb, and let $Z\doteqdot \overline{U}$ be the closure in $Z'$ of $U$.

\begin{prop}
For sufficiently large $l$, a point $([\quoz],[g],[f])\in Z$ is (semi)stable with respect to the $Sl(Y)$ action on $Z$ is and only if the corresponding decorated parabolic bundle $(E,q,\varphi)$ is (semi)stable and $\quoz$ induces an isomorphism $Y\to H^0(E(m))$.
\end{prop}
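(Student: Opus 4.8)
The plan is to follow the Geometric Invariant Theory argument of Huybrechts and Lehn for framed modules \cite{huybrechts1995framed}, in the form adapted to decorated bundles by Schmitt \cite{schmitt2005singular,schmitt2000universal}. The starting point is the numerical criterion \eqref{prop-H-L-1-eq2} of the preceding Proposition. Dividing by $n_1$ and inserting the chosen ratios $n_2/n_1=\polyf(l)/\polyf(m)-1$ and $n_3/n_1=\delta(\polyf(l)/\polyf(m)-1)$, together with the identity $\polyf(m)+r+a\delta=P_E(m)=\dim Y$, the coefficient of $\dim Y'$ on the left becomes $\polyf(l)\,\dim Y/\polyf(m)$, so that for a subspace $Y'\subset Y$ generating $E'\subset E$ the inequality turns into a comparison between $\dim Y'$ and the reduced Hilbert polynomial $\poly{E'}(l)=P_{E'}(l)-\qdim{E'}-a\delta\,\varepsilon(\rest{\varphi}{E'})$. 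The aim is to show that, for $l$ large, this holds for every $Y'$ exactly when $\poly{E'}/\rk{E'}\,(\le)\,\polyf/r$ holds for every subsheaf $E'\subset E$ (Remark \ref{H-L-1-rmkss}) and $\quoz$ induces an isomorphism $Y\to H^0(E(m))$.

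First I would treat the implication from GIT-semistability to bundle semistability and the isomorphism. The kernel argument yields injectivity of the natural map $Y\to H^0(E(m))$: were its kernel $Y'$ nonzero, the generated subsheaf $E'$ would vanish, whence $P_{E'}(l)=\qdim{E'}=\varepsilon(\rest{\varphi}{E'})=0$ and the rewritten inequality would read $\dim Y'\cdot(\text{positive})\,(\le)\,0$, a contradiction. To obtain semistability of $E$---and hence $h^1(E(m))=0$, which combined with $\dim Y=P_E(m)$ upgrades the injection to the isomorphism---I would argue by contradiction, feeding the global sections of a maximal destabilizing subsheaf into \eqref{prop-H-L-1-eq2} and using the Le Potier--Simpson estimate to produce a violation. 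Openness of the torsion-free locus (the Lemma preceding the definition of $U$) then forces the semistable points of $Z=\overline U$ to lie in $U$, so that $E$ is torsion free.

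For the converse I would show that bundle semistability together with the isomorphism implies \eqref{prop-H-L-1-eq2} for every subspace $Y'\subset Y$. Writing $E'$ for the subsheaf generated by $Y'$, it suffices to treat the extremal case in which $E'$ is saturated; one always has $\dim Y'\le h^0(E'(m))$, and the Le Potier--Simpson estimate---applicable because the boundedness Proposition supplies the uniform bound $\mu(E')\le d/r+C$---controls $h^0(E'(m))$ in terms of $\rk{E'}$ and $\mu_{\max}(E')$. Feeding this into the hypothesis $\poly{E'}/\rk{E'}\,(\le)\,\polyf/r$ then yields the numerical inequality once $l$ is large enough. The decoration enters only through $\varepsilon$, and the identification $\varepsilon(Y')=\varepsilon(\rest{\varphi}{E'})$ already established in the preceding Proposition disposes of this term.

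The main obstacle is the uniform comparison, over the entire bounded family and with a single threshold $l$, between the linear-algebraic quantity $\dim Y'$ and the cohomological quantity $h^0(E'(m))$, equivalently the reduced Hilbert polynomial $\poly{E'}$. This is exactly where the Le Potier--Simpson estimates and the slope bound of the boundedness Proposition are indispensable, and where the ``sufficiently large $l$'' quantifier is consumed; the remaining passages between $P(\filtrazione)$, $\qdim{E'}$ and $\varepsilon$ are the routine bookkeeping already encoded in \eqref{prop-H-L-1-eq2} and Remark \ref{H-L-1-rmkss}.
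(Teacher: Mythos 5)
Your proposal is correct and follows the same Huybrechts--Lehn strategy as the paper: the kernel argument giving injectivity of $Y\to H^0(E(m))$ (hence an isomorphism, since $\dim Y=h^0(E(m))$), the substitution of the ratios $n_2/n_1$ and $n_3/n_1$ into \eqref{prop-H-L-1-eq2}, and the identification $\varepsilon(Y')=\varepsilon(\rest{\varphi}{E'})$ are exactly the paper's steps. Where you diverge is in how the resulting numerical inequality is matched against semistability of $(E,q,\varphi)$. The paper sets $Y'=Y\cap H^0(E'(m))$, uses $\dim Y'=P_{E'}(m)$ to reduce \eqref{prop-H-L-2-eq} to $\polyf(l)\,\poly{E'}(m)(\leq)\polyf(m)\,\poly{E'}(l)$, and then compares coefficients of $l$; the converse is dispatched by asserting that the same chain of equivalences runs backwards. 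You instead run the forward direction by contradiction through a maximal destabilizing subsheaf, and for the converse you bound $\dim Y'\leq h^0(E'(m))$ for an \emph{arbitrary} subspace $Y'$ via the Le Potier--Simpson estimate together with the slope bound from the boundedness proposition. Your route is slightly longer but more explicit about the one genuinely delicate point: the paper's equality $\dim Y'=P_{E'}(m)=h^0(E'(m))$, and the reversibility of its equivalences, silently require $h^1(E'(m))=0$ and a uniform control of $h^0$ for subsheaves generated by arbitrary $Y'$, which is precisely what the Le Potier--Simpson estimate supplies and where the ``sufficiently large'' quantifiers are consumed. Both arguments are standard and both work; yours fills in the uniformity that the paper leaves implicit.
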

\begin{proof}
Observe that if $([\quoz],[g],[f])$ is a semistable point the homomorphism $Y\to H^0(E(m))$ must be injective. Indeed if $Y'$ is its kernel than $\quoz'(Y'\otimes W)=0$, $\dim g(Y'_1\oplus Y'_2)=0$ and $\varepsilon(Y')=0$, so the previous proposition shows that $\dim(Y')=0$. Hence, since $\dim Y=h^0(E(m))$, the morphism $Y\to H^0(E(m))$ is an isomorphism.\\
Substituting 
\begin{equation*}
\frac{n_2}{n_1}=\frac{\polyf(l)}{\polyf(m)}-1\qquad\frac{n_3}{n_1}=\frac{\delta\,\polyf(l)}{\polyf(m)}-\delta
\end{equation*}
in the inequality \eqref{prop-H-L-1-eq2} and setting $Y'\doteqdot Y\cap H^0(E'(m))$ for any non-trivial proper subbundle $E'$ of $E$ we can rewrite the stability criterion \eqref{prop-H-L-1-eq2} as follows:\\

\textit{$([\quoz],[g],[f])$ is a (semi)stable point of $Z$ if and only if for all non trivial proper subsheaf $E'$ of $E$ with the induced decoration the following holds:
\begin{align}\label{prop-H-L-2-eq}
& \dim Y'\,\polyf(l)\; \left( 1+\frac{a\delta+r}{\polyf(m)}\right)\quad(\leq)\nonumber\\
& (\leq) \dim Y  \left( \poly{E'}(l)+\left[a\delta\varepsilon(\rest{\varphi}{E'})+\qdim{E'}\right]\frac{\polyf(l)}{\polyf(m)} \right)
\end{align}
}

Recalling that by definition $P_E'(m)=h^0(E'(m))$, $P_E(m)=\polyf(m)+a\delta+r$ and that
\begin{equation*}
\dim Y'=\poly{E'}(m)+a\delta\varepsilon(\rest{\varphi}{E'})+\qdim{E'},
\end{equation*}
the previous inequality is equivalent to the following:
\begin{align*}
&\left(\poly{E'}(m)+ a\delta\varepsilon(\rest{\varphi}{E'})+\qdim{E'}\right)\polyf(l)\left(\frac{P_E(m)}{\polyf(m)}\right)(\leq)\nonumber\\
& P_E(m)\left(\frac{\polyf(m)\poly{E'}(l)+\left[a\delta\varepsilon(\rest{\varphi}{E'})+\qdim{E'}\right]\polyf(l)}{\polyf(m)}\right)
\end{align*}
and after some simplifications we get
\begin{equation*}
\polyf(l)\;\poly{E'}(m)(\leq)\polyf(m)\;\poly{E'}(l).
\end{equation*}
Since the inequality \eqref{prop-H-L-2-eq} holds for every $l$ large enough the same inequality holds also for the coefficients of $l$. Then we derive the inequality:
\begin{equation}\label{prop-H-L-2-eq2}
r\poly{E'}(m)(\leq)\rk{E'}\polyf(m),
\end{equation}
and then $E$ is (semi)stable.\\

Conversely if $(E,q,\varphi)$ is (semi)stable then for any non trivial proper subsheaf $E'$ of rank $r'$ one has $\poly{E'}(m)\leq r'\;\polyf(m)/r$. The previous inequality is equivalent to \eqref{prop-H-L-2-eq}, and we are done.
\end{proof}

\begin{teo}
There exists a projective scheme $\unomodulispacessfr$ and a morphism $\pi:Z^{ss}\to\unomodulispacessfr$ which is a good quotient for the action of $Sl(Y)$ on $Z^{ss}$. Moreover there is an open subscheme $\unomodulispacesfr\subset\unomodulispacessfr$ such that $Z^s=\pi^{-1}(\unomodulispacesfr)$ and $\pi:Z^s\to \unomodulispacesfr$ is a geometric quotient. Two points $([\quoz_1],[g_1],[f_1])$ and $([\quoz_2],[g_2],[f_2])$ are mapped to the same point in $\unomodulispacessfr$ if and only if the corresponding decorated bundles are $S$-equivalent. 
\end{teo}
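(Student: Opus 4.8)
The plan is to realize $\unomodulispacessfr$ as a GIT quotient of $Z$ and then extract the three assertions from Mumford's theory together with the characterization of (semi)stable points established in the preceding proposition. First I would record that $Z=\overline{U}$ is a closed subscheme of $\quot\times\gras\times\textbf{P}$; since $\quot$ is a Quot scheme over the projective curve $\xtilde$ (hence projective), $\gras$ is a Grassmannian and $\textbf{P}$ a projective space, the scheme $Z$ is projective. The group $Sl(Y)$ is reductive, and for the chosen positive integers $n_1,n_2,n_3$ the line bundle $\mathcal{O}_Z(n_1,n_2,n_3)$, being the restriction of an external tensor product of ample bundles, is an ample $Sl(Y)$-linearized line bundle. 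Mumford's fundamental theorem of GIT (\cite{mumford1994geometric}) then provides a projective good quotient $\pi\colon Z^{ss}\to Z^{ss}/\!/Sl(Y)\doteqdot\unomodulispacessfr$ together with an open subscheme $\unomodulispacesfr\subset\unomodulispacessfr$ satisfying $Z^s=\pi^{-1}(\unomodulispacesfr)$ on which $\pi$ restricts to a geometric quotient; this is exactly the content of the first two sentences.

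The modular interpretation is immediate from the preceding proposition: for $l\gg0$ a point $([\quoz],[g],[f])\in Z$ is GIT-(semi)stable if and only if the associated decorated parabolic bundle $(E,q,\varphi)$ is (semi)stable and $\quoz$ induces an isomorphism $Y\to H^0(E(m))$. Consequently the closed points of $\unomodulispacessfr$ are in bijection with closed $Sl(Y)$-orbits of such data, and on the stable locus the geometric quotient identifies orbits with isomorphism classes of stable decorated parabolic bundles.

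It remains to prove the $S$-equivalence assertion, which is the main point. By the defining property of a good quotient, two semistable points have the same image under $\pi$ precisely when the closures of their $Sl(Y)$-orbits meet inside $Z^{ss}$. The strategy is to show that the unique closed orbit contained in the closure of the orbit of a semistable $([\quoz],[g],[f])$ represents exactly $(\gr(E),\gr(q),\gr(\varphi))$. To this end I would analyze the one-parameter subgroups $\lambda$ of $Sl(Y)$ whose Hilbert--Mumford weight vanishes on the point. By the weight computation carried out above, such a $\lambda$ corresponds (via $\quoz$) to a weighted filtration $(\filtrazione)$ of $E$ for which $P(\filtrazione)+\delta\mu_{\rho\ab{}}(\filtrazione;\varphi)=0$; by Proposition \ref{prop-additivity} and the subsequent remark this forces the filtration to be by subbundles all of the same $\mathbf{fr}$-slope, i.e. a Jordan--H\"older filtration in the sense of Section \ref{sec-j-h-fil}. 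The limit $\lim_{z\to0}\lambda(z)\cdot([\quoz],[g],[f])$ then represents the associated graded decorated bundle, and iterating over the factors of a Jordan--H\"older filtration drives the orbit to a point whose decorated bundle is $(\gr(E),\gr(q),\gr(\varphi))$ and is polystable, so that its orbit is closed in $Z^{ss}$.

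Finally, since we have already shown that $(\gr(E),\gr(q),\gr(\varphi))$ does not depend on the chosen Jordan--H\"older filtration, this closed orbit is uniquely determined by $(E,q,\varphi)$. Therefore the orbit closures of two semistable points meet if and only if their associated graded decorated bundles are isomorphic, that is, if and only if the bundles are $S$-equivalent. The hard part of the argument is the degeneration step: verifying that the GIT limit under a weight-zero one-parameter subgroup is precisely the Huybrechts--Lehn associated graded object (\cite{huybrechts1995framed}), keeping careful track of how the decoration $\varphi$ (which vanishes on the complementary summand $W$ of $\gr(E)\ab{}$) and the generalized parabolic structure $q$ behave under passage to the graded bundle.
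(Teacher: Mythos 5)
Your proposal is correct and follows essentially the same route as the paper, which simply invokes Theorem 3.3 of \cite{huybrechts1995framed} ``with the necessary modifications''; you have fleshed out precisely the Huybrechts--Lehn argument (projectivity of $Z$, ampleness of the linearization, Mumford's theorem, the modular reading of the (semi)stability criterion, and the identification of closed-orbit limits with the Jordan--H\"older graded object). The degeneration step you flag as the hard part is exactly the content delegated to \cite{huybrechts1995framed} in the paper's one-line proof, so nothing essential is missing relative to the paper's own treatment.
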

\begin{proof}
The proof, with the necessary modifications, is quite similar to that one of Theorem 3.3 in \cite{huybrechts1995framed}.
\end{proof}

\begin{teo}\label{teo-sp-moduli}
There exists a projective scheme $\unomodulispacessfr_{d,r,a,b,L}$ with is a coarse moduli space for the functor $\unomodulissfr_{d,r,a,b,L}$ which associates to a scheme $T$ the set of isomorphism classes of flat families of semistable decorated generalized parabolic vector bundles defined over $T$ of type $(d,r,a,b,L)$. Moreover, there is a open subscheme $\unomodulispacesfr_{d,r,a,b,L}$ which is a fine moduli space for the subfunctor $\unomodulisfr_{d,r,a,b,L}$ of families of stable \dgpb's. The closed points of $\unomodulispacessfr_{d,r,a,b,L}$ represent semistable \dgpb's up $S$-equivalence.
\end{teo}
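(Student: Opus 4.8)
The plan is to take the good GIT quotient $\pi\colon Z^{ss}\to\unomodulispacessfr$ produced in the previous theorem as our candidate and to promote it to a coarse moduli space by exhibiting the appropriate natural transformation and verifying its universal property. Set $\unomodulispacessfr_{d,r,a,b,L}\doteqdot\unomodulispacessfr$. Recall that $Z$ carries a \emph{tautological family}: the universal quotient $\quoz_{\textnormal{univ}}\colon Y\otimes\oxtilde(-m)\to E_{\textnormal{univ}}$ on $Z\times\xtilde$ together with the tautological parabolic structure and decoration pulled back from $\gras$ and $\textbf{P}$. By the preceding proposition a point of $Z$ is GIT-(semi)stable exactly when the decorated bundle it represents is (semi)stable and $\quoz$ induces an isomorphism $Y\xrightarrow{\sim}H^0(E(m))$; thus over $Z^{ss}$ the tautological data form a family of semistable \dgpb's.

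First I would construct the classifying natural transformation $\unomodulissfr_{d,r,a,b,L}\to\home(-,\unomodulispacessfr)$. Given a flat family $(E_T,q_T,\varphi_T)$ of semistable \dgpb's over a scheme $T$, boundedness of the semistable family $\ssfam$ (\cite{schmitt2005singular} Section 3) lets us fix a single $m$, independent of $T$, for which $\pi_{T*}\bigl(E_T(m)\bigr)$ is locally free of rank $\dim Y=\polyf(m)+a\delta+r$ and its formation commutes with base change. Choosing, on the members $T_i$ of an open cover trivializing this bundle, an isomorphism $Y\otimes\mathcal{O}_{T_i}\xrightarrow{\sim}\pi_{T_i*}(E_{T_i}(m))$, we obtain surjections $Y\otimes\oxtilde(-m)\to E_{T_i}$ and hence classifying morphisms $T_i\to Z$ which, by the stability comparison, factor through $Z^{ss}$. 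Two trivializations differ by a section of $Gl(Y)$, so the composites $T_i\to Z^{ss}\xrightarrow{\pi}\unomodulispacessfr$ do not depend on the choice and agree on overlaps, gluing to a morphism $T\to\unomodulispacessfr$; this assignment is functorial in $T$.

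Next I would check that $\unomodulispacessfr$ co-represents the functor. Since $\pi$ is a good, hence categorical, quotient, any natural transformation $\unomodulissfr_{d,r,a,b,L}\to\home(-,N)$ evaluated on the tautological family yields an $Sl(Y)$-invariant morphism $Z^{ss}\to N$, which factors uniquely through $\pi$ and so produces the unique map $\unomodulispacessfr\to N$ demanded of a coarse space. On closed points the previous theorem already identifies the fibres of $\pi$ with $S$-equivalence classes through Lemma \ref{lem-schur} and the graded object $\gr(E)$, so closed points of $\unomodulispacessfr_{d,r,a,b,L}$ are exactly semistable \dgpb's up to $S$-equivalence. For the stable locus one upgrades this to a fine moduli space: a stable decorated bundle has only scalar automorphisms fixing $\varphi$ (the $a$-th roots of unity), and these act through the centre of $Gl(Y)$, so the induced $PGl(Y)$-action on $Z^s$ is free, $\pi\colon Z^s\to\unomodulispacesfr$ is a principal bundle, and the tautological family descends by Kempf's descent lemma to a universal family on $\unomodulispacesfr_{d,r,a,b,L}\times\xtilde$, representing $\unomodulisfr_{d,r,a,b,L}$.

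The hard part will be this descent. One must check that the $Sl(Y)$-linearization of the tautological decorated bundle restricts to the trivial character on the stabilizer of each stable point; since that stabilizer consists of central scalars $\lambda\,\textnormal{id}_Y$ with $\lambda^a=1$ (the automorphisms of $(E,q,\varphi)$ rescaling $\varphi$), this is precisely the content of Kempf's descent criterion and forces an appropriate choice of the weights $n_1,n_2,n_3$ twisting the linearization, exactly as for framed modules in \cite{huybrechts1995framed}. Once this compatibility is arranged, the remaining verifications — functoriality of the classifying map, flatness and base change for $\pi_{T*}(E_T(m))$, and the uniqueness in the co-representation property — are routine, and the argument concludes by following \cite{huybrechts1995framed} with the evident modifications.
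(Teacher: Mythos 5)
Your proposal follows essentially the same route as the paper: both construct the classifying map by pushing forward the twisted family to get the locally free sheaf $p_{T*}(E_T(m))$, locally framing it to obtain morphisms to $Z^{ss}$, composing with the good quotient $\pi$ from the preceding theorem, and then concluding via the argument of Theorem 0.1 of \cite{huybrechts1995framed}. The paper simply defers the remaining verifications (co-representation, $S$-equivalence on closed points, and the fine moduli structure on the stable locus) to that reference, whereas you spell them out; the content is the same.
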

\begin{proof}
Let $(E_T,q_T,\varphi_T)$ be a flat family of \dgpb parametrized by a scheme $T$ and $m$ the number fixed before. Then $\mathcal{Y}\doteqdot p_{T*}(E_T\otimes P_X^*\oxtilde(m))$ is a locally free sheaf of rank $P_E(m)$ on $T$ and $p_T^*\mathcal{Y}\to E_T$ is surjective by Proposition 8.5 and 8.8 Chapter 3 of \cite{hartshorne1977algebraic}. Moreover the parabolic structure $q_T$ induces a morphism
\begin{equation*}
g_T:\overline{\pi}_{T*}\left((p_T^*\mathcal{Y})_{\{x_1\}\times T}\oplus (p_T^*\mathcal{Y})_{\{x_2\}\times T}\right)\longrightarrow R_T
\end{equation*}
and the decoration $\varphi_T$ induces a homomorphism
\begin{equation*}
f_T:\mathcal{Y}\ab{}\longrightarrow\mathcal{O}_T\otimes H^0(L(m)).
\end{equation*}
From now on the proof is the same as in Theorem 0.1 \cite{huybrechts1995framed}.
\end{proof}

\thanks{\textit{Acknowledgement}. The authors would like to thank Professor Ugo Bruzzo, for several discussions and explanations, and Professor Usha N. Bhosle for her interesting suggestions.}

                                    %
\bibliography{Biblio-des.bib}
\bibliographystyle{plain}
\end{document}